\newcolumntype{x}[1]{>{\centering\arraybackslash\hspace{0pt}}p{#1}}
\theoremstyle{definition}
\newtheorem{theorem}{Theorem}[section]
\newtheorem{definition}[theorem]{{{Definition}}}
\newtheorem{example}[theorem]{{{Example}}}
\newtheorem{notation}[theorem]{{{Notation}}}
\newtheorem{remark}[theorem]{{{Remark}}}
\newtheorem{corollary}[theorem]{{{Corollary}}}
\newtheorem{proposition}[theorem]{{{Proposition}}}
\newtheorem{lemma}[theorem]{{{Lemma}}}
\newtheorem{construction}{{{Construction}}}
\newcommand{\numberset}{\mathbb}
\newcommand{\N}{\numberset{N}}
\newcommand{\E}{\numberset{E}}
\newcommand{\F}{\numberset{F}}
\newcommand{\mS}{\mathcal{S}}
\newcommand{\mT}{\mathcal{T}}
\newcommand{\mC}{\mathcal{C}}
\newcommand{\mL}{\mathcal{L}}
\newcommand{\mA}{\mathcal{A}}
\newcommand{\mB}{\mathcal{B}}
\newcommand{\mM}{\mathcal{M}}
\newcommand{\M}{\mathcal{M}}
\newcommand{\mD}{\mathcal{D}}
\newcommand{\sH}{\sigma}
\newcommand{\Fq}{\F_q}
\DeclareMathOperator{\GL}{GL}
\DeclareMathOperator{\Var}{Var}
\DeclareMathOperator{\PG}{PG}
\newcommand{\wH}{\omega}
\numberwithin{equation}{section}
\title{\textbf{Three Combinatorial Perspectives on Minimal Codes}}
\author[1]{Gianira N. Alfarano\thanks{Gianira N. Alfarano  was supported by the Swiss National Science Foundation through grant no. 188430.}}
\affil[1]{Institute of Mathematics, University of Zurich, Switzerland}
\author[2]{Martino Borello}
\affil[2]{Universit\'e Paris 8, Laboratoire de G\'eom\'etrie, Analyse et Applications, LAGA,
Universit\'e Sorbonne Paris Nord, CNRS, UMR 7539, France}
\author[3]{Alessandro Neri\thanks{Alessandro Neri was supported by the Swiss National Science Foundation through grant no. 187711.}}
\affil[3]{Institute for Communication Engineering, Technical University of Munich, Germany}
\author[4]{Alberto Ravagnani}
\affil[4]{Department of Mathematics and Computer Science, Eindhoven University of Technology, the Netherlands}
\date{}
\begin{document}

\maketitle

\begin{abstract}
  We develop three approaches of combinatorial flavour to study the structure of minimal codes and cutting blocking sets in finite geometry,
  each of which has a particular application.
  The first approach uses techniques from algebraic combinatorics, 
  describing the supports in a linear code via the Alon-F\"uredi Theorem and the Combinatorial Nullstellensatz. 
  The second approach combines methods from coding theory and statistics to compare the mean and variance of the nonzero weights in a minimal code. Finally, the third approach
    regards minimal codes as cutting blocking sets and studies these using the theory of spreads in finite geometry.
        Applying and combining these approaches with each other, we derive several new  bounds and constraints on the parameters of minimal codes.
        Moreover, we obtain two new constructions of cutting blocking sets of small cardinality in finite projective spaces. In turn, these allow us to give explicit constructions
        of minimal codes having short length for the given field and dimension. 
 \end{abstract}

\ 

\bigskip

\section*{Introduction}

In a linear code, a codeword is  
\textit{minimal} if its support does not contain the support of any codeword other than its scalar multiples.
A code is \textit{minimal} if  its codewords are all minimal.

Minimal codewords in linear codes were originally
studied in connection with decoding algorithms~\cite{MR551274} and have been used by Massey~\cite{Massey} to determine the access structure in his code-based secret sharing scheme. However, describing the minimal codewords of a linear code is in general a difficult problem, even for highly structured families of codes.

General properties of the minimal codewords of a code are studied in \cite{MR1664103}, where 
a sufficient condition for a code to be minimal is presented (often called
the \textit{Ashikhmin-Barg condition}). 

The latter shows that a linear code in which the minimum and maximum weight are close enough to each other is necessarily minimal. Recently, estimates for the number of minimal codewords in a given code have been also found; see \cite{kiermaier2019minimum}.

In the last decade, minimal codes have been the subject of intense mathematical research,
yet their structural properties are far from being understood.
First results on minimal codes were presented in \cite{chabanne2013towards}, where the main motivation arises from secure two-party computation. 
Moreover, in the same paper an upper bound on the rate of a minimal code is established, which was recently improved in \cite{alfarano2019geometric}.
Other bounds on the minimum and maximum weight of minimal codes can be found in \cite{MR3163591}.

Various explicit constructions of minimal codes relying on
the aforementioned Ashikhmin-Barg condition are known; see~\cite{MR2235283, MR3352515} 
among many others. 
Constructions that exploit in other ways the minimal structure of the code are based, for example, 
on functions over finite fields \cite{sihem1,sihem2,bartoli2019minimalLin, bonini2020minimal}. A geometric approach was proposed in \cite{alfarano2019geometric,tang2019full,lu2019parameters}, where minimal codes are characterized as cutting blocking sets.

A remarkable property of minimal codes is that they form an asymptotically good family~\cite{MR3163591,alfarano2019geometric}. Since the proofs of~\cite{MR3163591,alfarano2019geometric} are nonconstructive,
this naturally poses the problem of \textit{explicitly} constructing families of minimal codes of short length for a given dimension, which is equivalent to constructing small cutting blocking set in a given projective space. Problems of this type are very natural and yet wide open challenges in the realm of extremal combinatorial structures; see e.g. \cite{blocking,ball1996multiple, barat2004multiple}. An important contribution in this direction 
is \cite{fancsali2014lines}, where the authors construct 
small cutting blocking sets in $\PG(k-1,q)$, under the assumption that the characteristic of the field is strictly greater than $k-1$ and the field size is at least $2k-3$. \label{disc} 
Because 
of the constraints imposed on the field size,
the construction of \cite{fancsali2014lines} is 
of limited applicability in coding theory and does not address 
the problem of constructing asymptotically good families of minimal codes (where $q$ is fixed and $k$ tends to infinity together with the code length).
More recently, a construction of cutting blocking sets in $\PG(3,q)$ and $\PG(5,q)$, which are smaller that the previously known ones, has been given in \cite{bartoli2020cutting}. This construction produces  minimal codes of dimension respectively 4 and 6 over a finite field of arbitrary size.

\medskip

\paragraph{Our contribution.} \, In this paper, we propose three different approaches of strong combinatorial flavour to the study of minimal codes, each of which has a particular application. Most methods apply more generally to arbitrary linear codes, but give the best and most explicit results when combined with the minimality property
of the underlying code.

The idea behind the first approach is to associate to a code
a multivariate polynomial, which we call the \textit{support polynomial}. This allows us to capture the combinatorics of the nonzero codewords of a code in an algebraic fashion, characterizing the 
inclusion relations among supports 
as the nonvanishing of a polynomial of bounded degree.  We then study the support polynomial using tools from
algebraic combinatorics, most notably the Alon-F\"uredi Theorem.
As an application of this method, we obtain new lower bounds for both the minimum distance and the length of a minimal code.
This improves on known results and excludes the existence of minimal codes for several new parameter sets.

The second approach uses instead ideas from statistics. More precisely, we regard the weight of a nonzero codeword as a discrete random variable and use Pless' equations, along with classical inequalities, to compare its mean and variance. All of this establishes inequalities between the maximum and minimum weight in a linear code, which are sharp for certain code families.
In turn, 
these yield a new upper bound for the minimum distance of a minimal code and exclude the existence of such codes for yet other parameter sets.

Finally, the third approach is based 
on the correspondence between minimal codes and cutting blocking sets in finite geometry.
We first reduce the problem of constructing short minimal codes to that of constructing cutting blocking sets of small cardinality. Then we show how to use the theory of \textit{spreads} in projective spaces to 
obtain cutting blocking sets whose parameters can be  computed explicitly.
The applications of this geometric approach are twofold: On the one hand, we obtain new explicit constructions of short minimal codes;
on the other hand, we establish a recursive upper bound for the least length of a minimal code over $\F_q$ having prescribed dimension.

For convenience of the reader we conclude the Introduction by listing the main contributions made by this paper, pointing to the corresponding statements.

\begin{itemize}\setlength\itemsep{0.3cm}
\item[---] As an application of methods from algebraic combinatorics, in particular the Alon-F\"uredi Theorem and the Combinatorial Nullstellensatz:

\vspace{-0.3cm}

\begin{enumerate}[leftmargin=1.3cm]\setlength\itemsep{0cm}
    \item a lower bound on the minimum distance of a minimal code (Theorem~\ref{thm:lower_bound_old_conjecture});
    \item a structural result on the maximal codewords in a linear code (Theorem~\ref{thm:codeword_support_intersection});
    \item a lower bound on the block length of a minimal code (Theorem~\ref{thm:lower_bound_length}).
 \end{enumerate}  
 
 \item[---] Combining ideas from coding theory and statistics with the algebraic combinatorial approach outlined above:
 
 \vspace{-0.3cm}
    
\begin{enumerate}[leftmargin=1.3cm]\setlength\itemsep{0cm}\setcounter{enumi}{3}
    \item an upper bound on the minimum distance of a minimal code and a constraint on its parameters
    (Corollary~\ref{coro:lbd});
    
    \item a result connecting the relative difference between maximum and minimum weights in a  linear code with its block length (Corollary~\ref{thm:ngrows}).
\end{enumerate}
    
\item[---] Using methods from projective geometry, most notably the theory of spreads:

\vspace{-0.3cm}
    
\begin{enumerate}[leftmargin=1.3cm]\setlength\itemsep{0cm} \setcounter{enumi}{5}
    \item a construction of cutting blocking sets from spreads in finite geometry and of the corresponding minimal codes (Theorems~\ref{thm:ConstructionPlanarSpreads} and~\ref{thm:ConstructionGeneralSpreads});
    \item an inductive construction of cutting blocking sets of small cardinality and of the corresponding minimal codes
    (Proposition~\ref{prop:inductiveConstruction} and Theorem~\ref{thm:inductiveCardinality});
    \item two new general constructions of short minimal codes (Constructions~\ref{construction_even} and~\ref{construction_baer}).
\end{enumerate}
\end{itemize}

\smallskip

\paragraph{Outline.} \  The paper is overall organized into four sections. Section~\ref{sec:1} contains the preliminaries on minimal codes and illustrates their connection with cutting blocking sets. Each of the  remaining three sections is devoted to a different approach to minimal codes, using algebraic combinatorics (Section~\ref{sec:2}), statistics (Section~\ref{sec:3}), and finite geometry (Section~\ref{sec:4}).

\bigskip

\bigskip

\bigskip

\section{Preliminaries}
\label{sec:1}

In this section we establish the terminology for the remainder of the paper 
and state some preliminary results on the parameters of minimal codes.
These will be applied in several instances in the sequel. All codes considered in this work are linear.

\begin{notation}
Throughout this paper, $q$ is a prime power, $\F_q$ is the finite field with $q$ elements, and $n$, $k$ are integers with $n \ge k \ge 1$. For $i \in \N =\{0,1,2, \ldots\}$ we let $[i]:=\{j \in \N \, : \, 1 \le j \le i\}$. We only consider row-vectors and for any  matrix $M\in \F^{a \times b}$ we denote by $\mathrm{rowsp}(M)$ the rowspace of $M$ over $\F$, that is the $\F$-subspace of $\F^b$ generated by the rows of $M$. Finally, for $i\in\N_{\ge 1}$ we denote by $e_i$ the $i$-th standard basis vector. 
\end{notation}

\subsection{Minimal Codes}

In this short subsection we define minimal codes and briefly survey some of their main properties. We will use them repeatedly throughout the paper.

\begin{definition}
The (\textbf{Hamming}) \textbf{support} of a vector 
$v \in \F_q^n$ is 
$\sH(v)=\{i \mid v_i \neq 0\} \subseteq [n]$
and its (\textbf{Hamming}) \textbf{weight} is $\wH(v)=|\sH(v)|$.

An $[n,k]_q$ \textbf{code} is a nonzero $\F_q$-linear subspace $\mC \subseteq \F_q^n$ of dimension $k$. Its elements are called \textbf{codewords}.
The \textbf{minimum distance} of
$\mC$ is the integer $d(\mC)=\min\{\wH(c) \mid c \in \mC, \, c \neq 0\}$ and its \textbf{maximum weight} is $\max\{\wH(c) \mid c \in \mC\}$. If $d=d(\mC)$ is known, we say that $\mC$ is 
an $[n,k,d]_q$ code. A \textbf{generator matrix}  $G\in\F_q^{k\times n}$ of $\mC$ is a matrix such that $\mathrm{rowsp}(G)=\mC$.

Finally, codes $\mathcal{C}$ and $\mathcal{C}^\prime$ are called (\textbf{monomially})  \textbf{equivalent} if there exists an $\F_q$-linear isometry $f: \F_q^n \to \F_q^n$ with $f(\mC)=\mC'$; see~\cite[page 24]{huffman2010fundamentals}.
\end{definition}

Recall that an $[n,k]_q$ code $\mC$ is \textbf{nondegenerate} if there is no $i~\in~[n]$ with $c_i=0$ for all $c\in \mC$. Furthermore, $\mC$ is called \textbf{projective}  if in one (and thus in all) generator matrix~$G$ of~$\mC$ no two columns are proportional. Note that a projective code is necessarily nondegenerate.

In this paper we mostly concentrate on codes whose codewords are all minimal.

\begin{definition} \label{def:mc}
Let $\mC$ be an $[n,k]_q$ code.
A nonzero codeword $c \in \mC$ is called \textbf{minimal} 
if every codeword $c' \in \mC$ with $\sigma(c') \subseteq \sigma(c)$ is a multiple of $c$. We say that $\mC$ is \textbf{minimal} if all its codewords are minimal.
\end{definition}

\begin{remark}\label{rem:maximalminimal}
Following the notation of Definition~\ref{def:mc},
in a minimal code $\mC$ any nonzero codeword $c$ is minimal, but also \textbf{maximal} (i.e., every other codeword $c' \in \mC$ with 
$\sigma(c') \supseteq \sigma(c)$ is a multiple of $c$).
\end{remark}

The following simple result states that every minimal codeword $c$ in a $[n,k]_q$ code~$\mC$ has weight upper bounded by $n-k+1$. To see this, it suffices to puncture $\mC$ on the nonzero positions of $c$, obtaining
a new code whose length is $n-\wH(c)$ and whose dimension is $k-1$.

\begin{proposition}\label{prop:weights}
 Let $\mC$ be an $[n,k]_q$ code.
  Every minimal codeword $c \in \mC$ has $\wH(c) \le n-k+1$.
\end{proposition}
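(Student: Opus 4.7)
The plan is to follow essentially the hint given in the paragraph preceding the proposition, formalising the puncturing argument via a short linear-algebra computation. First I would define the puncturing map $\pi\colon \mC \to \F_q^{n-\wH(c)}$ that deletes from each codeword the coordinates lying in $\sH(c)$. This is clearly $\F_q$-linear, so its kernel and image are subspaces and satisfy the rank-nullity identity $\dim\ker(\pi) + \dim\mathrm{Im}(\pi) = k$.

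Next I would identify the kernel. By construction, $c' \in \ker(\pi)$ if and only if $c'$ vanishes on every coordinate outside $\sH(c)$, i.e.\ $\sH(c') \subseteq \sH(c)$. This is the precise point where minimality enters: since $c$ is minimal, any such $c'$ must be a scalar multiple of $c$, so $\ker(\pi) = \langle c \rangle$ has dimension exactly $1$ (the containment $\langle c\rangle \subseteq \ker(\pi)$ being obvious).

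From rank-nullity we then get $\dim\mathrm{Im}(\pi) = k-1$. Since $\mathrm{Im}(\pi)$ is a subspace of $\F_q^{n-\wH(c)}$, its dimension cannot exceed the ambient dimension, yielding $k-1 \le n-\wH(c)$, which rearranges to the desired inequality $\wH(c) \le n-k+1$.

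There is no real obstacle here; the only point to be slightly careful about is the identification $\ker(\pi) = \langle c\rangle$, which requires invoking the minimality hypothesis (and nothing more). The argument is uniform in $k \ge 1$: when $k=1$ the punctured code is the zero code and the bound $\wH(c)\le n$ is trivial, while for $k\ge 2$ the punctured code is a genuine $[n-\wH(c),k-1]_q$ code.
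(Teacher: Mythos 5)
Your proof is correct and follows exactly the puncturing argument that the paper sketches in the sentence preceding the proposition; you have simply formalised it via rank--nullity and the identification of the kernel with $\langle c\rangle$, which is where minimality enters. No substantive difference from the paper's intended argument.
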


The following result shows that minimal codes have relatively large length with respect to their dimension and field size; see also Remark~\ref{rrr}.

\begin{theorem}[see \cite{alfarano2019geometric, lu2019parameters}] \label{firstn}
Let $\mC$ be an $[n,k]_q$ minimal code with $k \ge 2$. We have 
$n \ge (k-1)q+1$.
\end{theorem}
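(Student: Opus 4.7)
The plan is to combine Proposition~\ref{prop:weights} with a double counting of $\sum_{[c]} \wH(c)$, where $[c]$ ranges over the $\F_q^*$-scalar orbits of nonzero codewords of $\mC$. First I would reduce to the case where $\mC$ admits a generator matrix with no zero column. Indeed, zero columns do not affect supports, so removing them preserves both the dimension $k$ and the minimality of $\mC$ while strictly decreasing $n$; hence any lower bound on $n$ proved under this hypothesis transfers to the original code.

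Next I would partition $\mC \setminus \{0\}$ into $N = (q^k - 1)/(q - 1)$ scalar orbits, each of size $q - 1$, observing that codewords in a common orbit share the same support. Setting $\Sigma = \sum_{[c]} \wH(c)$, I would compute this sum in two different ways. On the one hand, for each coordinate $i \in [n]$ the evaluation $c \mapsto c_i$ is a surjective linear form on $\mC$ (since the $i$-th column of the generator matrix is nonzero), so exactly $q^k - q^{k-1}$ codewords are nonzero at position $i$, and these form $q^{k-1}$ orbits. Summing over $i$ yields $\Sigma = n q^{k-1}$. On the other hand, since $\mC$ is minimal, Proposition~\ref{prop:weights} guarantees $\wH(c) \le n - k + 1$ for every nonzero $c$, hence $\Sigma \le N(n - k + 1)$.

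Combining these two expressions for $\Sigma$ and clearing denominators, the resulting inequality rearranges to $n(q^{k-1} - 1) \ge (k-1)(q^k - 1)$, which is equivalent to
\[
n \ge (k-1)\,\frac{q^k - 1}{q^{k-1} - 1} = (k-1)q + \frac{(k-1)(q-1)}{q^{k-1} - 1}.
\]
For $k \ge 2$ and $q \ge 2$ the fractional remainder on the right is strictly positive, so $n > (k-1)q$, and the desired conclusion $n \ge (k-1)q + 1$ follows from the integrality of $n$. I do not anticipate any substantive obstacle: the counting is elementary, and the only slightly delicate point is the final appeal to integrality, since the fractional remainder shrinks with $k$ but remains strictly positive.
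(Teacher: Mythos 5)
Your proposal is correct, and it is essentially the same argument that the paper gives in Remark~\ref{rrr}: your double-counting of $\Sigma=\sum_{[c]}\wH(c)$ over the $N=(q^k-1)/(q-1)$ scalar orbits is exactly the computation of the average weight $\E(\mC)=\Sigma/N$ in Theorem~\ref{boundvar2}, and bounding $\Sigma\le N(n-k+1)$ is the same as bounding $\E(\mC)$ by the maximum weight $w\le n-k+1$ via Proposition~\ref{prop:weights}. The reduction to a zero-column-free generator matrix and the final passage from $n>(k-1)q$ to $n\ge(k-1)q+1$ by integrality are both sound.
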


The previous bound is not tight in general. More precisely, in \cite{alfarano2019geometric} it was conjectured (and then proved in \cite{tang2019full}) that the length $n$ of an $[n,k]_q$ minimal code satisfies the following lower bound.

\begin{theorem}[see \cite{alfarano2019geometric,tang2019full}] \label{ttt}
Let $\mC$ be an $[n,k]_q$ minimal code with $k \ge 2$. We have 
\begin{equation*}\label{eq:boundGriesmer} n\geq (k-1)(q-1)+1+\sum_{i=1}^{k-1}\left\lceil\frac{(k-1)(q-1)+1}{q^i}\right\rceil.\end{equation*}
\end{theorem}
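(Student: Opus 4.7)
The plan is to establish the stated inequality as a combination of two ingredients: a lower bound on the minimum distance of a minimal code, namely $d(\mC)\ge(k-1)(q-1)+1$, and the classical Griesmer bound. Once the first is in place, the second gives the conclusion immediately: setting $D=(k-1)(q-1)+1$ and using the monotonicity in $d$ of the Griesmer sum, one obtains
\[
n \;\ge\; \sum_{i=0}^{k-1} \Big\lceil \tfrac{d(\mC)}{q^i}\Big\rceil \;\ge\; \sum_{i=0}^{k-1} \Big\lceil \tfrac{D}{q^i}\Big\rceil \;=\; D + \sum_{i=1}^{k-1} \Big\lceil \tfrac{D}{q^i}\Big\rceil,
\]
which is exactly the bound in the statement.

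To prove $d(\mC)\ge(k-1)(q-1)+1$, I would invoke the correspondence between minimal codes and cutting blocking sets: writing $\mathcal{M}$ for the multiset in $\PG(k-1,q)$ associated with the columns of a generator matrix of $\mC$, the code is minimal if and only if for every hyperplane $H$ the set $\mathcal{M}\cap H$ spans $H$, and moreover $d(\mC)=\min_H|\mathcal{M}\setminus H|$ as $H$ ranges over hyperplanes. It thus suffices to show, for any cutting blocking multiset $\mathcal{M}$ in $\PG(k-1,q)$ and any hyperplane $H$, that $|\mathcal{M}\setminus H|\ge(k-1)(q-1)+1$. I would do this by induction on $k$. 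The base case $k=2$ is essentially Theorem~\ref{firstn}: a cutting blocking set in $\PG(1,q)$ must cover every one of the $q+1$ points, so removing any single point leaves at least $q$ points (with multiplicity). For $k\ge 3$, fix $H$ and pick a basis $P_1,\ldots,P_{k-1}$ of $H$ from the support of $\mathcal{M}$ (which exists by the cutting property at $H$). For each index $i$, the codimension-$2$ subspace $L_i=\langle P_j:j\ne i\rangle$ supports a pencil of $q$ hyperplanes $H_{i,1},\ldots,H_{i,q}$ distinct from $H$, and by the cutting property applied to each $H_{i,j}$, the multiset $\mathcal{M}$ contains a point in $H_{i,j}\setminus L_i\subseteq\PG(k-1,q)\setminus H$. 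The final bound comes from combining these $(k-1)q$ elementary constraints with the cutting conditions at hyperplanes in general position (i.e.\ not containing any of the $L_i$), via a careful double-counting that tracks both multiplicities of $\mathcal{M}$ and the incidence structure among the relevant subspaces.

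The main obstacle is precisely this amplification step. Using only a single codimension-$2$ slice $L_i$ produces the weaker estimate $|\mathcal{M}\setminus H|\ge q$, which merely reproduces Theorem~\ref{firstn} and falls short of the required $(k-1)(q-1)+1$ by $(k-2)(q-1)$. Extracting the missing points requires exploiting the cutting condition at hyperplanes oblique to the chosen basis of $H$, and organising the bookkeeping so that no point of $\mathcal{M}\setminus H$ is credited to more constraints than it actually covers. Once the refined combinatorial argument is in place the induction closes, and combining with the Griesmer bound yields the statement.
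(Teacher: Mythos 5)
Your reduction of the statement to the claim $d(\mC)\ge(k-1)(q-1)+1$ via the Griesmer bound is correct and matches the route taken in the cited references: once one knows $d\ge D:=(k-1)(q-1)+1$, monotonicity of $\lceil\cdot\rceil$ in the Griesmer sum yields $n\ge\sum_{i=0}^{k-1}\lceil D/q^i\rceil=D+\sum_{i=1}^{k-1}\lceil D/q^i\rceil$, which is precisely the displayed bound. The translation $d(\mC)=\min_H|\mathcal{M}\setminus H|$ via the projective-system correspondence is also correct.

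The gap is in the claim $d(\mC)\ge(k-1)(q-1)+1$, which is the entire substance of the theorem (it was an open conjecture in~\cite{alfarano2019geometric} until~\cite{tang2019full}, and the present paper supplies an independent proof as Theorem~\ref{thm:lower_bound_old_conjecture}). Your pencil argument correctly produces, for each $i$, one point of $\mathcal{M}$ in $H_{i,j}\setminus L_i$ for each of the $q$ hyperplanes $H_{i,j}\neq H$ through $L_i$, and these $q$ points lie outside $H$ and are pairwise distinct. But a fixed point $x\in\mathcal{M}\setminus H$ lies on exactly one hyperplane through each $L_i$ (namely $\langle L_i,x\rangle$), so it may serve as the witness for as many as $k-1$ of the $(k-1)q$ constraints; dividing out this multiplicity leaves only the bound $|\mathcal{M}\setminus H|\ge q$, which you yourself note is weaker than what is needed by a margin of $(k-2)(q-1)$. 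The ``careful double-counting'' and the role of hyperplanes ``in general position'' are asserted but not carried out, and you explicitly flag this as the main obstacle. As written, the induction does not close, so the proof is incomplete precisely at its only nontrivial step. For contrast, the paper's own route to $d\ge(k-1)(q-1)+1$ is algebraic rather than incidence-geometric: it attaches the support polynomial $p_{G,I}$ to a maximal codeword, identifies its nonzero locus on $\F_q^k$ as a single punctured line of size $q-1$, and then invokes the Alon--F\"uredi theorem to force $\deg\bar{p}_{G,I}\ge(q-1)(k-1)+1$; this cleanly sidesteps the combinatorial amplification you are missing. You would need either to import that algebraic argument (or the one of~\cite{tang2019full}) or to actually supply the refined counting in your sketch.
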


In Section~\ref{sec:2} we will further improve the bound  in Theorem~\ref{ttt} using methods from algebraic combinatorics; see Theorem~\ref{thm:lower_bound_length}.

\subsection{Minimal Codes and Cutting Blocking Sets}\label{sec:mincodescut}

The concept of a \textit{cutting blocking set} was introduced in \cite{bonini2020minimal} with the goal of constructing a family of minimal codes. The same objects were know earlier under various names and in different contexts. In \cite{davydov2011linear} these are 
called $N$-\emph{fold strong blocking set} and are used for constructing small saturating sets in projective spaces over finite fields. In \cite{fancsali2014lines}, cutting blocking sets are  referred to as \emph{generator sets} and are constructed as union of disjoint lines.
In \cite{alfarano2019geometric} and \cite{tang2019full} it was independently shown that cutting blocking sets are in one to one correspondence with minimal linear codes.  In this subsection, we recall some properties of (cutting) blocking sets and known results about their size.

Consider the  finite projective geometry of dimension $N$ and order $q$, denoted by $\PG(N,q)$. Recall that
$$ \PG(N,q):= \left(\F_q^{N+1}\setminus \{0\}\right)/_\sim, $$
where $\sim$ denotes the proportionality relation, i.e.,
$u\sim v$ if and only if $u=\lambda v$
for some nonzero $\lambda \in \F_q$.
A \textbf{$d$-flat} in $\PG(N,q)$ is a subspace $\Pi$  isomorphic to $\PG(d,q)$. A $1$-flat is  a \textbf{line}, while a $2$-flat is a \textbf{plane}. If $d=N-1$, then $\Pi$
is called a \textbf{hyperplane}.

In our approach, projective systems are crucial geometric objects for the study of linear codes and their properties.

\begin{definition}\label{def:projsystem}
 A \textbf{projective} $[n,k,d]_q$ \textbf{system} $\mathcal{P}$ is a finite  set of $n$ points (counted with multiplicity) of $\PG(k-1,q)$ that do not all lie on a hyperplane and such that $$d = n- \max\{|H\cap \mathcal{P}| \, : \,  H\subseteq \PG(k-1,q), \; \dim(H) = k-2\}.$$ 
 Projective $[n,k,d]_q$ systems $\mathcal{P}$ and $\mathcal{P}^\prime$ are \textbf{equivalent} if there exists a projective isomorphism~$\phi$ of $\PG(k-1,q)$ mapping $\mathcal{P}$ to $\mathcal{P}^\prime$ which preserves the multiplicities of the points.
\end{definition}

There is a well-known correspondence\label{page:correspondence} between the (monomial) equivalence classes of nondegenerate $[n,k,d]_q$ linear codes and the equivalence classes of projective $[n,k,d]_q$ systems; see \cite[Theorem~1.1.6]{MR1186841}.
More precisely, let $G$ be a $k\times n$ generator matrix of an $[n,k]_q$ linear code. Consider the set $\mathcal{P}$ of one-dimensional subspaces of $\F_q^n$ spanned by the columns of $G$, which gives a set of points in $\PG(k-1,q)$.  Conversely, let $\mathcal{P}$ be a projective $[n,k,d]_q$ system. Choose a representative for any point of $\mathcal{P}$ and consider the code generated by the matrix having these representatives as columns.
 Now observe that for any nonzero vector $ u=(u_1,u_2,\ldots,u_k) $ in~$\F_q^k$ the  hyperplane
\[
u_1x_1+u_2x_2+\cdots + u_kx_k=0
\]
 contains $|\mathcal{P}|-w$ points of $\mathcal{P}$ if and only if the codeword $uG$ has weight $w$.

\begin{definition}Let $t, r, N$ be positive integers with $r<N$. A $t$-\textbf{fold} $r$-\textbf{blocking set} in~$\PG(N,q)$ is a set $\M\subseteq \PG(N,q)$ such that for every $(N-r)$-flat $\Lambda$ of $\PG(N,q)$ we have $|\Lambda \cap \M|\geq t$. When $r=1$, we will  refer to $\M$ as a $t$-\textbf{fold blocking set}. When $t=1$, we will refer to it as an $r$-\textbf{blocking set}. When  $r=t=1$, $\M$ is simply a \textbf{blocking set}.
\end{definition}

Cutting blocking sets are defined as follows.

\begin{definition}
 Let $r, N$ be positive integers with $r<N$. An $r$-blocking set $\M$ in $\PG(N,q)$ is \textbf{cutting} if for every pair of $(N-r)$-flats $\Lambda, \Lambda'$ of $\PG(N,q)$ we have
 $$ \M \cap \Lambda \subseteq \M \cap \Lambda' \,\, \Longleftrightarrow \,\, \Lambda =\Lambda'.$$ Equivalently, an $r$-blocking set $\mathcal{M} \subseteq \PG(N,q)$ is cutting if and only if for every $(N-r)$-dimensional subspace $\Lambda$ of $\PG(N,q)$ we have $\langle \mathcal{M} \cap \Lambda \rangle =\Lambda$; see \cite{alfarano2019geometric}.
 \end{definition}
 
It is shown in \cite{alfarano2019geometric,tang2019full} that the correspondence described above between projective $[n,k,d]_q$ systems and nondegenerate $[n,k,d]_q$ linear codes extends to a correspondence between equivalence classes of $[n,k,d]_q$ minimal codes and equivalence classes of projective $[n,k,d]_q$  systems that are  cutting blocking sets. This geometric interpretation of minimal codes will be crucial in Section~\ref{sec:4}.

\begin{remark}
As already mentioned in the Introduction, we are
particularly interested in finding lower bounds on the length of minimal codes or, equivalently, lower bounds on the size of cutting blocking sets in projective spaces. From this point of view,
it is not restrictive to only consider projective codes, which correspond to 
projective systems in which all the points have multiplicity one.
\end{remark}

It immediately follows from the definitions that a cutting blocking set $\mathcal{M}$ in $\PG(N,q)$ is necessarily an $N$-fold blocking set. The following theorem is obtained by combining a well-known result of Beutelspacher (which gives a lower bound on the cardinality of an $N$-fold blocking set in $\PG(N,q)$ when $N\leq q$) and the correspondence between minimal codes and cutting blocking sets.

\begin{theorem}[\text{see \cite[Theorem~2]{beutelspacher1983}}]\label{thm:sizeNfold}
Let $\mC$ be an $[n,k]_q$ minimal code. If $k-1\leq q$, then $n\geq (q+1)(k-1)$.
\end{theorem}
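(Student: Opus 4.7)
The plan is to translate the inequality into the language of cutting blocking sets and then invoke the classical bound of Beutelspacher cited in the theorem. First, I would use the correspondence recalled in Section~\ref{sec:mincodescut} to associate to the minimal code $\mathcal{C}$ a projective $[n,k]_q$ system $\mathcal{M}$ in $\PG(k-1,q)$ which, by the minimality of $\mathcal{C}$, is a cutting $1$-blocking set. In particular $|\mathcal{M}| = n$ and, by the equivalent formulation of the cutting property, for every hyperplane $H$ of $\PG(k-1,q)$ the intersection $\mathcal{M}\cap H$ spans $H$.

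Second, I would observe that a hyperplane of $\PG(k-1,q)$ has projective dimension $k-2$, so any spanning subset of it must contain at least $k-1$ points. Combining this with the previous paragraph gives $|\mathcal{M}\cap H|\geq k-1$ for every hyperplane $H$, which is precisely the condition that $\mathcal{M}$ is an $(k-1)$-fold (1-)blocking set in $\PG(k-1,q)$, as already remarked just before the statement of the theorem.

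Finally, under the hypothesis $k-1\leq q$ the parameters fall in the range covered by Beutelspacher's theorem \cite[Theorem~2]{beutelspacher1983}, which asserts that any $t$-fold blocking set in $\PG(N,q)$ with $t\leq q$ has cardinality at least $t(q+1)$. Applied with $N=t=k-1$, this yields $n=|\mathcal{M}|\geq (k-1)(q+1)$, which is the inequality to be proved.

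There is essentially no obstacle here: the core analytic content is Beutelspacher's classical estimate, which we are allowed to cite, and the only bookkeeping needed is the elementary fact that a spanning subset of an $m$-dimensional projective space contains at least $m+1$ elements. The novelty, such as it is, lies only in pointing out that the cutting condition automatically forces the $N$-fold blocking property, so that the coding-theoretic hypothesis of minimality of $\mathcal{C}$ can be fed directly into Beutelspacher's bound.
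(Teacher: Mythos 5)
Your proof is correct and follows exactly the approach the paper indicates just before the statement: translate the minimal code into a cutting blocking set in $\PG(k-1,q)$, note that the cutting property forces it to be a $(k-1)$-fold blocking set, and then apply Beutelspacher's bound under the hypothesis $k-1\leq q$. There is no meaningful difference between your argument and the paper's.
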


The above results uses the fact that cutting blocking sets in $\PG(k-1,q)$ are in particular $(k-1)$-fold blocking sets. Beutelspacher also characterized $(k-1)$-fold blocking sets in $\PG(k-1,q)$ with cardinality $(q+1)(k-1)$,  under the further assumption that $k\leq \sqrt{q}+2$. Recall that, when $q$ is a square, a \textbf{Baer subspace} of $\PG(N,q)$ is a subgeometry isomorphic to $\PG(N, \sqrt{q})$.

\begin{theorem}[\text{see \cite[Theorem 3]{beutelspacher1983}}]\label{thm:characterization_beutelspacher}
 Let $4\leq k\leq \sqrt{q}+2$ and let $\mathcal{M}$ be a $(k-1)$-fold blocking set in $\PG(k-1,q)$. Then $|\mathcal{M}|\geq (q+1)(k-1)$. Moreover, equality holds if and only if one of the following scenarios occurs:
 \begin{enumerate}
     \item $\mathcal{M}$ is the set of points on $k-1$ mutually skew lines.
     \item $k=\sqrt{q}+2$ and $\mathcal{M}$ is the point set of a $3$-dimensional Baer subspace of $\PG(k-1,q)$.
     \item $q=4$, $k=4$, and $\mathcal{M}$ is the complement of a hyperoval in a plane of $\PG(k-1,q)$, where an hyperoval is a set of $q + 2$ points in a plane, no three of which are collinear.
 \end{enumerate}
\end{theorem}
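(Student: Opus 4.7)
The plan is to prove the lower bound by a projection-based induction on $k$, and then to extract the extremal characterization by tracking equality through each step of that induction, appealing to classical results about small minimal blocking sets in projective planes.

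For the lower bound, I would induct on $k$. The base case $k=4$ can be handled via double counting the pairs $(P,H)$ with $P\in\mathcal{M}\cap H$ and $H$ a plane of $\PG(3,q)$: each such plane contains at least $3$ points of $\mathcal{M}$, and comparing this sum with the fact that each point of $\mathcal{M}$ lies on $(q^2+q+1)/(q+1)\cdot(q+1)$ planes yields $|\mathcal{M}|\ge 3(q+1)$. For the inductive step, I would fix a point $P\in\mathcal{M}$ and consider the projection $\pi\colon\PG(k-1,q)\setminus\{P\}\to\PG(k-2,q)$ from $P$. Every hyperplane $\bar H$ of $\PG(k-2,q)$ lifts to a hyperplane through $P$ in $\PG(k-1,q)$, which by hypothesis meets $\mathcal{M}$ in at least $k-1$ points, one of which is $P$. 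Hence $\pi(\mathcal{M}\setminus\{P\})$ is a $(k-2)$-fold blocking set in $\PG(k-2,q)$, and the induction hypothesis gives $|\pi(\mathcal{M}\setminus\{P\})|\ge(q+1)(k-2)$. Carefully accounting for fibres of $\pi$ that meet $\mathcal{M}\setminus\{P\}$ in more than one point (each contributing to a line through $P$ inside $\mathcal{M}$) supplies the extra $q+1$ points needed to reach the target $(q+1)(k-1)$.

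For the characterization, assume $|\mathcal{M}|=(q+1)(k-1)$ together with $4\le k\le\sqrt{q}+2$. Equality in the inductive step forces each fibre of $\pi$ (intersected with $\mathcal{M}\setminus\{P\}$) to contribute either a single point or a full affine line whose closure is a line of $\mathcal{M}$ through $P$. Here the bound $k\le\sqrt{q}+2$ plays the decisive role: any intermediate fibre size would yield, in a suitable plane section, a minimal blocking set whose cardinality lies strictly between $q+1$ and $q+\sqrt{q}+1$, contradicting the classical lower bound of Blokhuis and collaborators on the second smallest minimal blocking set in $\PG(2,q)$. Having ruled this out, varying $P$ over $\mathcal{M}$ and iterating the local decomposition pins down the global structure: either every point of $\mathcal{M}$ lies on a unique line of $\mathcal{M}$ and these lines are mutually skew, giving case~(1); or $\mathcal{M}$ together with its secant structure forms a subgeometry isomorphic to $\PG(3,\sqrt{q})$ (possible only when $k=\sqrt{q}+2$), giving case~(2); or the parameters are so small that a direct finite check exhibits the exceptional hyperoval complement in case~(3). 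That each of the three candidates does attain equality is then a direct verification.

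The main obstacle will be the characterization: the lower bound is a relatively clean induction, but excluding \emph{mixed} configurations requires combining the inductive analysis with external results of Blokhuis--Ball--Brouwer type on small minimal blocking sets in planes, and the hypothesis $k\le\sqrt{q}+2$ is exactly what aligns these two inputs. The exceptional case $q=4$, $k=4$ is genuinely anomalous and must be treated by hand, since here $\sqrt{q}+2=k$ is small enough that the plane bound $q+\sqrt{q}+1$ coincides with the size of a hyperoval complement and a separate configuration sneaks into the classification.
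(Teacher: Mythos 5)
This statement is not proved in the paper at all; it is imported verbatim as \cite[Theorem~3]{beutelspacher1983}. So there is no internal proof to compare against, and your proposal has to stand or fall on its own. It currently falls, for two concrete reasons.

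First, the base case. In $\PG(3,q)$ the number of planes is $q^3+q^2+q+1=(q+1)(q^2+1)$ and the number of planes through a fixed point is $q^2+q+1$. The double count you describe therefore gives
\[
|\mathcal{M}|\;\geq\;\frac{3(q+1)(q^2+1)}{q^2+q+1}\;=\;3q+\frac{3}{q^2+q+1},
\]
i.e.\ $|\mathcal{M}|\geq 3q+1$ after taking ceilings. This is strictly short of the target $3(q+1)=3q+3$. The naive incidence count simply does not reach Beutelspacher's bound, and no rearrangement of the same count will, because $(q^2+1)/(q^2+q+1)<1$. The base case needs a genuinely sharper tool (in Beutelspacher's argument this is where Bruen's lower bound $q+\sqrt q+1$ for nontrivial planar blocking sets enters, together with a more refined counting of tangent lines through a point of the set).

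Second, the inductive step. You assert that if $P\in\mathcal{M}$ and $\pi$ is the projection from $P$, then $\pi(\mathcal{M}\setminus\{P\})$ is a $(k-2)$-fold blocking set in $\PG(k-2,q)$. That does not follow from the hypothesis: a hyperplane $H$ through $P$ contains at least $k-2$ points of $\mathcal{M}\setminus\{P\}$, but several of those points may lie on a common line through $P$ and hence collapse to a single point of $\pi(\mathcal{M}\setminus\{P\})$. What you actually get is a lower bound on the number of \emph{secant and tangent lines} through $P$ lying in $H$, not on the number of points of the projected set in the corresponding hyperplane of $\PG(k-2,q)$. Repairing this is exactly the delicate part of the real proof: one has to choose $P$ carefully (or average over $P$) and account for the lines through $P$ that carry many points of $\mathcal{M}$, and this is where the upper bound on $k$ in terms of $q$ becomes load-bearing. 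Your sentence ``carefully accounting for fibres\dots supplies the extra $q+1$ points'' is precisely the part that cannot be waved through, and as written the argument does not close.

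For the characterization, you correctly sense that a planar ``second smallest blocking set'' bound is the key external input and that $k\le\sqrt q+2$ is calibrated to it (the relevant classical result is Bruen's, not Blokhuis's, though Blokhuis later sharpened it). But because the lower-bound induction itself is not yet sound, the equality analysis built on top of it is not yet in a state where it can be checked. As it stands there is a genuine gap both in the base case arithmetic and in the claim that projection preserves the $t$-fold blocking property.
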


In \cite[Lemma 4.9]{alfarano2019geometric} and in \cite{tang2019full} it was observed that cutting blocking sets in $\PG(2,q)$ and $2$-fold blocking sets are actually the same object. Moreover, in $\PG(2,q)$ one can always construct a $2$-fold blocking set of size $3q$, or equivalently a $[3q,3]_q$ minimal code, by considering the union of three lines that do not intersect in the same point. When $q$ is a square, one can construct a cutting blocking set as union of two disjoint Baer subplanes, producing a minimal code of length $2q+2\sqrt{q}+2$. 
We thus survey the known results on the cardinality of $2$-fold blocking sets in $\PG(2,q)$, which turn out to be an accurate estimates also for the length of minimal codes of dimension~$3$.
 
 \begin{theorem}[\text{see \cite[Theorem 3.1]{ball1996size}}]\label{thm:2fold}
  Let $\mathcal{M}$ be a $2$-fold blocking set in $\PG(2,q)$. The following  hold.
  \begin{enumerate}
      \item If $q<9$, then $|\mathcal{M}|\geq 3q$.
      \item If $q>4$ is a square, then $|\mathcal{M}|\geq 2q+2\sqrt{q}+2$.
      \item\label{part:qnonsquare} If $q>19$, $q=p^{2d+1}$, then $|\mathcal{M}|\geq 2q+p^d\left\lceil\frac{(p^{d+1}+1)}{(p^d+1)}\right\rceil+2$.
      \item \label{part:qprime} If $q=11,13,17,19$ is not a square, then $|\mathcal{M}|\geq\frac{(5q+7)}{2}$.
  \end{enumerate}
 \end{theorem}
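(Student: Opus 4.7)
The plan is to treat the four parts separately: the trivial lower bound $|\mathcal{M}| \geq 2q+2$ by incidence counting handles a common base case, a combinatorial case analysis dispatches part (1) where $q<9$, and the R\'edei polynomial together with lacunary-polynomial theorems of Blokhuis and Sz\H{o}nyi drive the asymptotic bounds in parts (2)--(4).

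First I would record the trivial lower bound. Pick a line $\ell_0$ meeting $\mathcal{M}$ in a minimum number $s \geq 2$ of points. Each of the $q+1-s$ points of $\ell_0 \setminus \mathcal{M}$ lies on $q$ further lines, each of which must contain at least two points of $\mathcal{M}$ off $\ell_0$, and these $2q$ points are pairwise distinct (different lines through a point $P\notin \mathcal{M}$ meet only at $P$). This yields $|\mathcal{M}| \geq 2q + s \geq 2q+2$. For part (1), with $q\leq 8$, I would close the gap to $3q$ by a case split: if $\mathcal{M}$ contains a line, removing it leaves a 1-fold blocking set on which one iterates the count; if not, the number of $i$-secants is tightly constrained by the standard incidence equations, and for $q\leq 8$ only a finite list of configurations satisfies $2q+2 \leq |\mathcal{M}| < 3q$, each of which can be excluded directly or by invoking the classification of small blocking sets in small planes.

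For parts (2)--(4), I would turn to the R\'edei-polynomial method. Affinize so that $\ell_0$ is the line at infinity, write the affine part of $\mathcal{M}$ as $\{(a_i,b_i) : i \in [m-s]\}$ with $m=|\mathcal{M}|$, and consider the R\'edei polynomial
\[
R(X,Y) \;=\; \prod_{i=1}^{m-s} \bigl(X + a_i Y - b_i\bigr) \;\in\; \F_q[X,Y],
\]
of total degree $m-s$. For each direction $y\in \F_q$ corresponding to a point at infinity \emph{not} in $\mathcal{M}$, the $2$-fold blocking condition forces every affine line of slope $y$ to meet $\mathcal{M}$ in at least two points; translating this to the specialization $R(X,y)\in\F_q[X]$ yields that every element of $\F_q$ is a root of multiplicity at least $2$. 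In particular $R(X,y)$ is divisible by $(X^q-X)^2$ after suitable normalization, which is a strong structural constraint on the coefficients of $R$.

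The main obstacle is converting this divisibility into the sharp lower bounds on $m-s$, and this is precisely the role of the Blokhuis--Sz\H{o}nyi lacunary-polynomial theorems: a polynomial over $\F_q$ of low degree whose reduction modulo $X^q-X$ has a prescribed lacunary shape must be a $p^e$-th power, where the admissible $e$ depends on how close $q$ is to a square. In the square case (part (2)) this yields $m-s \geq q + 2\sqrt{q}$, so adding back $s\geq 2$ gives $|\mathcal{M}|\geq 2q+2\sqrt{q}+2$, with equality forcing $\mathcal{M}$ to be a union of two Baer subplanes via Bruen's classification. In the non-square case $q=p^{2d+1}$ (part (3)), the sharper $p^d$-divisibility of Sz\H{o}nyi produces exactly the ceiling factor $p^d\lceil(p^{d+1}+1)/(p^d+1)\rceil$. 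The small-prime cases $q\in\{11,13,17,19\}$ in part (4) lie outside the range where the polynomial bound is tight, and I would close them by combining the polynomial estimate with a finer combinatorial analysis of the possible intersection types, supplemented by computer-assisted verification when necessary to reach $(5q+7)/2$.
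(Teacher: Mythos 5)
The paper does not prove Theorem~\ref{thm:2fold}; it is quoted from Ball \cite{ball1996size} and used as a black box, so there is no in-paper argument to compare against. Judged against Ball's original proof, your sketch identifies the correct toolkit --- the elementary incidence count giving $|\mathcal{M}|\geq 2q+2$, the R\'edei polynomial of the affine part, the divisibility $(X^q-X)^2\mid R(X,y)$ for directions whose infinite point misses $\mathcal{M}$, and the Blokhuis--Sz\H{o}nyi lacunary-polynomial theorems as the engine behind parts (2)--(4) --- and to that extent it is in the right spirit.

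It is not, however, a proof. The decisive step, from the divisibility to the stated bounds, is missing: one must write $R(X,y)=(X^q-X)^2 g(X,y)$, pin down the lacunary shape this forces on $R$ as a polynomial in $X$ over $\F_q[Y]$, and then invoke a quantitative lacunary-polynomial theorem whose admissible exponent $e$ in the ``$p^e$-th power'' conclusion depends on the arithmetic of $q$; that is where the $\sqrt{q}$ in part (2) and the factor $p^d\lceil(p^{d+1}+1)/(p^d+1)\rceil$ in part (3) come from, and none of it is carried out. There is also a concrete arithmetic slip in the square case: you write ``$m-s\geq q+2\sqrt q$, so adding back $s\geq 2$ gives $|\mathcal{M}|\geq 2q+2\sqrt q+2$,'' but $q+2\sqrt q+2$ is $q$ short of the target; the polynomial argument has to yield $m-s\geq 2q+2\sqrt q$. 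Part (4) is misattributed: the constant $(5q+7)/2$ for $q\in\{11,13,17,19\}$ is again the output of the lacunary-polynomial machinery, in the regime where $q$ is a non-square too small for the part (3) exponent to dominate; it is not a computer search. Finally, the case analysis you promise for part (1) is not actually given. So the approach is the right one, but essentially all the substantive mathematics --- the precise polynomial theorem, the degree bookkeeping, and the treatment of small $q$ --- is deferred rather than done.
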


The bounds in Theorem~\ref{thm:2fold}, parts \eqref{part:qnonsquare} and \eqref{part:qprime}, are believed not to be sharp; see \cite[page 133]{ball1996size}. In particular, we are not aware of any
construction of $2$-fold blocking sets achieving these sizes.

\medskip

\begin{remark} \label{remdisc} In the literature, there are 
two \textit{general} constructions of small cutting blocking sets
we are aware of, which we briefly sketch in this remark. 

The first one was proposed by Fancsali and Sziklai in \cite{fancsali2014lines} and it works as follows. One chooses any $2k-3$ distinct points on the rational normal curve in $\PG(k-1,q)$ and takes the union of the tangent lines at these points. The resulting set is a cutting blocking set, under the  assumption that the characteristic of the field is at least $k$. We call this set the \textbf{rational normal tangent set}. The corresponding codes are minimal $[(2k-3)(q+1),k]_q$ codes whose minimum distance was proved to be at least $kq$ in \cite{bartoli2020cutting}. The drawback of this construction is the constraint on both the size ($q$) and the characteristic ($p$) of the underlying field, reading $q\geq 2k-3$ and $p\geq k$. For a fixed value of $q$, the approach of \cite{fancsali2014lines}  constructs cutting blocking sets in $\PG(k-1,q)$ for only a finite number of values of $k$.

A second construction that instead works for every choice of the parameters $k$ and $q$ can be found in~\cite{alfarano2019geometric,lu2019parameters,bartoli2019inductive}. Consider $k$ points $P_1,\dots, P_k$ in general position in $\PG(k-1,q)$ and let $\ell_{i,j}:=\langle P_i, P_j \rangle$. Then the union of these lines gives a cutting blocking set. From this construction, called \textbf{tetrahedron}, \label{tetra} one obtains a family of $[(q-1)\binom{k}{2}  +k,k,(q-1)(k-1)+1]_q$ minimal codes. As a consequence of Theorem~\ref{thm:2fold}, when $k=3$ this construction provides a minimal $2$-fold blocking set in $\PG(2,q)$ for any $q<9$.
\end{remark}

\bigskip

\section{Algebraic Combinatorial Approach}
\label{sec:2}

This section develops an algebraic combinatorial approach to study minimal codes. The method uses a generator matrix of a  linear code to build a multivariate polynomial ``machinery''. This allows us 
to study the maximal codewords of a code by applying classical results on the number of roots of  multivariate polynomials over finite grids. As an application of our method, with the aid of Alon's Combinatorial Nullstellensatz \cite{alon2001combinatorial} and the  Alon-F\"uredi Theorem \cite{alon1993covering}, we improve  known lower bounds on the minimum distance and the length of minimal codes.

It is interesting to observe that the results contained in this section are mainly exploiting the fact that in a minimal code all the codewords are \textit{maximal}, as already observed in Remark~\ref{rem:maximalminimal}. Although in a minimal code this code property is equivalent to all codewords being minimal, the focus on maximal codewords is crucial for deriving both the lower bound on the minimum distance (Theorem~\ref{thm:lower_bound_old_conjecture}) and the lower bound on the length (Theorem~\ref{thm:lower_bound_length}) of minimal codes.

\subsection{Combinatorial Nullstellensatz and Alon-F\"uredi Theorem}
We start by surveying tools from algebraic combinatorics that will be applied repeatedly. Among these are Alon's Combinatorial Nullstellensatz and the Alon-F\"uredi Theorem.

\begin{notation}
We state the results of this subsection and of the next one for an arbitrary field~$\F$.
In Subsection~\ref{comb_md} we will resume focusing on the case $\F=\F_q$ and on linear codes.
\end{notation}

For a multivariate polynomial $p \in \F[x_1,\ldots,x_k]$ and a subset $A\subseteq \F^k$, denote by $V_A(p)$ the set of zeros of $p$ in $A$, and by $U_A(p)$ the nonzeros of $p$ in $A$, i.e.,
\begin{align*} 
V_A(p) & = \left\{ v \in A\mid p(v) = 0\right\},\\ 
U_A(p) & = \left\{ u \in A\mid p(u) \neq 0\right\}.
     \end{align*}

The Alon--F\"uredi Theorem \cite[Theorem 5]{alon1993covering} gives a lower bound on the cardinality of $U_A(p)$ when~$A$ is a finite grid and $p$ is not identically zero on $A$. Equivalently, it provides an upper bound on the number of zeros of $p$. We recall it for convenience of the reader. 

\begin{theorem}[Alon--F{\"u}redi  Theorem \cite{alon1993covering}]\label{thm:AlonFuredi}
Let $A=A_1\times \ldots \times A_k\subseteq \F^k$ be a finite grid with $A_i\subseteq \F$ and $|A_i|=n_i$, where $n_1 \geq n_2 \geq \ldots \geq n_k \geq
  2$. Let $p \in \F[x_1,\ldots,x_k]$ be a polynomial that is not identically $0$ on $A$, and let $\bar{p}$ be the polynomial $p$ modulo the ideal $(f_1(x_1),\ldots, f_k(x_k))$, where $f_i(x_i)=\prod_{a\in A_i}(x_i-a)$. Then
$$|U_A(p)| \geq (n_s-\ell)\prod_{i=1}^{s-1} n_i,$$ 
  where $\ell$ and $s$ are the unique integers satisfying
  $\deg \bar{p}=\sum_{i=s+1}^k (n_i-1) + \ell$, with $1\leq s \leq k$ and $1 \leq \ell \leq n_s-1$.

\end{theorem}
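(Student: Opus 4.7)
My plan is to prove the theorem by induction on $k$, after first reducing to the case $p = \bar p$; once the uniform inequality on nonzeros is established, the explicit formula in the statement follows from a combinatorial optimization argument. Since each $f_i(x_i) = \prod_{a \in A_i}(x_i - a)$ vanishes identically on $A$, the polynomial $p - \bar p$ vanishes on all of $A$, so $U_A(p) = U_A(\bar p)$; I may thus replace $p$ by $\bar p$ and assume $\deg_{x_i} p \leq n_i - 1$ for every $i$. Setting $d := \deg p$, I will prove the uniform lower bound
$$|U_A(p)| \geq \phi(d; n_1, \ldots, n_k) := \min\bigg\{\prod_{i=1}^{k} y_i \,:\, 1 \leq y_i \leq n_i,\ \sum_{i=1}^{k}(n_i - y_i) = d\bigg\}.$$
The base case $k = 1$ is the elementary fact that a univariate polynomial of degree $d$ has at most $d$ roots, giving $|U_{A_1}(p)| \geq n_1 - d = \phi(d; n_1)$.

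For the inductive step, expand $p = \sum_{j=0}^{r} x_1^j \, q_j(x_2, \ldots, x_k)$ with $q_r \neq 0$, so that $r \leq n_1 - 1$ and $\deg q_r \leq d - r$ (since every monomial of $p$ has total degree at most $d$); moreover, $q_r$ is itself reduced in the remaining variables, inheriting the bounds $\deg_{x_i} q_r \leq n_i - 1$ from $p$. For every $(a_2, \ldots, a_k) \in U_{A_2 \times \cdots \times A_k}(q_r)$, the univariate slice $p(x_1, a_2, \ldots, a_k)$ has exact degree $r$ in $x_1$ and therefore at least $n_1 - r$ nonzeros in $A_1$, which yields
$$|U_A(p)| \geq (n_1 - r)\,|U_{A_2 \times \cdots \times A_k}(q_r)| \geq (n_1 - r)\,\phi(d - r; n_2, \ldots, n_k)$$
upon applying the induction hypothesis to $q_r$ and using the elementary monotonicity of $\phi$ in its degree argument. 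The pair $(n_1 - r, y_2^*, \ldots, y_k^*)$, with $(y_i^*)_{i \geq 2}$ any optimizer of $\phi(d - r; n_2, \ldots, n_k)$, is feasible for $\phi(d; n_1, \ldots, n_k)$, which closes the induction.

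Finally, under the ordering $n_1 \geq n_2 \geq \cdots \geq n_k \geq 2$, a standard exchange argument identifies the minimizer of $\phi(d; n_1, \ldots, n_k)$ as $(y_1, \ldots, y_k) = (n_1, \ldots, n_{s-1}, n_s - \ell, 1, \ldots, 1)$, whose product equals $(n_s - \ell)\prod_{i=1}^{s-1} n_i$: moving one unit of deficit from a coordinate $j$ to a coordinate $i < j$ via $y_i \mapsto y_i + 1$, $y_j \mapsto y_j - 1$ is feasible whenever $y_i < n_i$ and $y_j > 1$, and the product changes by the factor $(y_i + 1)(y_j - 1)/(y_i y_j)$, which is $\leq 1$ when $y_j \leq y_i + 1$; iterating such weakly decreasing swaps under the given ordering drives every configuration to the claimed one. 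The main obstacle I anticipate is making the termination and optimality of this exchange argument watertight across corner cases (notably when the deficit $d$ is close to $\sum_i (n_i - 1)$, or when several $n_i$ coincide); the algebraic content of the induction itself is very compact once $\phi$ has been identified as the right inductive invariant.
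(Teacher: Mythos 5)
The paper cites Theorem~\ref{thm:AlonFuredi} from \cite{alon1993covering} without proof, so there is no internal argument to compare against; what follows assesses your proposal on its own terms. Your strategy --- reduce to $\bar p$, peel off the leading $x_1$-coefficient $q_r$, slice, and recurse through the ``balls-in-bins'' invariant $\phi$ --- is exactly the classical inductive route to Alon--F\"uredi, and identifying $\phi$ as the right inductive quantity is precisely what makes the recursion close, so the overall plan is sound.

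Two points, however, need attention. The minor one: when invoking the induction hypothesis for $q_r$ you need $q_r$ to be not identically zero on $A_2\times\cdots\times A_k$; you correctly observe that $q_r$ is a nonzero polynomial reduced modulo $(f_2,\ldots,f_k)$, but the implication that such a polynomial cannot vanish on the entire subgrid is itself a lemma (injectivity of evaluation on reduced polynomials) that must be recorded --- without it the chain $|U_A(p)|\ge(n_1-r)|U(q_r)|$ could bottom out at zero. The major one is the step you flag yourself, and it is a genuine gap: the unit-exchange argument as stated does \emph{not} establish the formula for $\phi$. You only allow moves $y_i\mapsto y_i+1$, $y_j\mapsto y_j-1$ with $i<j$, retained when $y_j\le y_i+1$, and this process can halt at a point whose product strictly exceeds the minimum. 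Concretely, take $k=3$, $(n_1,n_2,n_3)=(5,4,2)$ and $d=4$, so the feasible set is $\{1\le y_i\le n_i,\ y_1+y_2+y_3=7\}$. At $(y_1,y_2,y_3)=(2,4,1)$ the product is $8$ and no admissible move exists: the pair $(1,2)$ fails the condition $y_2\le y_1+1$, while the pairs $(1,3)$ and $(2,3)$ are blocked by $y_3=1$ and $y_2=n_2$. Yet $\phi(4;5,4,2)=5$, attained at $(5,1,1)$, which here is also the value predicted by $(s,\ell)=(2,3)$. To repair the lemma you either need longer exchanges --- e.g.\ $(2,4,1)\to(4,2,1)\to(5,1,1)$ does succeed --- or, more cleanly, use that $\log\prod y_i$ is concave, so the minimum over the polytope $\{1\le y_i\le n_i,\ \sum y_i=\sum n_i-d\}$ is attained at a vertex (at most one coordinate strictly interior), and then compare vertices directly under the ordering $n_1\ge\cdots\ge n_k$. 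Until that combinatorial lemma is proved rigorously, the identification $\phi(d;n_1,\ldots,n_k)=(n_s-\ell)\prod_{i=1}^{s-1}n_i$, and hence the theorem, is unsupported.
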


The above theorem relies on the fact that the polynomial $p$ is not identically zero on the finite grid we are interested in. However, when dealing with polynomials that are not explicitly given, this property is not always easy to verify. In this direction, the celebrated Alon's Combinatorial Nullstellensatz
helps determining a sufficient condition for a polynomial to be nonzero on a finite grid. We state it here for completeness.

\begin{theorem}[Combinatorial Nullstellensatz \cite{alon2001combinatorial}]\label{thm:combinatorial_nullstellensatz}
 Let $p \in \F[x_1,\ldots, x_k]$ and let $\deg p=\sum_{i=1}^k r_i$, for some $r_1,\ldots, r_k \in \N$. Suppose that the coefficient of the monomial $x_1^{r_1}x_2^{r_2}\cdots x_k^{r_k}$ in $p$ is nonzero. Let $A:=A_1\times \ldots\times A_k \subseteq \F^k$ be a grid with $|A_i|\geq r_i+1$ for all $i \in [k]$. Then $U_A(p)\neq \emptyset$.
\end{theorem}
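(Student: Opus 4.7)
The plan is a proof by contradiction. Suppose toward a contradiction that $p$ vanishes identically on the grid $A$, so that $U_A(p) = \emptyset$. The strategy is to reduce $p$ modulo the ``vanishing polynomials'' of each coordinate in order to obtain a new polynomial $\bar{p}$ which (a) still vanishes on $A$, (b) has individual degree in each $x_i$ strictly less than $|A_i|$, and (c) still has a nonzero coefficient in front of the distinguished monomial $x_1^{r_1}\cdots x_k^{r_k}$. Items (a) and (b) will be in direct conflict through a standard multivariate interpolation lemma, yielding the contradiction.

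For each $i \in [k]$ set $g_i(x_i) := \prod_{a \in A_i}(x_i - a)$, which is monic of degree $|A_i|$ and vanishes on $A_i$. Using the identity $x_i^{|A_i|} = x_i^{|A_i|} - g_i(x_i)$ (where the right-hand side has $x_i$-degree strictly less than $|A_i|$), I would repeatedly rewrite any monomial in which some variable $x_i$ appears with exponent at least $|A_i|$. After finitely many such steps one obtains a polynomial $\bar{p}$ with $\deg_{x_i} \bar{p} < |A_i|$ for every $i$. Since each rewriting step is a congruence modulo $g_i(x_i)$ and $g_i$ vanishes on $A_i$, the values on $A$ are preserved, so $\bar{p}|_A \equiv p|_A \equiv 0$.

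The next step is to track the coefficient of $x_1^{r_1}\cdots x_k^{r_k}$ through the reduction. The key observation is that each replacement of $x_i^{|A_i|}$ inside a monomial strictly decreases its total degree, because $\deg(x_i^{|A_i|} - g_i(x_i)) < |A_i|$. Hence any monomial of $p$ of total degree $d$ contributes only to terms of $\bar{p}$ of total degree at most $d$, with equality precisely when no reduction took place. The hypothesis $\deg p = \sum_i r_i$ puts $x_1^{r_1}\cdots x_k^{r_k}$ in the top degree, and the hypothesis $|A_i| \ge r_i+1$ ensures that no variable in this monomial reaches the reduction threshold; in particular the monomial itself is never rewritten. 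Consequently its coefficient in $\bar{p}$ coincides with its coefficient in $p$ and is therefore nonzero, so $\bar{p} \not\equiv 0$.

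It remains to derive a contradiction from the combination: $\bar{p}$ is a nonzero polynomial satisfying $\deg_{x_i} \bar{p} \le |A_i| - 1$ for all $i$ and vanishing on $A = A_1\times\cdots\times A_k$. This is ruled out by the elementary \emph{grid vanishing lemma}, which I would prove by induction on $k$: writing $\bar{p} = \sum_{j=0}^{|A_k|-1} q_j(x_1,\ldots,x_{k-1})\, x_k^j$, one observes that for each fixed $(a_1,\ldots,a_{k-1}) \in A_1\times\cdots\times A_{k-1}$ the univariate polynomial $\sum_j q_j(a_1,\ldots,a_{k-1})\, x_k^j$ has degree less than $|A_k|$ and $|A_k|$ roots, hence is zero; this forces each $q_j$ to vanish on $A_1\times\cdots\times A_{k-1}$, and the inductive hypothesis finishes the argument. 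The main subtlety, and the step I expect to be the most delicate to phrase cleanly, is the coefficient-tracking in the reduction: it is precisely there that the two hypotheses $\deg p = \sum_i r_i$ and $|A_i| \ge r_i+1$ interact to guarantee that the chosen top monomial survives intact.
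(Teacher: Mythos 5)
The paper does not prove this statement; it is cited directly from Alon \cite{alon2001combinatorial} as a known result, so there is no ``paper's proof'' to compare against. Your argument is correct and is the standard proof of the Combinatorial Nullstellensatz: reduce $p$ modulo the grid-vanishing polynomials $g_i(x_i)=\prod_{a\in A_i}(x_i-a)$, observe that the distinguished monomial $x_1^{r_1}\cdots x_k^{r_k}$ survives the reduction unchanged (its exponents $r_i<|A_i|$ lie below the rewriting thresholds, and it sits in top total degree so no reduction of another monomial can cascade onto it, each rewriting step strictly decreasing total degree), and conclude via the grid vanishing lemma that the reduced polynomial, being nonzero with $\deg_{x_i}<|A_i|$ for all $i$, cannot vanish on all of $A$.
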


\subsection{The Support Polynomials}

 We denote by $\smash{g^{(i)}}$
 the $i$-th column vector of
 a matrix $\smash{G\in\F^{k\times n}}$. Moreover, we consider the vector $\smash{x=(x_1,\ldots, x_k)}$ whose entries are algebraically independent variables over $\F$.

\begin{definition}
The \textbf{support polynomial} associated with
a matrix $G\in \F^{k\times n}$ and a subset $I\subseteq [n]$ is
$$ p_{G,I}(x):=\prod_{i\in I} x \cdot g^{(i)} \in \F[x_1,\ldots,x_k].$$
\end{definition}

In our approach, support polynomials are crucial for the study of minimal codes (taking as~$G$ a generator matrix of an $[n,k,d]_q$ code and as $I$ a subset of a codeword's support). However, for the moment we  focus on general properties of support polynomials that do not necessarily arise from codes. The following result is straightforward and its proof is omitted.

\begin{proposition}\label{prop:properties_polys}
 Let $G\in \F^{k\times n}$ and $I\subseteq [n]$.
 \begin{enumerate}
     \item For every $A \in \GL(k,\F)$ 
      $$p_{AG,I}(x)=p_{G,I}(xA)=(p_{G,I}\circ L_A)(x),$$
      where $L_A$ denotes the linear map associated to the matrix $A$, that is $v \longmapsto vA$.
      \item For every $\tau \in \mathcal S_n$
      $$p_{G,\tau(I)}(x)=p_{GP_\tau,I}(x),$$
      where $P_\tau$ is the permutation matrix associated to $\tau$, such that  $$(v_1,\ldots,v_n)P_\tau=\left(v_{\tau(1)},\ldots,v_{\tau(n)}\right).$$
      \item For every $v \in \F^n$ 
      $$p_{GD_v,I}(x)=\Big(\prod_{i\in I} v_i\Big)p_{G,I}(x),$$
      where $D_v$ denotes the diagonal matrix whose diagonal is $v$.
 \end{enumerate}
 
\end{proposition}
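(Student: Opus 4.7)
The plan is to verify each of the three identities by a direct column-wise inspection of the definition $p_{G,I}(x) = \prod_{i \in I} x \cdot g^{(i)}$. In each case the key observation is to identify the $i$-th column of the transformed matrix, and then substitute; no deeper argument is needed.

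For part (1), I would use that left-multiplication by $A \in \GL(k,\F)$ acts columnwise, so the $i$-th column of $AG$ is $A g^{(i)}$. Substituting into the definition and using the associativity $x \cdot (A g^{(i)}) = (xA) \cdot g^{(i)}$ immediately yields $p_{AG,I}(x) = p_{G,I}(xA)$, which by definition of $L_A$ is $(p_{G,I} \circ L_A)(x)$.

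For part (2), I would first unwind the convention for $P_\tau$: since $(v_1,\ldots,v_n)P_\tau = (v_{\tau(1)},\ldots,v_{\tau(n)})$, applying this row-by-row to $G$ shows that the $j$-th column of $GP_\tau$ is exactly $g^{(\tau(j))}$. Plugging this into the definition of $p_{GP_\tau,I}$ and reindexing the product over $I$ as a product over $\tau(I)$ gives the claimed equality $p_{GP_\tau,I}(x) = p_{G,\tau(I)}(x)$. For part (3), right-multiplication by the diagonal matrix $D_v$ scales the $i$-th column of $G$ by the scalar $v_i$, so each factor $x \cdot (v_i g^{(i)})$ in $p_{GD_v, I}(x)$ splits as $v_i \cdot (x \cdot g^{(i)})$; pulling the $v_i$'s out of the product produces the factor $\prod_{i \in I} v_i$ multiplying $p_{G,I}(x)$.

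I do not foresee any real obstacle: all three identities are bookkeeping consequences of how the three basic column operations — left general-linear action, column permutation, and column scaling — interact with the product defining the support polynomial. The only point to be careful about is fixing the convention for $P_\tau$ so that the induced permutation on columns of $G$ matches the permutation $\tau$ acting on the index set $I$; once this is done, the rest is purely formal.
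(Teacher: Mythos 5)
Your proposal is correct, and since the paper explicitly omits the proof of this proposition as straightforward, your direct column-wise verification of each identity is exactly the intended argument. All three computations — $(AG)^{(i)} = Ag^{(i)}$, $(GP_\tau)^{(j)} = g^{(\tau(j))}$, and $(GD_v)^{(i)} = v_i g^{(i)}$, followed by substitution into the defining product — are carried out correctly, including the reindexing in part (2).
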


We now study a support polynomial in connection with the rowspace of the matrix $G\in \F^{k\times n}$ that defines it. 
We first show how the zeros and nonzeros of support polynomials are related when we choose   matrices with the same rowspace.

 Let $G_1, G_2\in \F^{k\times n}$ be two  matrices such that $\mathrm{rowsp}(G_1)=\mathrm{rowsp}(G_2)$. It is easy to see that there exists $A\in\GL(k,\F)$ such that
 \begin{align*} U_{\Fq^k}(p_{G_1,I}) & =U_{\Fq^k}(p_{G_2,I})\cdot A:=\left\{uA \mid u \in U_{\Fq^k}(p_{G_1,I}) \right\}, \\
 V_{\Fq^k}(p_{G_1,I}) & =V_{\Fq^k}(p_{G_2,I})\cdot A:=\left\{vA \mid v \in V_{\Fq^k}(p_{G_1,I}) \right\}.
 \end{align*}
 Indeed, any matrix $A$ with $G_2=AG_1$ satisfies the desired properties. Moreover, the nonzeros of a support polynomial are closely related to the support of vectors belonging to the rowspace of the defining matrix. This is shown by the following simple result, whose proof is omitted.

\begin{lemma}\label{lem:nonzeros_poly_codewords}
 Let $G\in \F^{k\times n}$ be a matrix. For all
 $I\subseteq [n]$ we have
 $$ U_{\F^k}(p_{G,I})=\left\{u \in \F^k \mid \sH(u G)\supseteq I \right\}.$$
In particular,  $U_{\F^k}(p_{G,I})\neq \emptyset$ if and only if there exists $c\in \mathrm{rowsp}(G)$ such that $\sH(c)\supseteq I$. 
\end{lemma}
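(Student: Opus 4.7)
The plan is to unwind the definitions directly: the result is essentially tautological once one identifies the $i$-th column contraction $x \cdot g^{(i)}$ with the $i$-th coordinate of the vector $xG$. There is no real obstacle here; the usefulness of the lemma lies in the identification it establishes, not in any subtlety of its proof.

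Concretely, I would start by fixing $u \in \F^k$ and evaluating the support polynomial at $u$:
\[
p_{G,I}(u) \;=\; \prod_{i\in I} u \cdot g^{(i)}.
\]
The key observation is that $u \cdot g^{(i)}$ equals the $i$-th entry of the row vector $uG$, i.e., $(uG)_i$. Hence
\[
p_{G,I}(u) \;=\; \prod_{i\in I} (uG)_i,
\]
and this product is nonzero in $\F$ if and only if $(uG)_i \ne 0$ for every $i \in I$, which by definition of the Hamming support is equivalent to $I \subseteq \sH(uG)$. This yields the set equality
\[
U_{\F^k}(p_{G,I}) \;=\; \{u \in \F^k \mid \sH(uG) \supseteq I\}.
\]

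For the ``in particular'' statement, I would simply note that $\mathrm{rowsp}(G) = \{uG \mid u \in \F^k\}$. Therefore $U_{\F^k}(p_{G,I})$ is nonempty if and only if there exists some $u \in \F^k$ with $\sH(uG) \supseteq I$, which is precisely the existence of a codeword $c = uG \in \mathrm{rowsp}(G)$ whose support contains $I$. No further machinery is required.
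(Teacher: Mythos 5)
Your proof is correct, and it is exactly the straightforward unwinding of definitions that the paper has in mind: the paper explicitly omits the proof as ``simple,'' and the identification $u \cdot g^{(i)} = (uG)_i$, together with the fact that a product over a field is nonzero precisely when all factors are nonzero, is the whole argument.
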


\subsection{Minimum Distance of Minimal Codes} \label{comb_md}

In this subsection we investigate the support polynomials of generator matrices of linear codes and their set of zeros. As a corollary of our results, we establish\footnote{While preparing the final version of this manuscript, we realized that the same conjecture has been also established in a recent preprint \cite{tang2019full}, using different methods. The approach developed in this paper also serves to describe the structure of codes that are not necessarily minimal, proving general properties of their maximal codewords.} a conjecture from \cite{alfarano2019geometric}.

We start with the following lemma, whose proof 
directly follows from Lemma~\ref{lem:nonzeros_poly_codewords} and Remark~\ref{rem:maximalminimal}.

\begin{lemma}\label{lem:nonzeros_minimal}
 Let $G \in \Fq^{k\times n}$ be a generator matrix of  an $[n,k]_q$  code $\mC$. Let $c =uG$ be a maximal codeword of $\mC$ and $I:=\sH(c)$. Then
 $$ U_{\Fq^k}(p_{G,I})=\{ \lambda u \mid \lambda \in \Fq^*\}.$$
 In particular, if $\mC$ is a minimal code then the above statement holds for every nonzero codeword.
\end{lemma}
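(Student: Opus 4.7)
The plan is to deduce the result directly from Lemma~\ref{lem:nonzeros_poly_codewords} together with the defining property of maximal codewords and the fact that $G$ has full row rank.

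First I would observe that Lemma~\ref{lem:nonzeros_poly_codewords}, applied to a generator matrix $G$, identifies
\[
U_{\Fq^k}(p_{G,I}) = \{v \in \Fq^k \mid \sH(vG) \supseteq I\}.
\]
For the forward inclusion, I would check that every element of $\{\lambda u \mid \lambda \in \Fq^*\}$ sits in $U_{\Fq^k}(p_{G,I})$. Indeed, for $\lambda \in \Fq^*$ we have $(\lambda u)G = \lambda c$, so $\sH((\lambda u)G) = \sH(c) = I \supseteq I$.

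For the reverse inclusion, I would take an arbitrary $v \in U_{\Fq^k}(p_{G,I})$ and set $c' := vG$. By the description above, $\sH(c') \supseteq I = \sH(c)$. Since $c$ is a maximal codeword of $\mC$, this forces $c' = \mu c$ for some $\mu \in \Fq$. Now $c' \neq 0$ (otherwise $\sH(c') = \emptyset$ could not contain the nonempty set $I$, noting that $c \neq 0$), hence $\mu \in \Fq^*$. Finally, since $G$ has rank $k$, the map $v \mapsto vG$ is injective, so $vG = \mu u G$ yields $v = \mu u$, placing $v$ in $\{\lambda u \mid \lambda \in \Fq^*\}$ as desired.

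The in particular clause then follows immediately from Remark~\ref{rem:maximalminimal}: in a minimal code every nonzero codeword is maximal, so the argument above applies to any nonzero $c \in \mC$. I do not foresee any genuine obstacle; the only point deserving care is verifying that the zero vector of $\Fq^k$ is excluded from $U_{\Fq^k}(p_{G,I})$, which is handled by the nonemptiness of $I = \sH(c)$ when $c \neq 0$.
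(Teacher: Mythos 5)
Your proof is correct and follows exactly the route the paper indicates (which it leaves to the reader): combine Lemma~\ref{lem:nonzeros_poly_codewords} with the maximality of $c$, then use injectivity of $v \mapsto vG$ from the full row rank of $G$, and invoke Remark~\ref{rem:maximalminimal} for the ``in particular'' clause. No discrepancy to report.
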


\begin{theorem}\label{thm:lower_bound_old_conjecture}
 Let $\mC$ be an $[n,k,d]_q$ code, and let $c$ be a maximal codeword. Then $\wH(c) \geq (q-1)(k-1)+1$. In particular, if $\mC$ is minimal then $d\geq (q-1)(k-1)+1$.
\end{theorem}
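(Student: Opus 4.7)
The plan is to apply the Alon--F\"uredi Theorem to the support polynomial $p_{G,I}$, where $G$ is a generator matrix of $\mathcal{C}$, $c=uG$ is the given maximal codeword, and $I:=\sigma(c)$. The key input is Lemma~\ref{lem:nonzeros_minimal}, which, by maximality of $c$, pins down the nonzero set to $U_{\F_q^k}(p_{G,I})=\{\lambda u : \lambda \in \F_q^*\}$. Thus $|U_{\F_q^k}(p_{G,I})|=q-1$, and simultaneously $p_{G,I}$ is not identically zero on $\F_q^k$ (witnessed by $u$), so Theorem~\ref{thm:AlonFuredi} is applicable.

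Applying Alon--F\"uredi on the grid $A=\F_q^k$ (every $n_i=q\geq 2$), and writing the degree of the reduction $\bar p$ of $p_{G,I}$ modulo $(x_1^q-x_1,\ldots,x_k^q-x_k)$ in the canonical form $\deg \bar p = (k-s)(q-1)+\ell$ with $1\leq s\leq k$ and $1\leq\ell\leq q-1$, one obtains
\[
q-1 \;=\; |U_{\F_q^k}(p_{G,I})| \;\geq\; (q-\ell)\,q^{\,s-1}.
\]
Since $q-\ell\geq 1$, the right-hand side is at least $q$ whenever $s\geq 2$, contradicting the bound. Hence $s=1$, and we conclude $\deg\bar p = (k-1)(q-1)+\ell \geq (k-1)(q-1)+1$.

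To finish, combine this with the obvious inequality $\deg p_{G,I} \geq \deg \bar p$ (reduction can only decrease degree) and the fact that $p_{G,I}$ is a product of $|I|=\wH(c)$ linear forms, so $\deg p_{G,I}=\wH(c)$. This yields $\wH(c)\geq (k-1)(q-1)+1$. The ``in particular'' statement then follows by Remark~\ref{rem:maximalminimal}: in a minimal code every nonzero codeword is maximal, so the bound applies to every nonzero codeword and hence to the minimum weight.

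The only subtle point will be ensuring the Alon--F\"uredi hypotheses are met cleanly: one must verify that $p_{G,I}$ does not vanish identically on $\F_q^k$ (immediate from Lemma~\ref{lem:nonzeros_minimal}, since $u$ lies in the nonzero set), and that the passage from $p_{G,I}$ to its reduction $\bar p$ is degree-monotone (which is standard, as reduction modulo $x_i^q-x_i$ only lowers exponents). The trivial case $k=1$ gives the vacuous bound $\wH(c)\geq 1$ and can be dismissed in one line. No deeper obstacle arises, and the argument is essentially a one-shot application of the Alon--F\"uredi machinery to the tight count of nonzeros dictated by maximality.
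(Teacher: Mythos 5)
Your proposal is correct and matches the paper's own proof essentially step for step: same support polynomial $p_{G,I}$, same appeal to Lemma~\ref{lem:nonzeros_minimal} to count the nonzeros as $q-1$, same application of Alon--F\"uredi forcing $s=1$, and the same degree comparison $\wH(c)=\deg p_{G,I}\geq\deg\bar p_{G,I}=(q-1)(k-1)+\ell$. Nothing in your argument diverges from the paper's reasoning.
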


\begin{proof}
 Let $c=(c_1,\ldots,c_n)\in\mC$ be a maximal codeword of weight $w$ and let $I:=\sH(c)$, i.e., $c_i\in \Fq^*$ if and only if $i\in I$. Take a generator matrix $G\in \Fq^{k\times n}$ for $\mC$ and consider the polynomial $p_{G,I}(x)\in\Fq[x_1,\ldots,x_k]$.
 Observe that  $p_{G,I}$  does not vanish identically on $\Fq^k$. Indeed, let  $u \in \Fq^k$ be the vector such that $u G=c$. Then $p_{G,I}(u)=\prod_{i\in I}c_i\neq 0$. This also ensures that $\deg p_{G,I}=w$. 
 Since $c$ is a maximal codeword, by Lemma~\ref{lem:nonzeros_minimal} we have $U_{\Fq^k}(p_{G,I})=\{\alpha \lambda \mid \alpha \in \Fq^*\}$,  which has cardinality $q-1$. 
 
On the other hand, let $\bar{p}_{G,I}$ denote the reduction of the polynomial $p_{G,I}$ modulo the ideal $(\{x_i^q-x_i \mid i \in [k]\})$. By Theorem~\ref{thm:AlonFuredi} we have
$$ |U_{\Fq^k}(p_{G,I})|\geq(q-\ell)q^{s-1},$$
where $\ell$ and $s$ are the unique integers satisfying
  $\deg \bar{p}_{G,I}=(q-1)(k-s) + \ell$, with $1\leq s \leq k$ and $1 \leq \ell \leq q-1$.
  
  Thus, combining this with the exact value of $|U_{\Fq^k}(p_{G,I})|$, we obtain 
  $q-1=|U_{\Fq^k}(p_{G,I})|\geq (q-\ell)q^{s-1}$, from which we deduce
  $s=1$.  
  Therefore, 
  \begin{equation*}
      w=\deg p_{G,I}\geq \deg\bar{p}_{G,I}=(q-1)(k-1)+\ell \geq (q-1)(k-1)+1. \qedhere
  \end{equation*}
\end{proof}

\begin{remark}
The Alon-F\"uredi Theorem (Theorem~\ref{thm:AlonFuredi}) gives a lower bound on the number of nonzeros of a multivariate polynomial in a finite grid in terms of the degree of the polynomial and the size of the grid. This result has been used in coding theory for deriving the minimum distance of generalized Reed-Muller codes; see e.g. \cite{geil2013weighted, lopez2014affine}. 
It is interesting to observe that in our Theorem~\ref{thm:lower_bound_old_conjecture}
the Alon-F\"uredi Theorem is applied in the ``opposite'' direction, i.e., 
we use it to derive a lower bound on the degree of the support polynomial associated to a maximal codeword, knowing the number of its nonzeros.
\end{remark}

\subsection{Maximal Codewords in Linear Codes}

In this subsection
we use support polynomials 
to study the structure of maximal codewords in a linear code
$\mC$. In particular, we show that for any maximal codeword $c \in \mC$ 
there exist several codewords whose support contains a large subset of the support of $c$. This property will be crucial for deriving a lower bound on the length of minimal codes in Subsection~\ref{sub:len}.

For $v=(v_1,\ldots,v_k) \in \Fq^k$, define 
$$f_v(x):=\prod_{i=1}^k\Big(\prod_{s \in \Fq \setminus \{v_i\}}(x_i-s)\Big).$$
Next, consider the ideal $I_q:=(x_1^q-x_1,\ldots, x_k^q-x_k)$ and denote by $\bar{f}$ the reduction of a polynomial $f\in \Fq[x_1,\ldots, x_k]$ modulo $I_q$. It is easy to check that for every $v \in \Fq^k$ we have $\bar{f_v}=f_v$. One can also easily prove that the set 
$\{f_v \mid v \in \Fq^k\}$
is an $\Fq$-basis for the space $\Fq[x_1,\ldots,x_k]/I_q$. Moreover, regarding the polynomials $f_v$'s as maps from $\Fq^k$ to $\Fq$, the set $\{f_v \mid v \in \Fq^k\}$ is an $\Fq$-basis of $\{\varphi:\Fq^k \longrightarrow \Fq \}$. This is due to the following well-known result.

\begin{proposition}
The evaluation map on $\Fq[x_1,\ldots,x_k]$ induces the isomorphism of $\Fq$-vector spaces 
\begin{equation}\label{eq:isomorphism} \Fq[x_1,\ldots,x_k]/I_q \cong \{\varphi:\Fq^k \longrightarrow \Fq \}. \end{equation}
In particular, for every  $p \in \Fq[x_1,\ldots,x_k]$ there exist unique $\mu_v \in \Fq$ for $v \in U_{\Fq^k}(p)$, such that 
$$ \bar{p}=\sum_{v \in U_{\Fq^k}(p)} \mu_v f_v.$$
\end{proposition}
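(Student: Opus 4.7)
My plan is to prove the isomorphism by a standard evaluation/dimension argument and then read off the expansion using the family $\{f_v\}$ as a basis.

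First, I would introduce the evaluation ring homomorphism $\mathrm{ev}\colon \Fq[x_1,\ldots,x_k] \to \{\varphi:\Fq^k\to\Fq\}$ given by $p\mapsto (v\mapsto p(v))$. By Fermat's little theorem $a^q=a$ for every $a\in\Fq$, so each generator $x_i^q-x_i$ of $I_q$ lies in $\ker(\mathrm{ev})$ and $\mathrm{ev}$ descends to a well-defined $\Fq$-linear map $\overline{\mathrm{ev}}$ on the quotient. Iteratively reducing any polynomial modulo the relations $x_i^q-x_i$ shows that every class in $\Fq[x_1,\ldots,x_k]/I_q$ admits a representative whose multidegree is strictly less than $(q,\ldots,q)$; hence the quotient has dimension at most $q^k$, matching $\dim_{\Fq}\{\varphi:\Fq^k\to\Fq\}=q^k$.

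Next, I would establish surjectivity of $\overline{\mathrm{ev}}$ by evaluating $f_v(w)$ explicitly for $v,w\in\Fq^k$. If $w\neq v$, then $w_i\neq v_i$ for some $i$, so the factor $(x_i-w_i)$ occurs in $f_v$ and $f_v(w)=0$. If $w=v$, then since $\prod_{s\in\Fq}(x-s)=x^q-x$, the substitution $y=v_i-s$ gives
\[
\prod_{s\in\Fq\setminus\{v_i\}}(v_i-s) \;=\; \prod_{y\in\Fq^*} y \;=\; -1,
\]
(the last equality being the finite-field analogue of Wilson's theorem, which can also be read off from the derivative of $x^q-x$ at $v_i$). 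Consequently $f_v(v)=(-1)^k$, so $(-1)^k f_v$ is the indicator of $\{v\}$, and the $q^k$ functions $\overline{\mathrm{ev}}(f_v)$ form the standard basis of the codomain. This proves surjectivity; combined with the dimension bound on the quotient it forces $\overline{\mathrm{ev}}$ to be an isomorphism, and the $f_v$ to be an $\Fq$-basis of $\Fq[x_1,\ldots,x_k]/I_q$.

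Finally, for the explicit expansion I would use this basis. Every $p$ yields a unique identity $\bar p=\sum_{v\in\Fq^k}\mu_v f_v$, and evaluating at $w$ using the delta property gives $p(w)=(-1)^k\mu_w$, whence $\mu_w=(-1)^k p(w)$. In particular $\mu_w\neq 0$ exactly when $w\in U_{\Fq^k}(p)$, so the full basis sum collapses to the claimed expansion indexed by $U_{\Fq^k}(p)$, with uniqueness inherited from the basis property. The whole argument is dimension-theoretic; the only delicate point is the evaluation $\prod_{s\neq v_i}(v_i-s)=-1$, which is a one-line computation and not a real obstacle.
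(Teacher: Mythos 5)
The paper states this proposition without proof, citing it as a well-known result; your argument is the standard one and it is correct. The three ingredients you use — that $I_q$ is killed by evaluation, that reduced monomials give the dimension bound $q^k$ on the quotient, and that the $f_v$'s evaluate to the (scaled) indicator functions $f_v(w)=(-1)^k\delta_{v,w}$ via Wilson's theorem in $\Fq^*$ — are exactly what is needed, and the final passage from the full basis expansion $\bar p=\sum_{v\in\Fq^k}\mu_v f_v$ with $\mu_v=(-1)^k p(v)$ to the restricted sum over $U_{\Fq^k}(p)$ is immediate.
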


\begin{proposition}\label{prop:polys_form}
 Let $\mC$ be an $[n,k]_q$  code and let $c=(c_1,\ldots, c_n) \in \mC$ be a maximal codeword of $\mC$ with weight $w$ and support $I:=\sH(c)$. Let $w_1$ be the unique integer in $[q-1]$ such that $w_1 \equiv w \mod (q-1)$. Then, for any $A \in \GL(k,q)$ such that the first row of $A^{-1}G$ is equal to $c$, we have
 $\bar{p}_{G,I}(x)=p_c(xA),$
 where 
 $$p_c(x)=\Big(\prod_{i=1}^wc_i\Big)x_1^{w_1}\prod_{i=2}^k(1-x_i^{q-1}).$$
\end{proposition}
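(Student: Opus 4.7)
The plan is to split the argument into two stages. First, I would reduce to the case where $A = I_k$, i.e., $c$ is literally the first row of $G$, using a change of generator matrix; second, I would verify the identity by evaluating both sides at every point of $\Fq^k$ and invoking the injectivity of the evaluation map on reduced polynomials (the isomorphism $\Fq[x_1,\ldots,x_k]/I_q \cong \{\varphi : \Fq^k \to \Fq\}$).

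For the reduction step, set $G' := A^{-1} G$, so that the first row of $G'$ equals $c$ and $G = A G'$. Proposition~\ref{prop:properties_polys}(1) gives $p_{G,I}(x) = p_{G', I}(xA)$ as polynomials. Because $v \mapsto vA$ is a bijection of $\Fq^k$, this identity descends to one on the level of functions on $\Fq^k$, and therefore, via the preceding isomorphism, to one between classes in $\Fq[x_1,\ldots,x_k]/I_q$. Consequently, the desired identity $\bar{p}_{G, I}(x) = p_c(xA)$ reduces, under the substitution $y = xA$, to proving $\bar{p}_{G', I}(y) = p_c(y)$ in the case where the first row of $G'$ is $c$.

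Next, since $c = e_1 G'$ is maximal by hypothesis, Lemma~\ref{lem:nonzeros_minimal} yields $U_{\Fq^k}(p_{G', I}) = \{\lambda e_1 : \lambda \in \Fq^*\}$, and at each such point
$$p_{G', I}(\lambda e_1) = \prod_{i \in I} (\lambda e_1) \cdot g'^{(i)} = \prod_{i \in I} \lambda c_i = \lambda^w \prod_{i \in I} c_i,$$
using that $(g'^{(i)})_1 = c_i$. For the candidate $p_c$, observe that $p_c(v) \neq 0$ forces $v_1 \neq 0$ (by the factor $x_1^{w_1}$ with $w_1 \geq 1$) and $v_i = 0$ for every $i \geq 2$ (since $1 - v_i^{q-1}$ vanishes precisely for $v_i \in \Fq^*$). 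Hence $p_c$ vanishes outside $\{\lambda e_1 : \lambda \in \Fq^*\}$, and $p_c(\lambda e_1) = \lambda^{w_1}\prod_{i \in I} c_i$. The congruence $w \equiv w_1 \pmod{q-1}$ together with $\lambda^{q-1} = 1$ yields $\lambda^w = \lambda^{w_1}$, so $p_{G', I}$ and $p_c$ coincide as functions on $\Fq^k$.

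Finally, $p_c$ has degree $w_1 \leq q-1$ in $x_1$ and degree $q-1$ in each remaining variable, so it already lies in canonical reduced form modulo $I_q$. Injectivity of the evaluation map on reduced polynomials then gives $\bar{p}_{G', I} = p_c$, and pulling back via $y = xA$ completes the proof. The main delicate point is precisely this change of variables in the reduction step: the substitution $x \mapsto xA$ generally does not preserve canonical reduced form, so all intermediate equalities must be interpreted modulo $I_q$ (equivalently, as identities of functions on $\Fq^k$), and only at the very last step does one pass back to a polynomial identity.
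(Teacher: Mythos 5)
Your proof is correct and follows essentially the same approach as the paper's: reduce to the case where $c$ is the first row of the generator matrix via Proposition~\ref{prop:properties_polys}, compare $p_{G',I}$ and $p_c$ as functions on $\Fq^k$ using Lemma~\ref{lem:nonzeros_minimal} together with the congruence $\lambda^w=\lambda^{w_1}$, and conclude via the isomorphism~\eqref{eq:isomorphism} and the fact that $p_c$ is already reduced modulo $I_q$. Your closing caveat about the substitution $x\mapsto xA$ not preserving canonical reduced form is well placed and in fact sharpens the paper's own (slightly loose) treatment of the general-$A$ case, where the stated equality $\bar{p}_{G,I}(x)=p_c(xA)$ should properly be read as an identity in $\Fq[x_1,\ldots,x_k]/I_q$, i.e.\ $\bar{p}_{G,I}(x)=\overline{p_c(xA)}$.
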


\begin{proof}
 We first prove the statement in the case where the first row of $G$ is equal to $c$. 
 Observe that $\bar{p}_c=p_c$, that is, the polynomial $p_c$ is already reduced modulo $I_q$. Therefore, by the  isomorphism given in \eqref{eq:isomorphism}, we only need to show that $p_{G,I}(v)=p_c(v)$ for every $v \in \Fq^k$. By definition of $p_c$ we have
$$p_c(v)=\begin{cases} \lambda^{w_1}\prod\limits_{i=1}^w c_i & \mbox{ if } v=\lambda e_1, \\
0 & \mbox{ otherwise.}\end{cases}$$
On the other hand, by the choice of $G$ and Lemma~\ref{lem:nonzeros_minimal} we have $$p_{G,I}(\lambda e_1)=\prod_{i=1}^w(\lambda c_i)=\lambda^{w_1}\prod_{i=1}^w c_i, $$
where the last inequality follows using  the identity $\lambda^q=\lambda$. Moreover,
 $p_{G,I}(v)=0$ for every $v \notin \{\lambda e_1 \mid \lambda \in \Fq^*\}$.

The general case follows from the previous one.  We first transform  $G$ into $A^{-1}G$, where the first row of $A^{-1}G$ is equal to $c$. This implies that $p_{A^{-1}G,I}(x)=p_c(x)$. Then, using 
Proposition~\ref{prop:properties_polys}, we find  $p_{G,I}(x)=p_{A^{-1}G,I}(xA)=p_c(xA).$
\end{proof}

\begin{notation}
In the remainder of the section
we write  $x=(x_1,\ldots,x_k)$ and 
for $\alpha=(\alpha_1,\ldots, \alpha_k) \in \N^k$
we denote by $x^\alpha$ the monomial $x_1^{\alpha_1}\cdots x_k^{\alpha_k}$. Moreover, we let $\|\alpha\|:=\alpha_1+\ldots+\alpha_k$.  Finally, for a polynomial $p(x) \in \Fq[x_1,\ldots,x_k]$ and a monomial $x^{\alpha}$, we denote by $[x^{\alpha}]p(x)$ the coefficient of the monomial $x^{\alpha}=x_1^{a_1}\cdots x_k^{a_k}$ in $p(x)$.
\end{notation}

The following result on maximal codewords will be crucial in the next subsection for deriving a lower bound on the length of a minimal code.

\begin{theorem}\label{thm:codeword_support_intersection}
 Let $\mC$ be an $[n,k]_q$  code and let $c=(c_1,\ldots, c_n) \in \mC$ be a maximal codeword. 
 For every $j \in \sH(c)$ there exist $I_j \subseteq \sH(c)\setminus\{j\}$ of cardinality $(q-1)(k-1)$ and  a codeword $z \in \mC$ such that $\sH(z) \cap \sH(c) \supseteq I_j$.
\end{theorem}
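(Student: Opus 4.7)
The plan is to use the support polynomial machinery to produce, for every $j \in \sH(c)$, a codeword $z$ that is \emph{not} a scalar multiple of $c$ and whose support contains a prescribed subset of $\sH(c)\setminus\{j\}$ of cardinality $(q-1)(k-1)$. Otherwise the statement would be immediate with $z=c$ from Theorem~\ref{thm:lower_bound_old_conjecture}, and would contain no content beyond it. Fix a generator matrix $G \in \F_q^{k\times n}$ of $\mC$ and let $u \in \F_q^k$ be such that $c = uG$; the scalar multiples of $c$ then correspond precisely to the $q-1$ nonzero vectors $\lambda u$ with $\lambda \in \F_q^*$.

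By Theorem~\ref{thm:lower_bound_old_conjecture}, $\wH(c) \geq (q-1)(k-1) + 1$, so $|\sH(c)\setminus\{j\}| \geq (q-1)(k-1)$ and I may fix an arbitrary $I_j \subseteq \sH(c)\setminus\{j\}$ with $|I_j| = (q-1)(k-1)$. The associated support polynomial $p_{G,I_j}(x)$ has degree $(q-1)(k-1)$, and $p_{G,I_j}(u) = \prod_{i \in I_j} c_i \neq 0$ since $I_j \subseteq \sH(c)$. Hence $p_{G,I_j}$ is not identically zero on $\F_q^k$.

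The crucial step is to apply the Alon--F\"uredi Theorem (Theorem~\ref{thm:AlonFuredi}) on the grid $\F_q^k$, where all $n_i = q$. The reduced polynomial $\bar{p}_{G,I_j}$ has degree at most $(q-1)(k-1) = (k-2)(q-1) + (q-1)$. Writing $\deg \bar{p}_{G,I_j} = (k-s)(q-1) + \ell$ with $1 \leq \ell \leq q-1$, the constraint $\ell \geq 1$ rules out $s = 1$ and forces $s \geq 2$. The Alon--F\"uredi bound then yields
\[ |U_{\F_q^k}(p_{G,I_j})| \geq (q - \ell)\, q^{s-1} \geq q. \]

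Since any nonzero multiple $\lambda u$ satisfies $p_{G,I_j}(\lambda u) = \lambda^{(q-1)(k-1)} \prod_{i \in I_j} c_i = \prod_{i \in I_j} c_i \neq 0$ (using $\lambda^{q-1} = 1$ for $\lambda \in \F_q^*$), all $q-1$ such multiples lie in $U_{\F_q^k}(p_{G,I_j})$. But $|U_{\F_q^k}(p_{G,I_j})| \geq q > q-1$, so there exists $v \in U_{\F_q^k}(p_{G,I_j})$ with $v \notin \{\lambda u : \lambda \in \F_q\}$. Setting $z := vG$, Lemma~\ref{lem:nonzeros_poly_codewords} yields $\sH(z) \supseteq I_j$, hence $\sH(z) \cap \sH(c) \supseteq I_j$, and $z$ is not a scalar multiple of $c$. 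The main subtlety is exactly this ``$q$ versus $q-1$'' margin: it depends critically on $\ell \geq 1$ in the Alon--F\"uredi decomposition when the polynomial degree sits exactly at the threshold $(q-1)(k-1)$.
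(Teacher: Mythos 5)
Your proof is correct and takes a genuinely different, more streamlined route than the paper's. Both arguments rest on Alon--F\"uredi--type tools, but the paper's proof works first with the full support polynomial $p_{G,I}$ for $I=\sigma(c)$: it computes $\bar p_{G,I}=p_c$ via Proposition~\ref{prop:polys_form}, extracts the coefficient of a carefully chosen monomial $x^\beta$, expands this coefficient as a sum over the family of admissible $(q-1)(k-1)$-subsets $L$ to single out one particular $I_j$ on which a residual coefficient survives, and only then passes to $p_{G,I_j}$ and invokes the Combinatorial Nullstellensatz; it also needs a case distinction on $w\le(q-1)k$ versus $w>(q-1)k$, because the degree of $p_{G,I}$ may exceed the range where the monomial extraction is direct. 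You instead apply Theorem~\ref{thm:AlonFuredi} to $p_{G,I_j}$ for an \emph{arbitrary} admissible $I_j$: since $\deg\bar p_{G,I_j}\le|I_j|=(q-1)(k-1)$, the decomposition forces $s\ge 2$ and hence $|U_{\F_q^k}(p_{G,I_j})|\ge q$, one more than the $q-1$ scalar multiples of $u$, so some $v\notin\langle u\rangle$ satisfies $\sigma(vG)\supseteq I_j$. (The edge case $\deg\bar p_{G,I_j}=0$, where the Alon--F\"uredi decomposition is vacuous, gives $|U|=q^k\ge q$ anyway.) This bypasses both the coefficient bookkeeping and the case split, and in fact proves the strictly stronger statement that \emph{every} $(q-1)(k-1)$-subset of $\sigma(c)\setminus\{j\}$ may serve as $I_j$. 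What the paper's longer route buys is a more structured witness --- the representing vector $(0,v)$ has first coordinate zero by construction rather than by a cardinality count --- but for the application in Theorem~\ref{thm:lower_bound_length} either version of the result suffices.
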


\begin{proof}
 Let $c\in\mC$ be a nonzero codeword with support $I:=\sH(c)$ and weight $w=(q-1)(k-1)+w_1$. By Theorem~\ref{thm:lower_bound_old_conjecture} we have $w_1\geq 1$.
 
Assume first that $w\leq (q-1)k$, which implies $1\leq w_1 \leq q-1$. We choose a generator 
 matrix~$G$ for $\mC$ whose first row is equal to $c$, and assume that $c_i=1$ for every $i \in I$. This can be done without loss of generality, up to replacing the code with an equivalent one. By Proposition~\ref{prop:polys_form} we have
 $$\bar{p}_{G,I}(x)=p_c(x)=x_1^{w_1}\prod_{i=2}^k(1-x_i^{q-1}).$$
 Let $j\in I$ and assume that the $j$-th column of $G$ is $(1,0,\ldots,0)^\top$. Define $\mL_{I,j}:=\{L\subseteq I : j\notin~L, \,  |L|=(q-1)(k-1) \}$ and $\beta:=(w_1,q-1,q-1,\ldots,q-1)$.
 We have
 \begin{align*} (-1)^{k-1} &=[x^{\beta}]\bar{p}_{G,I}(x) \\
 &=[x^{\beta}]p_{G,I}(x)\\
 &=\sum_{L\in \mL_{I,j}}[x_2^{q-1}\cdots x_k^{q-1}]p_{G,L}(x).
 \end{align*}
 The first equality follows from direct inspection of $p_c(x)$. The second equality is due to the fact that the degree $p_{G,I}$ is equal to $(q-1)(k-1)+w_1$, which is also the degree of $\bar{p}_{G,I}$. The third equality follows  from the fact that the coefficients of  $x_1$ in the matrix $G$ are all equal to~$1$.
 Therefore, there exists $I_j\in\mL_{I,j}$ such that $[x_2^{q-1}\cdots x_k^{q-1}]p_{G,I_j}(x)\neq 0$. Let $x':=(x_2,\ldots,x_k)$ and consider the polynomial $f(x'):=p_{G,I_j}(0,x_2,\ldots,x_k)$. This polynomial has degree $|I_j|=(q-1)(k-1)$ and $[x_2^{q-1}\cdots x_k^{q-1}]f(x')=[x_2^{q-1}\cdots x_k^{q-1}]p_{G,I_j}(x)\neq 0$. Hence, by Theorem~\ref{thm:combinatorial_nullstellensatz}, there exists $v\in\Fq^{k-1}$ such that $f(v)=p_{G,I_j}(0,v) \neq 0$. By Lemma~\ref{lem:nonzeros_poly_codewords}, this implies that the codeword $z:=(0,v)G\in\mC$ satisfies $\sH(z)\supseteq I_j$.

 Now assume that $w>(q-1)k$, from which $w_1\geq q$. Let us write $w_1=a(q-1)+b$ with $1\leq b \leq q-1$. Since $w>(q-1)k$, we have $a\geq 1$. 
 Denote the vector $\beta:=(b,q-1,q-1,\ldots,q-1)$. Consider the set $T:=\{\alpha \in \N^k : \|\alpha\|=a(q-1), \alpha_i \equiv 0 \mod (q-1) \mbox{ for every } i\in[k] \}$. Then
  \begin{align*} (-1)^{k-1} &=[x^\beta]\bar{p}_{G,I}(x) \\
 &= \sum_{\alpha\in T}[x^{\beta+\alpha}]p_{G,I}(x).
 \end{align*}
 This means that there exists  $\gamma \in T$ such that $[x^{\beta+\gamma}]p_{G,I}(x)\neq 0$. 
 Define $\mL_{I,j}^{(\gamma)}:=\{L \subseteq I : |L|=w-b-\gamma_1,j\notin L\}$, $\gamma':=(0,\gamma_2,\ldots,\gamma_k)$, and $\beta':=(0,q-1,\ldots,q-1)$.
We have  
$$[x^{\beta+\gamma}]p_{G,I}(x)=\sum_{L\in \mL_{I,j}^{(\gamma)}}[x^{\beta'+\gamma'}]p_{G,L}(x)$$
and there exists $K\in\mL_{I,j}^{(\gamma)}$ such that $[x^{\beta'+\gamma'}]p_{G,K}(x)\neq 0$.
At this point we can consider the set $\mathcal X_K:=\{M\subseteq K \, : \, |M|=(q-1)(k-1) \}$ and write 
 $$[x^{\beta'+\gamma'}]p_{G,K}(x)=\sum_{M\in \mathcal X_K}\lambda_M\big([x^{\beta'}]p_{G,M}(x)\big)$$
 for some $\lambda_M\in \Fq$. 
Since this sum is nonzero, there exists $M$ such that $[x^{\beta'}]p_{G,M}(x)\neq 0$. As in the previous case, we use Theorem ~\ref{thm:combinatorial_nullstellensatz} and Lemma~\ref{lem:nonzeros_poly_codewords} to deduce that there exists a codeword $z \in \mC$ such that $\sH(z)\supseteq M$.
\end{proof}

\subsection{The Length of Minimal Codes}
\label{sub:len}

As an application of Theorem~\ref{thm:codeword_support_intersection}, we derive the following lower bound on the length of a minimal code.

\begin{theorem}\label{thm:lower_bound_length}
 Let $\mC$ be an $[n,k,d]_q$ minimal code. We have $n \geq (q+1)(k-1)$.
\end{theorem}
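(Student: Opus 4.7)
The plan is to couple Theorem~\ref{thm:codeword_support_intersection} with a pigeonhole argument on the ratios between a minimum-weight codeword and the auxiliary codewords produced by that theorem, and then to combine the resulting lower bound on $\wH(z_j)$ with Proposition~\ref{prop:weights}.

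First I would fix a minimum-weight codeword $c \in \mC$ (so $\wH(c)=d$; it is also maximal by Remark~\ref{rem:maximalminimal}), set $I:=\sH(c)$ and $T:=[n]\setminus I$, and pick some $j \in I$. Applying Theorem~\ref{thm:codeword_support_intersection} yields a set $I_j\subseteq I\setminus\{j\}$ with $|I_j|=(q-1)(k-1)$ and a codeword $z_j\in\mC$ satisfying $I_j \subseteq A := \sH(z_j)\cap I$; crucially, the proof of that theorem constructs $z_j$ as $(0,v)G$ for a generator matrix $G$ whose $j$-th column is $e_1$, which forces $j \notin \sH(z_j)$.

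Next comes the pigeonhole step. The ratios $\mu_i := z_j(i)/c(i)$ for $i \in A$ all lie in $\F_q^*$, and $|A|\geq (q-1)(k-1)$, so some value $\mu\in\F_q^*$ is attained by $m\geq k-1$ of the $\mu_i$'s. I would then consider the codeword $c':=z_j-\mu c \in \mC$. It is nonzero, because $z_j = \mu c$ with $\mu\neq 0$ would imply $z_j(j)=\mu c(j)\neq 0$, contradicting $j\notin\sH(z_j)$. A direct computation, splitting $[n]=A\sqcup(I\setminus A)\sqcup T$ and using that $c$ vanishes on $T$ while $z_j$ vanishes on $I\setminus A$, gives
\[
\wH(c') \;=\; (|A|-m) + (d-|A|) + |\sH(z_j)\cap T| \;=\; d - m + |\sH(z_j)\cap T|.
\]
From the minimum-distance bound $\wH(c')\geq d$ it then follows that $|\sH(z_j)\cap T|\geq m \geq k-1$.

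Combining the two estimates,
\[
\wH(z_j) \;=\; |A| + |\sH(z_j)\cap T| \;\geq\; (q-1)(k-1) + (k-1) \;=\; q(k-1).
\]
Since $z_j$ is minimal (as $\mC$ is), Proposition~\ref{prop:weights} forces $\wH(z_j)\leq n-k+1$, and putting the two inequalities side by side gives the desired $n \geq (q+1)(k-1)$. The one delicate point I anticipate is the bookkeeping in the weight formula for $c'$ — one must carefully use that each $i \in I\setminus A$ (in particular $j$ itself) contributes to $\sH(c')$ because $c(i)\neq 0$ while $z_j(i)=0$ there, which is what makes the coefficient of $m$ in the formula equal to $1$ and thus produces the clean bound $|\sH(z_j)\cap T|\geq k-1$ rather than something strictly weaker.
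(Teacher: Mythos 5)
Your proposal is correct and takes essentially the same route as the paper: apply Theorem~\ref{thm:codeword_support_intersection} to a minimum-weight codeword, pigeonhole on the nonzero ratios $z_j(i)/c(i)$ to find a value $\mu$ attained at least $k-1$ times, subtract $\mu c$ and use $\wH(z_j-\mu c)\geq d$ to force $\wH(z_j)\geq q(k-1)$, then finish with Proposition~\ref{prop:weights}. You are in fact a bit more careful than the paper in explicitly verifying $z_j-\mu c\neq 0$ (by extracting $j\notin\sH(z_j)$ from the construction inside the proof of Theorem~\ref{thm:codeword_support_intersection}), a point the paper's version leaves implicit.
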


\begin{proof}
  Let $c \in \mC$ be a codeword of minimum weight $d$ with support $I:=\sH(c)$. Up to considering an equivalent code, we can assume $c_i=1$ for every $i \in I$. Since $c$ is in particular a maximal codeword of $\mC$, by Theorem~\ref{thm:codeword_support_intersection} there exists a codeword $z \in \mC$ such that $|I \cap \sH(z)|\geq (q-1)(k-1)$. Let $J:=I\cap \sH(z)$ and for every $\lambda \in \Fq^*$ define $J_{\lambda}:=\{j \in J \mid z_j=\lambda\}$. Clearly, $\smash{J=\bigcup_{\lambda \in \Fq^*} J_\lambda}$ and the union is disjoint. Thus by generalized pigeonhole principle there exists $\lambda' \in \Fq^*$ such that $$|J_{\lambda^\prime}|\geq \left\lceil\frac{|J|}{q-1}\right\rceil\geq  k-1.$$
  Now consider the codeword $z-\lambda'c$. Its support is 
  $\sH(z-\lambda'c)=(\sH(c)\cup \sH(z))\setminus J_{\lambda'}$ and 
  \begin{align*}\wH(z-\lambda'c)&=|\sH(c)|+|\sH(z)|-|J|-|J_{\lambda'}| \\
  & \leq d+\wH(z)-q(k-1).
  \end{align*}
  Combining this with $\wH(z-\lambda'c)\geq d$ we obtain $\wH(z) \geq q(k-1)$. Furthermore, by 
  Proposition~\ref{prop:weights} we have $\wH(z)\leq n-k+1$, from which we finally obtain $n \geq (q+1)(k-1)$.
\end{proof}

\begin{remark}
The lower bound of Theorem~\ref{thm:lower_bound_length} is an improvement on the bound in Theorem~\ref{ttt}.
Indeed, we have  
\begin{equation} \label{toprove}
(q+1)(k-1) \geq \sum_{i=0}^{k-1} \left\lceil \frac{(q-1)(k-1)+1}{q^i}\right\rceil.
\end{equation}
We do not go into the details of the proof.
\end{remark}

\begin{remark}
  Observe that Theorem~\ref{thm:lower_bound_length} is an improvement on the bound of 
  Theorem~\ref{thm:sizeNfold},  since it does not require the extra assumption that $k\leq q+1$. 
\end{remark}

We conclude this section with a detailed example building on \cite[Example~5.11]{alfarano2019geometric}.

\begin{example}
  We fix $q=3$, $k=4$ and take the minimal $[14,4,7]_3$ code $\mC$ whose generator matrix is
$$G:=\left( \begin{array}{cccccccccccccc}
0 & 0 & 0 & 0 & 0 & 0 & 0 & 1 & 2 & 1 & 1 & 1 & 2 & 1\\
1 & 1 & 1 & 0 & 0 & 0 & 0 & 0 & 0 & 0 & 1 & 2 & 1 & 1\\
0 & 1 & 2 & 1 & 1 & 1 & 0 & 0 & 0 & 0 & 0 & 0 & 2 & 2\\
0 & 0 & 0 & 0 & 1 & 2 & 1 & 1 & 1 & 0 & 0 & 0 & 1 & 2\\
\end{array} \right).$$
Let $I:=\{8,9,\ldots, 14\}$ be the support of the codeword $c$ given by the first row of $G$ and let $x=(x_1,x_2,x_3,x_4)$. We compute the associated support polynomial
$$ p_{G,I}(x)= x_1(x_3^2-x_1^2)(x_1^2-x_4^2)((x_4-x_2)^2-(x_3-x_1)^2).$$
An easy calculation shows that the reduction of $p_{G,I}(x)$ modulo $I_3=(x_1^3-x_1, x_2^3-x_2, x_3^3-x_3, x_4^3-x_4)$ is
$$ \bar{p}_{G,I}(x)=x_1(1-x_2^2)(1-x_3^2)(1-x_4^2)=p_c(x),$$
as we can also deduce from Proposition~\ref{prop:polys_form}. 

Moreover, since $\mC$ is a minimal code, we can actually see that for every $j \in I$ there exists a codeword $z^{(j)} \in \mC$ such that $\sH(z^{(j)})\supseteq I\setminus \{j\}$, as stated in Theorem~\ref{thm:codeword_support_intersection}. These $7$ codewords (up to their nonzero scalar multiples) are 
\begin{align*}
    z^{(8)} & =  (0, 0, 0, 0, 2, 1, 2, 0, 1, 1, 1, 1, 1, 2), \\
    z^{(9)} & =  (0, 1, 2, 1, 2, 0, 1, 2, 0, 1, 1, 1, 2, 2), \\
    z^{(10)} & = (1, 2, 0, 1, 2, 0, 1, 1, 1, 0, 1, 2, 1, 2), \\
    z^{(11)} & = (2, 1, 0, 2, 2, 2, 0, 1, 2, 1, 0, 2, 2, 2), \\
    z^{(12)} & = (1, 2, 0, 1, 1, 1, 0, 1, 2, 1, 2, 0, 2, 1), \\
    z^{(13)} & = (0, 2, 1, 2, 2, 2, 0, 1, 2, 1, 1, 1, 0, 2), \\
    z^{(14)} & = (0, 1, 2, 1, 1, 1, 0, 1, 2, 1, 1, 1, 1, 0). \\
\end{align*}

Finally, note that, in order to derive the lower bound on the length of minimal codes given in Theorem~\ref{thm:lower_bound_length}, we use in its proof 
that each of the codewords $z^{(j)}$ has weight at least $q(k-1)=9$. However, in this case only $z^{(8)}$ has weight $9$, while all the other codewords have weight $11$.
\end{example}

\begin{remark}\label{rem:nonsharp_bound}
 It is natural to ask whether the bound of Theorem~\ref{thm:lower_bound_length} is sharp or not. As stated in Subsection~\ref{sec:mincodescut}, minimal codes of dimension $k$ over $\Fq$ correspond to cutting blocking sets in $\PG(k-1,q)$ and a cutting blocking set is in particular a $(k-1)$-fold blocking set. 
 When we restrict to the case $4 \leq k\leq \sqrt{q}+2$, Theorem~\ref{thm:characterization_beutelspacher} characterizes a $(k-1)$-fold blocking set~$\mM$ in $\PG(k-1,q)$ of cardinality $(q+1)(k-1)$. This only happens in three cases.
 
 \noindent \underline{Case I:} $\mM$ is the  union of $k-1$ disjoint lines. In this case $\mM$ cannot be a cutting blocking set. To see this, write $\mM=\ell_1\cup\ldots\cup \ell_{k-1}$. Pick $P_1\in \ell_1,\ldots, P_{k-1} \in \ell_{k-1}$ and let $\Lambda:=\langle P_1,\ldots, P_{k-1}\rangle$. If $\dim (\Lambda)\leq k-3$, then $\Lambda$ is contained in a $(k-3)$-flat $\Lambda'$. Consider the sheaf of hyperplanes containing $\Lambda'$. They are $q+1$ and only $k-1$ of them contain other points of $\mM$ in addition to $P_1,\ldots, P_{k-1}$. Since $k-1<q+1$ there is at least one hyperplane $H$ such that $H\cap \mM=\{P_1,\ldots, P_{k-1}\}$ and $\langle H \cap \mM\rangle \subseteq \Lambda' \neq H$. This implies that, in this case, $\mM$ is not a cutting blocking set. Suppose then that $\dim(\Lambda)=k-2$. Fix $P_1,\ldots, P_{k-3}$ and consider the flat $\Gamma:=\langle P_1,\ldots, P_{k-3}, \ell_{k-2}\rangle$. If $\dim(\Gamma)<k-2$, then there exists $Q_{k-2}\in \ell_{k-2}\cap \langle P_1,\ldots, P_{k-3}\rangle$. Thus, if we replace $P_{k-2}$ by $Q_{k-2}$, we get that $\dim(\Lambda)<k-2$, and we can conclude as done before that $\mM$ is not cutting. Hence, assume $\dim(\Gamma)=k-2$. In this case $\Gamma \cap \ell_{k-1}\neq \emptyset$. Take $Q_{k-1} \in \Gamma \cap \ell_{k-1}$. If $Q_{k-1} \in \langle P_1,\ldots, P_{k-3}\rangle$, we substitute $P_{k-1}$ with $Q_{k-1}$ and get again $\dim(\Lambda)<k-2$, which implies $\mM$ not being cutting. Therefore, assume that the space $\langle P_1,\ldots,P_{k-3},Q_{k-1}\rangle$ is a hyperplane in $\Gamma$. Since $\Gamma$ also contains $\ell_{k-2}$, there exists $R_{k-2}\in \ell_{k-2}\cap \langle P_1,\ldots,P_{k-3},Q_{k-1}\rangle$. Thus, replacing $P_{k-1}$ with $Q_{k-1}$ and $P_{k-2}$ with $R_{k-2}$, we again obtain that $\dim(\Lambda)<k-2$ and~$\mM$ is not cutting.

 \noindent \underline{Case II:} $k=\sqrt{q}+2$ and $\mM$ is a $3$-dimensional Baer subspace. If $k\geq 5$, then $\langle \mM\rangle \neq \PG(k-1,q)$, 
 so~$\mM$ cannot be a cutting blocking set. For the remaining case, where $k=q=4$, one can observe that for a (hyper)plane $H$ in $\PG(3,q)$, $H$ intersects $\mM$ in a Baer subplane or in a Baer subline. In the latter case, one has $\langle \mM \cap H \rangle \neq H$, and so $\mM$ is not a cutting blocking set. The fact that for $k=q=4$ a $3$-dimensional Baer subspace $\mM$ cannot be cutting could be also deduced from Example~\ref{ex:[17,4]}, since the cardinality of $\mM$ is $15$.
 
 \noindent \underline{Case III:} $q=k=4$ and $\mM$ is the complement of a hyperoval in a plane of $\PG(3,q)$. In this case $\langle \mM \rangle \neq \PG(3,q)$ and $\mM$ cannot be a cutting blocking set.
 
 Therefore, when $4\leq k \leq \sqrt{q}+2$, the bound in Theorem~\ref{thm:lower_bound_length} is never sharp.
\end{remark}

\begin{corollary}\label{cor:lower_bound_not_sharp}
 Let $\mC$ be a minimal $[n,k]_q$ code with $3\leq k \leq \sqrt{q}+2$. Then $n \geq (q+1)(k-1)+1$, unless $q=2$ and $k=3$.
\end{corollary}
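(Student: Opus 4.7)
The plan is to split the argument into two cases according to whether $k=3$ or $k\geq 4$, since for $k\geq 4$ the relevant geometric content is already packaged in Remark~\ref{rem:nonsharp_bound}, whereas $k=3$ requires a separate bookkeeping argument using Theorem~\ref{thm:2fold}. In both cases the starting point is the same: a minimal $[n,k]_q$ code corresponds (via the discussion on page~\pageref{page:correspondence}) to a cutting blocking set of cardinality~$n$ in~$\PG(k-1,q)$.

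First, suppose $k\geq 4$; the hypothesis $k\leq \sqrt{q}+2$ then forces $q\geq 4$. Theorem~\ref{thm:lower_bound_length} already yields $n \geq (q+1)(k-1)$, so it suffices to exclude equality. If equality held, the associated cutting blocking set $\mathcal{M}\subseteq \PG(k-1,q)$ would in particular be a $(k-1)$-fold blocking set of cardinality exactly $(q+1)(k-1)$. The assumption $4\leq k\leq \sqrt{q}+2$ puts us inside the hypothesis of Theorem~\ref{thm:characterization_beutelspacher}, which lists three possible shapes of~$\mathcal{M}$; Remark~\ref{rem:nonsharp_bound} has already verified that none of these three is cutting. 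This contradiction yields $n\geq (q+1)(k-1)+1$.

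For $k=3$, cutting blocking sets in $\PG(2,q)$ coincide with $2$-fold blocking sets (as recalled before Theorem~\ref{thm:2fold}), so I need only check that the four regimes of Theorem~\ref{thm:2fold} each deliver $n\geq 2q+3$ when $q\geq 3$. Indeed:
\begin{itemize}
\item[(i)] if $q<9$, then $n\geq 3q$, and $3q\geq 2q+3$ whenever $q\geq 3$, covering $q\in\{3,4,5,7,8\}$ (while $q=2$ is precisely the excluded case);
\item[(ii)] if $q>4$ is a square, then $n\geq 2q+2\sqrt{q}+2 \geq 2q+3$;
\item[(iii)] if $q=p^{2d+1}>19$ is a non-square, then $n\geq 2q+p^d\lceil(p^{d+1}+1)/(p^d+1)\rceil+2\geq 2q+3$, since the middle summand is positive;
\item[(iv)] if $q\in\{11,13,17,19\}$, then $n\geq (5q+7)/2 \geq 2q+3$, as $5q+7\geq 4q+6$ for any $q\geq -1$.
\end{itemize}

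The main obstacle is essentially notational: all substantive work lives in Remark~\ref{rem:nonsharp_bound} and in the classical results quoted as Theorems~\ref{thm:characterization_beutelspacher} and~\ref{thm:2fold}. The one point worth emphasising is that the exclusion $(q,k)=(2,3)$ is genuinely necessary: the tetrahedron construction of Remark~\ref{remdisc} produces a $[6,3,3]_2$ minimal code that attains $(q+1)(k-1)=6$ with equality, so the bound $n\geq (q+1)(k-1)+1$ cannot hold there.
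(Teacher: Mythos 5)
Your proposal is correct and follows essentially the same route as the paper: the $k\geq 4$ case defers to Remark~\ref{rem:nonsharp_bound} (which handles the three Beutelspacher scenarios), and the $k=3$ case reduces to $2$-fold blocking sets via Theorem~\ref{thm:2fold}. You spell out the four-regime check for $k=3$ in more detail than the paper (which only says one can "easily verify" it), and your closing observation that the tetrahedron gives a $[6,3,3]_2$ minimal code attaining $(q+1)(k-1)$ with equality is a useful confirmation that the exclusion of $(q,k)=(2,3)$ is genuinely necessary.
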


\begin{proof}
The case $k\geq 4$ has been discussed in Remark~\ref{rem:nonsharp_bound}.
When $k=3$, cutting blocking sets are equivalent to $2$-fold blocking set. Using Theorem~\ref{thm:2fold}, one can easily verify that the only case in which a $2$-fold blocking set has cardinality $2q+2$ is when $q=2$. 
\end{proof}

\bigskip

\section{Statistical Approach}
\label{sec:3}

Most bounds for minimal codes we are aware of involve 
either $(q,n,k)$, or $(q,k,d)$. Bounds involving all the four parameters $(q,n,k,d)$ can in turn be obtained combining these with  classical 
bounds for Hamming-metric codes, such as the Singleton or the Griesmer bound.

In this section, we develop a method to establish new inequalities that directly involve all the four parameters of a minimal code, namely $(q,n,k,d)$.
As an application, we obtain an upper bound for the minimum distance $d$ of a minimal code in terms of $(q,n,k)$. As we will see in the examples, this bound excludes the existence of minimal codes with parameter sets that do not violate any of the known bounds.

Our approach combines  Theorem~\ref{thm:lower_bound_old_conjecture} with ideas from statistics,  
interpreting the 
weight of the codewords of a linear code as a discrete random variable and computing/estimating its mean and variance.
As simple corollaries of our bounds, we recover classical results on constant-weight codes.

Throughout this section, $\mC$ denotes a nondegenerate code.
Our results can be made more precise when $\mC$ is projective;
see Section~\ref{sec:1} for the definition.

\subsection{Mean and Variance of the Nonzero Weights in a Linear Code}

We start with an upper bound for the sum of the squares of the weights in a nondegenerate linear code. The proof uses one of the  Pless' identities.

 \begin{lemma}
 \label{boundvar}
  Let $\mC$ be a nondegenerate $[n,k]_q$ code. We have
  $$\sum_{c \in \mC} \wH(c)^2 \ge q^{k-2} \, n \, (q-1) \, [n(q-1)+1].$$
  Moreover, equality holds if and only if $\mC$ is projective.
    \end{lemma}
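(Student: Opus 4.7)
The plan is to expand the second power moment $\sum_{c \in \mC} \wH(c)^2$ as a double sum over coordinate positions, and then to estimate each term by case analysis on a pair of evaluation functionals associated to the positions. Write
\[
\sum_{c \in \mC} \wH(c)^2 \;=\; \sum_{c \in \mC} \sum_{i,j \in [n]} \mathbf{1}[c_i \neq 0]\, \mathbf{1}[c_j \neq 0] \;=\; \sum_{i,j \in [n]} N_{ij},
\]
where $N_{ij} := |\{c \in \mC \mid c_i \neq 0,\ c_j \neq 0\}|$. Separate the diagonal terms $i = j$ from the off-diagonal ones.

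For the diagonal, nondegeneracy of $\mC$ ensures that for each $i \in [n]$ the linear functional $e_i : \mC \to \F_q$, $c \mapsto c_i$, is nonzero. Its kernel therefore has dimension $k-1$, so $N_{ii} = (q-1)q^{k-1}$, contributing $n(q-1)q^{k-1}$. For off-diagonal terms, consider the pair $(e_i, e_j)$ with $i \neq j$. There are two cases. If $e_i$ and $e_j$ are linearly independent, then the joint map $c \mapsto (c_i, c_j)$ surjects onto $\F_q^2$ with all fibers of size $q^{k-2}$, so $N_{ij} = (q-1)^2 q^{k-2}$. If $e_i$ and $e_j$ are linearly dependent, then $c_j = \lambda c_i$ for some $\lambda \in \F_q^*$ and all $c \in \mC$, which is equivalent to saying that the $i$-th and $j$-th columns of any generator matrix are proportional; in this case $N_{ij} = (q-1)q^{k-1}$.

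Since $(q-1)q^{k-1} = q(q-1)q^{k-2} > (q-1)^2 q^{k-2}$, the independent case always gives the smaller contribution. Applying this lower bound uniformly on all off-diagonal pairs yields
\[
\sum_{c \in \mC} \wH(c)^2 \;\ge\; n(q-1)q^{k-1} + n(n-1)(q-1)^2 q^{k-2} \;=\; q^{k-2} n (q-1)\bigl[q + (n-1)(q-1)\bigr],
\]
and a short algebraic simplification of the bracket gives $1 + n(q-1)$, which is exactly the stated bound. By the case analysis, equality in every off-diagonal pair holds if and only if no two columns of a generator matrix of $\mC$ are proportional, which by definition is exactly the projectivity of $\mC$.

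The main obstacle is merely bookkeeping: correctly identifying the "large" versus "small" case in the pair analysis, and translating linear dependence of evaluation functionals into proportionality of generator-matrix columns, which is the geometric statement that characterizes non-projective codes. No heavy machinery beyond elementary counting is required.
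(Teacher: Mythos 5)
Your proof is correct and takes a genuinely different route from the paper's. The paper invokes Pless' second power moment identity, which expresses $\sum_{c} \wH(c)^2$ in closed form as a function of $n$, $k$, $q$, and $W_2(\mC^\perp)$, and then observes that $W_2(\mC^\perp)\geq 0$ with equality exactly for projective codes. Your approach instead unpacks this identity from scratch: you expand the second moment as $\sum_{i,j} N_{ij}$, evaluate each $N_{ij}$ by a dimension count on the kernel of the joint functional $c\mapsto(c_i,c_j)$, and observe that the value is smallest when the two coordinate functionals are independent. The two arguments are in fact equivalent at a deeper level, since a weight-$2$ codeword $\lambda e_i^* + \mu e_j^* \in \mC^\perp$ is precisely the statement that columns $i$ and $j$ of a generator matrix are proportional; thus $W_2(\mC^\perp)$ counts exactly the (ordered) off-diagonal pairs that contribute the larger value $(q-1)q^{k-1}$ instead of $(q-1)^2q^{k-2}$ in your decomposition, and your surplus $n(q-1)q^{k-1} - n(q-1)q^{k-1}$ plus the off-diagonal excess matches the $4W_2(\mC^\perp)q^{k-2}$ term in Pless' formula. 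What your approach buys is self-containment — no appeal to Pless' identities or Stirling numbers — and the equality characterization falls out transparently from the case analysis. The paper's approach buys brevity and fits a broader moment-method framework; in particular, the same Pless machinery scales to higher moments if one wanted analogous bounds with higher-order constraints. One minor polish: when treating the dependent case you should note explicitly that nondegeneracy rules out $e_j=0$, so linear dependence of $e_i,e_j$ (both nonzero) really does mean $e_j=\lambda e_i$ with $\lambda\in\F_q^*$, which you implicitly use when concluding $N_{ij}=(q-1)q^{k-1}$.
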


  \begin{proof}
  For $i \in \{0,\ldots,n\}$ we denote by 
  $W_i(\mC^\perp)$ the number of codewords of weight $i$ in the dual code~$\mC^\perp$. Since $\mC$ is nondegenerate, we have 
  $W_1(\mC^\perp)=0$. Moreover, $\mC$ is projective if and only if $W_2(\mC^\perp)=0$.
    Using Pless' identities~\cite[Theorem 7.2.3(P1)]{huffman2010fundamentals} we can write
  \begin{equation} \label{pll}
  \sum_{c \in \mC} \wH(c)^2 =  
  \sum_{\nu =0}^2 \left( \nu! \, S(2,\nu) \, q^{k-\nu} (q-1)^{\nu} \binom{n}{n-\nu} \right) + 4 W_2(\mC^\perp) S(2,2)q^{k-2},
  \end{equation}
where $S(a,b) \ge 0$ is the Stirling number of the second kind indexed by $(a,b)$. 
Therefore
\begin{equation*}
  \sum_{c \in \mC} \wH(c)^2 \ge  
  \sum_{\nu =0}^2 \left( \nu! \, S(2,\nu) \, q^{k-\nu} (q-1)^{\nu} \binom{n}{n-\nu} \right),
  \end{equation*}
  with equality if and only if $\mC$ is projective.
  The lemma now follows from the fact that $S(2,0)=0$ and $S(2,1)=S(2,2)=1$.
\end{proof}

The next step consists in defining the mean and variance of the nonzero weights in a linear code and to study the latter via Lemma~\ref{boundvar}.
  
  \begin{notation} \label{EVar}
  For a code $\mC$, let
  \begin{align*}
  \E(\mC) &:= (q^k-1)^{-1} \sum_{c \in \mC} \wH(c), \\
  \Var(\mC) &:= (q^k-1)^{-1} \sum_{c \in \mC} \wH(c)^2 - \E(\mC)^2.
   \end{align*}
\end{notation}

We now compute/estimate these two quantities.

   \begin{theorem}
   \label{boundvar2}
    Let $\mC$ be a nondegenerate $[n,k]_q$ code. Let
    $\ell=n(q-1)/(q^k-1)$. We have
    $\E(\mC)=q^{k-1}\ell$ and 
    $\Var(\mC) \ge q^{k-2} \, \ell(1- \ell)$.
    Moreover, equality holds if and only if $\mC$ is projective.
   \end{theorem}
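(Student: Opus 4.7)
The plan is to treat the mean by a direct double-counting argument and reduce the variance bound immediately to Lemma~\ref{boundvar}, with all the work then being an algebraic simplification in which the constant $\ell=n(q-1)/(q^k-1)$ is carefully threaded.

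For the mean, I would first swap the order of summation and write
\[
\sum_{c\in\mC}\wH(c)\;=\;\sum_{i=1}^n\bigl|\{c\in\mC:c_i\neq 0\}\bigr|.
\]
For each $i\in[n]$ the projection $\pi_i\colon\mC\to\F_q$ sending $c\mapsto c_i$ is $\F_q$-linear, and nondegeneracy of $\mC$ is precisely the statement that $\pi_i$ is surjective for every $i$. Hence $\ker\pi_i$ has dimension $k-1$, which gives $|\{c\in\mC:c_i\neq 0\}|=q^k-q^{k-1}=q^{k-1}(q-1)$, independent of~$i$. Plugging back in,
\[
\E(\mC)=\frac{nq^{k-1}(q-1)}{q^k-1}=q^{k-1}\cdot\frac{n(q-1)}{q^k-1}=q^{k-1}\ell,
\]
as claimed. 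Note this equality is exact and requires only nondegeneracy, not projectivity.

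For the variance I would invoke Lemma~\ref{boundvar} in the form
\[
\frac{1}{q^k-1}\sum_{c\in\mC}\wH(c)^2\;\ge\;\frac{q^{k-2}\,n(q-1)[n(q-1)+1]}{q^k-1}\;=\;q^{k-2}\,\ell\,[n(q-1)+1],
\]
where in the last step I have used the definition of $\ell$. Expanding and using $n(q-1)=\ell(q^k-1)$ once more to rewrite $q^{k-2}\ell\cdot n(q-1)=q^{k-2}\ell^{2}(q^k-1)$, this becomes $q^{k-2}\ell^{2}(q^k-1)+q^{k-2}\ell$. Subtracting $\E(\mC)^2=q^{2k-2}\ell^{2}$ and collecting the $\ell^{2}$ terms,
\[
\Var(\mC)\;\ge\;q^{k-2}\ell^{2}\bigl[(q^k-1)-q^k\bigr]+q^{k-2}\ell\;=\;q^{k-2}\ell(1-\ell),
\]
which is the desired inequality.

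There is really no obstacle here: the only nontrivial input is Pless's identity, already packaged in Lemma~\ref{boundvar}, and the equality clause of that lemma (``$W_2(\mC^\perp)=0$'', i.e.\ $\mC$ projective) transfers verbatim to the variance bound, since the computation of $\E(\mC)^2$ and the manipulation of $\ell$ preserve equality. The only mildly delicate point worth double-checking in a clean write-up is the claim that $\ker\pi_i$ has dimension exactly $k-1$ for every $i$, which is where nondegeneracy enters; should $\mC$ fail to be nondegenerate at some coordinate~$i$, the corresponding term in $\sum_i|\{c:c_i\neq 0\}|$ would simply vanish and $\E(\mC)$ would drop below $q^{k-1}\ell$, so the hypothesis cannot be removed.
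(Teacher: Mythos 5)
Your proposal is correct and follows essentially the same route as the paper: both compute $\E(\mC)$ exactly by observing that nondegeneracy forces each coordinate projection to be surjective (so each contributes $q^{k-1}(q-1)$), and both bound the variance by dividing Lemma~\ref{boundvar} through by $q^k-1$, subtracting $\E(\mC)^2=q^{2k-2}\ell^2$, and simplifying via $n(q-1)=\ell(q^k-1)$, with the equality clause inherited verbatim from the lemma. The only cosmetic difference is that you spell out the double-counting/kernel argument for the mean, whereas the paper states it in one line.
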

   
   \begin{proof}
Since $\mC$ is nondegenerate, we have
\begin{equation*}
\E(\mC) = (q^k-1)^{-1} \sum_{i=1}^n  (q^k-q^{k-1}) = n(q^k-q^{k-1})/(q^k-1) =  q^{k-1}\ell.
\end{equation*}
Combining this with
Lemma~\ref{boundvar} we obtain
\begin{eqnarray*}
\Var(\mC) &\ge&
\frac{q^{k-2}n(q-1)[n(q-1)+1]}{q^k-1} - q^{2k-2} \,  \frac{n^2(q-1)^2}{(q^k-1)^2} \\
&=& q^{k-2} \ell [n(q-1)+1] - \ell^2 q^{2k-2} \\
&=& q^{k-2} \ell(1-\ell),
\end{eqnarray*}
as desired.
   \end{proof}

   \begin{remark} \label{rrr}
  The quantity $\E(\mC)$ in Notation~\ref{EVar} expresses the \textbf{average weight}
   of $\mC$ and can be used to extend Theorem~\ref{firstn} as follows. Suppose that $\mC$ is a nondegenerate $[n,k]_q$ code with maximum weight~$w$.
   Using $\E(\mC) \le w$ one obtains
   $$n \ge \Bigg\lceil (n-w) \; \frac{q^k-1}{q^{k-1}-1} \Bigg\rceil \ge (n-w) q+1.$$
   In particular, if $\mC$ is minimal then
   $w\le n-k+1$ by~Proposition~\ref{prop:weights},
   from which Theorem~\ref{firstn} follows.
   \end{remark}

  As an immediate consequence of Theorem~\ref{boundvar2}
  we obtain the well-known fact  that constant-weight codes have large length; see e.g.~\cite{bonisoli1983every}.

\begin{corollary} \label{coro:const}
Let $\mC$ be a  constant-weight $[n,k,d]_q$ code. Then $n \ge (q^k-1)/(q-1)$. Moreover, if $\mC$ is projective then $n=(q^k-1)/(q-1)$ and $d=q^{k-1}$.
\end{corollary}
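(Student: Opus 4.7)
The plan is to extract everything from Theorem~\ref{boundvar2} by exploiting that a constant-weight code has variance zero. First I would reduce to the nondegenerate case: if $\mC$ has a zero column, puncturing it yields a code $\mC'$ of strictly smaller length with the same dimension and the same constant weight $d$. Iterating, we obtain a nondegenerate constant-weight $[n',k,d]_q$ code $\mC'$ with $n' \le n$, and it suffices to prove the bound for $\mC'$. So we may assume $\mC$ itself is nondegenerate.

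Now every nonzero codeword has weight $d$, so with the notation of~\ref{EVar} we have $\E(\mC)=d$ and $\Var(\mC)=0$. By Theorem~\ref{boundvar2},
\begin{equation*}
0 = \Var(\mC) \ge q^{k-2}\,\ell(1-\ell), \qquad \ell = \frac{n(q-1)}{q^k-1}.
\end{equation*}
Since $\ell>0$, this forces $\ell \ge 1$, which rearranges to $n \ge (q^k-1)/(q-1)$, giving the first claim.

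For the second claim, assume $\mC$ is projective. Then the equality case of Theorem~\ref{boundvar2} yields
\begin{equation*}
0=\Var(\mC) = q^{k-2}\,\ell(1-\ell),
\end{equation*}
so $\ell=1$ exactly, i.e.\ $n=(q^k-1)/(q-1)$. Finally, using $\E(\mC)=q^{k-1}\ell$ from the same theorem, $d = \E(\mC) = q^{k-1}$, completing the proof.

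There is no real obstacle here: the only subtlety is the initial reduction to the nondegenerate case, and it is harmless because puncturing zero columns preserves both the dimension and the constant-weight property. The rest is a direct substitution into the mean/variance formulas of Theorem~\ref{boundvar2}.
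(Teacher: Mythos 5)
Your proof is correct and follows essentially the same route as the paper: reduce to the nondegenerate case, note that a constant-weight code has $\Var(\mC)=0$, and then read off $\ell \ge 1$ (respectively $\ell = 1$ in the projective case) from Theorem~\ref{boundvar2}. The only difference is that you spell out the puncturing argument for the nondegeneracy reduction, which the paper dispatches with ``without loss of generality.''
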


\begin{proof}
 Without loss of generality, $\mC$ is nondegenerate. By Theorem~\ref{boundvar2}
 we have $0 \ge q^{k-2}\ell(1-\ell)$,
 from which $\ell \ge 1$. If $\mC$ is projective then $\ell=1$ and so $d=\E(\mC)=q^{k-1}$, as claimed.
\end{proof}

\subsection{Bounds}

By applying the result of the previous subsection, we can finally derive an upper bound for the minimum distance of a code $\mC$ as a function of~$q$, $n$, $k$ and the maximum weight in $\mC$.  

\begin{theorem} \label{thm:ww}
Let $\mC$ be a nondegenerate $[n,k,d]_q$ code of maximum weight $w>d$.
Let $\ell=n(q-1)/(q^k-1)$. We have
$w > n(q^k-q^{k-1})/(q^k-1)$ and 
\begin{equation}\label{bd}
   d \le \Bigg\lfloor q^{k-1} \ell - \frac{q^{k-2} \ell(1-\ell)}{w-q^{k-1}\ell} \Bigg\rfloor.  
\end{equation}
Moreover, equality holds in~\eqref{bd} if and only if $\mC$ is a projective  two-weight code. 
\end{theorem}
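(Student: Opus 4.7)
The plan is to view the nonzero codewords' Hamming weights as a finite distribution on $\{d, d+1, \dots, w\}$. Theorem~\ref{boundvar2} already pins down the mean $\mu := \E(\mC) = q^{k-1}\ell$ and gives the sharp lower bound $\Var(\mC) \ge q^{k-2}\ell(1-\ell)$, with equality exactly when $\mC$ is projective. The idea is to complement this with a pointwise \emph{upper} bound on $\Var(\mC)$ that involves both $d$ and $w$, and then to isolate $d$.

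First I would establish the auxiliary inequality $w > q^{k-1}\ell = \mu$. Since every nonzero weight is at most $w$ and their average is $\mu$, we have $w \ge \mu$; equality would force every nonzero codeword to have weight $w$, contradicting $d < w$. Next, the key observation: for every nonzero $c \in \mC$ we have $d \le \wH(c) \le w$, so $(\wH(c) - d)(\wH(c) - w) \le 0$, which rearranges to
$$\wH(c)^2 \le (d+w)\, \wH(c) - dw.$$
Averaging over the $q^k - 1$ nonzero codewords and subtracting $\mu^2$ yields
$$\Var(\mC) \le (d+w)\mu - dw - \mu^2 = (w-\mu)(\mu - d).$$
Combining this with the lower bound from Theorem~\ref{boundvar2} and dividing by the positive quantity $w - \mu$ gives
$$d \le \mu - \frac{q^{k-2}\ell(1-\ell)}{w-\mu} = q^{k-1}\ell - \frac{q^{k-2}\ell(1-\ell)}{w - q^{k-1}\ell},$$
and since $d \in \Z$ the floor in~\eqref{bd} follows.

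For the equality statement, both inequalities used must be tight simultaneously. The variance lower bound is sharp if and only if $\mC$ is projective (Theorem~\ref{boundvar2}), while the pointwise inequality $(\wH(c) - d)(\wH(c) - w) \le 0$ is tight for \emph{every} nonzero $c$ if and only if every such $c$ has weight $d$ or $w$, i.e., $\mC$ is a two-weight code. Together these two conditions cut out exactly the projective two-weight codes, and conversely a direct computation (using the two equations $A_d + A_w = q^k - 1$ and $A_d d + A_w w = (q^k-1)\mu$ determining the weight distribution) shows that for such a code the unfloored right-hand side is already an integer and equals $d$. The only delicate point I anticipate is the strict inequality $w > \mu$ needed to divide, which is handled by the preliminary step above; everything else is bookkeeping.
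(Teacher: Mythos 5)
Your proof is correct and follows essentially the same route as the paper: the paper invokes the Bhatia--Davis inequality $\Var(\mC)\le(w-\E(\mC))(\E(\mC)-d)$ directly by citation, whereas you rederive it inline via the standard pointwise observation $(\wH(c)-d)(\wH(c)-w)\le 0$, after which the combination with Theorem~\ref{boundvar2} and the equality analysis (projective $\Leftrightarrow$ sharp variance lower bound, two-weight $\Leftrightarrow$ sharp Bhatia--Davis) are identical.
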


\begin{proof}
The first inequality follows from the fact that $w > \E(\mC)$, since $\mC$ is not constant-weight. Using the inequality of Bhatia–Davis~\cite{bhatia2000better} we obtain
 $$\Var(\mC) \le (w-\E(\mC))(\E(\mC)-d).$$
 Since $\mC$ is nondegenerate and not constant-weight we have $\E(\mC)=q^{k-1}\ell<w$. Therefore we conclude by~Theorem~\ref{boundvar2}.

 The second part of the statement follows from the fact that the bound of Theorem~\ref{boundvar2} is sharp if and only if $\mC$ is projective, and that the Bhatia–Davis inequality is met with equality if and only if the 
 underlying distribution takes only two values; see~\cite[Proposition 1]{bhatia2000better}.
\end{proof}

As an application of Theorem~\ref{thm:ww} we obtain the following bound for the minimum distance of a minimal code.

\begin{corollary}
\label{coro:lbd}
\label{cor:constr}
Let $\mC$ be a minimal nondegenerate $[n,k,d]_q$ code. If $\mC$ is not constant-weight, then 
$n-k+1 > n(q^k-q^{k-1})/(q^k-1)$ and \begin{equation} \label{ll}
d \le \Bigg\lfloor \frac{n(q-1)q^{k-2}[n-1-q(k-1)]}{n(q^{k-1}-1)-(k-1)(q^k-1)}\Bigg\rfloor.
\end{equation}
In particular, we have
\begin{equation}\label{eq:lowerStat}
    q^{k-2}n^2 -Bn +C\geq 0,
\end{equation}
where
\begin{equation*}
\left\{ \; 
\begin{split}
    B&= q^{k-2} +(k-1)(2q^{k-1}-1) + \frac{q^{k-1}-1}{q-1}, \\
    C&= (k-1)^2(q^k-1)+\frac{(k-1)(q^{k}-1)}{q-1}.
\end{split} \right.
\end{equation*}
\end{corollary}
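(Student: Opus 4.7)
The plan is to combine three ingredients already at hand: Theorem~\ref{thm:lower_bound_old_conjecture}, which gives $d\ge (q-1)(k-1)+1$ for minimal codes; Proposition~\ref{prop:weights}, which says every nonzero codeword of a minimal code has weight at most $n-k+1$; and the statistical bound from Theorem~\ref{thm:ww}. First, the non-constant-weight hypothesis makes Theorem~\ref{thm:ww} applicable and delivers $w>q^{k-1}\ell$, where $w$ is the maximum weight of $\mC$ and $\ell=n(q-1)/(q^k-1)$. Combining with $w\le n-k+1$ immediately gives $n-k+1>n(q^k-q^{k-1})/(q^k-1)$, which is the first assertion.

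To establish the closed-form bound~\eqref{ll} on $d$, I would rewrite the bound from Theorem~\ref{thm:ww} in the equivalent form $(q^{k-1}\ell-d)(w-q^{k-1}\ell)\ge q^{k-2}\ell(1-\ell)$ and substitute $w\le n-k+1$. The key algebraic identity is
\[ (n-k+1)(q^k-1)-nq^{k-1}(q-1)=n(q^{k-1}-1)-(k-1)(q^k-1), \]
which is exactly the denominator appearing in~\eqref{ll}. A direct calculation using $\ell=n(q-1)/(q^k-1)$ then collapses the inequality to the stated closed form, after which taking the floor is routine. Using the rearranged inequality has the additional advantage of avoiding dividing by the possibly small quantity $w-q^{k-1}\ell$ and sidesteps monotonicity concerns in $w$.

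Finally, to derive the quadratic inequality~\eqref{eq:lowerStat}, I would combine~\eqref{ll} with $d\ge (q-1)(k-1)+1$ and cross-multiply, noting that the denominator $n(q^{k-1}-1)-(k-1)(q^k-1)$ is positive by the first assertion. Collecting terms in $n$ produces an inequality of the shape $q^{k-2}(q-1)n^2-(q-1)B\,n+(q-1)C\ge 0$; dividing by $q-1$ I would then match the resulting coefficients with $B$ and $C$ via the identity
\[ (k-1)(q^{k-1}-1)+\frac{q^{k-1}-1}{q-1}=\frac{[(q-1)(k-1)+1](q^{k-1}-1)}{q-1}, \]
and an analogous identity for the constant term. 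The main obstacle is the purely computational bookkeeping that these coefficients collapse into the compact expressions $B=q^{k-2}+(k-1)(2q^{k-1}-1)+(q^{k-1}-1)/(q-1)$ and $C=(k-1)^2(q^k-1)+(k-1)(q^k-1)/(q-1)$; this is lengthy but transparent.
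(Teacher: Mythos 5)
Your proposal is correct and follows essentially the same path as the paper: apply Theorem~\ref{thm:ww} together with $w\le n-k+1$ from Proposition~\ref{prop:weights} to get the first assertion and the closed-form bound~\eqref{ll}, then combine~\eqref{ll} with $d\ge(q-1)(k-1)+1$ from Theorem~\ref{thm:lower_bound_old_conjecture} and clear denominators to obtain~\eqref{eq:lowerStat}. The rearrangement of the Bhatia--Davis bound into product form before substituting $w\le n-k+1$ is a sensible presentational choice (it makes the monotonicity in $w$ manifest, given that $q^{k-1}\ell-d\ge 0$ since $d\le\E(\mC)$), but it does not change the underlying argument.
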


\begin{proof}
The maximum weight of $\mC$ satisfies $w \le n-k+1$ by Proposition~\ref{prop:weights}.
Combining this with Theorem~\ref{thm:ww} one gets
$n-k+1 > n(q^k-q^{k-1})/(q^k-1)$
and
\begin{equation} \label{lll}
d \le \Bigg\lfloor q^{k-1} \ell - \frac{q^{k-2} \ell(1-\ell)}{n-k+1-q^{k-1}\ell} \Bigg\rfloor,
\end{equation}where $\ell=n(q-1)/(q^k-1)$.
Lengthy computations show that the RHS 
of~\eqref{lll} is equal to the RHS of~\eqref{ll}. The second part of the statement follows by combining 
\eqref{ll} with Theorem~\ref{thm:lower_bound_old_conjecture}, after lengthy computations.
\end{proof}

\begin{example}\label{ex:[17,4]}
There is no minimal $[16,4]_4$ code. To see this, observe that if such a code existed, then \eqref{eq:lowerStat} would give
 $-42 \ge 0$, a contradiction. So the minimum length of a minimal code  of dimension $4$ over $\F_4$ is at least $17$. 
 
Consider the parameters $(q,n,k)=(4,17,4)$ and suppose that there exists an $[17,4]_4$ nondegenerate minimal code $\mC$. Since $n < (q^k-1)/(q-1)$, $\mC$ cannot be constant weight by Corollary~\ref{coro:const}. Therefore by Corollary~\ref{coro:lbd} we conclude that $d \le 10$. The existence of a minimal nondegenerate~$[17,4,11]_4$ code is therefore excluded by Corollary~\ref{coro:lbd}, but it is not excluded by any of the other known bounds for the parameters of minimal codes. Note moreover that, by Theorem~\ref{thm:lower_bound_old_conjecture}, we have $d\geq 10$. Therefore
the minimum distance of a putative 
$[17,4]_4$ nondegenerate minimal code is exactly $10$ (when the largest minimum distance of an ``unrestricted'' $[17,4]_4$ linear code is instead known to be $12$). 
\end{example}

\begin{remark}
The constraints imposed by Corollary~\ref{cor:constr} 
and Theorem~\ref{thm:lower_bound_length} are in general incomparable.
More precisely, each of the two results excludes the existence of some minimal codes that are not excluded by the other.

One can see that Corollary~\ref{cor:constr} improves on Theorem~\ref{thm:lower_bound_length} if and only if  \eqref{eq:lowerStat} is violated when specialized to $n=(q+1)(k-1)$. After lengthy computations, one sees that this happens if and only if 
$$k< \frac{q^{k-1}+2q^{k-2}-2q^{k-3}+q-2}{(q^{k-3}+1)(q-1)}.$$
Manipulating this inequality it can be checked that Corollary~\ref{cor:constr} improves on Theorem~\ref{thm:lower_bound_length} for the parameter set $\{(k,q) \mid 3\leq k \leq q+3, \, q\geq 3\}$. When $q$ is at least $3$, this is an improvement also on Corollary~\ref{cor:lower_bound_not_sharp}.
On the other hand, Theorem~\ref{thm:lower_bound_length} provides a strictly sharper estimate than Corollary~\ref{cor:constr} if and only if  \eqref{eq:lowerStat} is satisfied for $n=(q+1)(k-1)-1$. For instance, this happens for the parameter set
$\{(k,q):k \geq 2q\}$.

We include in Table~\ref{indep} three collections of parameter sets that are excluded by Theorem~\ref{thm:lower_bound_length}
and Corollary~\ref{cor:constr}. The first column contains parameters that are excluded by both results, while the other two contain parameters that are  excluded 
by either Theorem~\ref{thm:lower_bound_length} or Corollary~\ref{cor:constr} (and not by both).
{\small
\begin{table}[h!]
    \centering
    \begin{tabular}{|x{4.5cm}|x{4.5cm}|x{4.5cm}|}
    \hline
    Some parameters of minimal codes  excluded by both Theorem~\ref{thm:lower_bound_length} and  Corollary~\ref{cor:constr}    &
     Some parameters of minimal codes excluded by Theorem~\ref{thm:lower_bound_length} and not by  Corollary~\ref{cor:constr}   &
     Some parameters of minimal codes
     excluded by Corollary~\ref{cor:constr} 
     and not by 
     Theorem~\ref{thm:lower_bound_length}
     \\
     \hline
     \hline
   $[8,4]_2$  & $[17,7]_2$     & $[16,5]_3$ \\ 
   $[15,5]_3$  &  $[31,9]_3$     & $[16,4]_4$ \\
   $[24,6]_4$ & $[44,10]_4$     & $[25,6]_4$ \\
    $[35,7]_5$ & $[99,21]_4$   &  $[36,7]_5$  \\
   $[63,9]_7$ & $[65,12]_5$  &   $[26,4]_7$ \\
    \hline
    \end{tabular}
    \caption{Code parameters for which the existence of minimal codes is excluded by Theorem~\ref{thm:lower_bound_length} and/or Corollary~\ref{cor:constr}.}
    \label{indep}
\end{table}
}
\end{remark}

\subsection{Other Applications}

In this short subsection we illustrate how  Theorem~\ref{thm:ww}
can be applied to study codes that are not necessarily minimal.
We start with 
a generalization of Corollary~\ref{coro:const}. More precisely, we show that the relative difference between the maximum and minimum weight of a code, $(w-d)/n$, gives a lower bound on the code's length. In other words, if the maximum and minimum weight of a code are relatively close to each other, then the code length is necessarily large.

\begin{proposition} \label{thm:ngrows}
 Let $\mC$ be a nondegenerate $[n,k,d]_q$ code of maximum weight $w$. We have
 $$\frac{1}{n} \le \frac{q-1}{q^k-1} + \frac{1}{4} \left(\frac{w-d}{n}  \right)^2 \frac{q^k-1}{q^{k-2}(q-1)}.$$
\end{proposition}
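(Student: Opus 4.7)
The plan is to combine the lower bound on $\Var(\mC)$ from Theorem~\ref{boundvar2} with an upper bound obtained in two steps: first the Bhatia--Davis inequality (as already used in the proof of Theorem~\ref{thm:ww}), and then an AM--GM type estimate to replace the factor $(w-\E(\mC))(\E(\mC)-d)$ by something depending only on $w-d$.

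First I would observe that the Bhatia--Davis inequality gives
$$\Var(\mC) \le (w-\E(\mC))(\E(\mC)-d).$$
Since $\E(\mC) \in [d,w]$, AM--GM yields
$$(w-\E(\mC))(\E(\mC)-d) \le \left(\frac{(w-\E(\mC))+(\E(\mC)-d)}{2}\right)^2 = \left(\frac{w-d}{2}\right)^2.$$
On the other hand, Theorem~\ref{boundvar2} gives $\Var(\mC) \ge q^{k-2}\ell(1-\ell)$, where $\ell = n(q-1)/(q^k-1)$. Chaining these two bounds produces
$$q^{k-2}\ell(1-\ell) \le \frac{(w-d)^2}{4}.$$

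The final step is algebraic: rewrite the inequality as
$$q^{k-2}\ell \le q^{k-2}\ell^2 + \frac{(w-d)^2}{4},$$
substitute $\ell = n(q-1)/(q^k-1)$, divide both sides by $\frac{n^2 q^{k-2}(q-1)}{q^k-1}$, and identify the three resulting terms with those appearing in the proposition. This yields exactly
$$\frac{1}{n} \le \frac{q-1}{q^k-1} + \frac{(w-d)^2}{4n^2} \cdot \frac{q^k-1}{q^{k-2}(q-1)},$$
which is the desired estimate. The argument is essentially mechanical once the AM--GM step is in place; the only mild subtlety is that the Bhatia--Davis bound requires $\E(\mC)\in[d,w]$, which is automatic since $\E(\mC)$ is an average of values in $[d,w]$.
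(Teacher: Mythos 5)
Your proof is correct and follows essentially the same route as the paper: the paper invokes Popoviciu's inequality $\Var(\mC)\le\frac14(w-d)^2$ directly, which is precisely what you obtain by chaining Bhatia--Davis with AM--GM, and the concluding algebra (substituting $\ell=n(q-1)/(q^k-1)$ and rearranging) is the same. The only cosmetic difference is that you derive the Popoviciu bound rather than citing it.
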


Note that in the extreme case where $w=d$ we recover 
Corollary~\ref{coro:const}.

\begin{proof}[Proof of Proposition~\ref{thm:ngrows}]
Using Popoviciu's inequality for the variance, along with Theorem~\ref{boundvar2}, we find
 $$q^{k-2} \ell(1-\ell) \le \Var(\mC) \le 
 \frac{1}{4} (w-d)^2,$$
 where $\ell=n(q-1)/(q^k-1)$.
 Re-arranging the terms, after tedious computations one obtains the desired inequality.
\end{proof}

A second application of 
Theorem~\ref{thm:ww} consists in obtaining constraints on the parameters of a code having few weights. A classical result 
about these codes is the following theorem by Delsarte.

\begin{theorem}[see~\cite{delsarte1973four}] \label{dee}
Let $\mC$ be an $[n,k]_q$ code and let $s=|\{\omega(c) \mid c \in \mC, \, c \neq 0\}|$. We have
$$q^k \le \sum_{i=0}^s \binom{n}{i} (q-1)^i.$$
\end{theorem}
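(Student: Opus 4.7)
The plan is to give a dimension-counting argument via an annihilator polynomial, working in the $q^n$-dimensional space $\C^{\F_q^n}$ of $\C$-valued functions on $\F_q^n$. Let $\{w_1,\ldots,w_s\}$ be the distinct nonzero weights occurring in $\mC$ and set $F(t):=\prod_{j=1}^s(t-w_j)\in\C[t]$, a polynomial of degree $s$ with $F(0)\neq 0$. For every $c\in\mC$ define $\phi_c\colon\F_q^n\to\C$ by $\phi_c(v):=F(\omega(v-c))$. Linearity of $\mC$ gives $v-c\in\mC$ whenever $v\in\mC$, so $\omega(v-c)\in\{w_1,\ldots,w_s\}$ for $v\neq c$ and $\omega(c-c)=0$; therefore $\phi_c(c)=F(0)\neq 0$ and $\phi_c$ vanishes on $\mC\setminus\{c\}$. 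The $q^k$ restrictions $\phi_c|_\mC$ are thus nonzero scalar multiples of the delta functions of $\mC$, and so span $\C^\mC$.

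Next I would introduce a basis of $\C^{\F_q^n}$ whose support filtration matches the right-hand side of the inequality. For $\C^{\F_q}$ take the basis consisting of the constant function $1$ together with the indicators $\mathbf{1}_a$ of each $a\in\F_q^*$. The tensor-product basis of $\C^{\F_q^n}$ is then indexed by pairs $(I,a)$ with $I\subseteq[n]$ and $a\colon I\to\F_q^*$; the corresponding basis element is the indicator of the flat $\{v:v_i=a(i)\text{ for all }i\in I\}$, and I call $I$ its \emph{support}. Let $V_s\subseteq \C^{\F_q^n}$ be the subspace spanned by basis elements whose support has size at most $s$. Counting admissible $(I,a)$ gives
\[
\dim V_s=\sum_{i=0}^s\binom{n}{i}(q-1)^i.
\]

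The core step is to verify $\phi_c\in V_s$ for every $c\in\mC$. Writing $\omega(v-c)=\sum_{i=1}^n\mathbf{1}[v_i\neq c_i]$ and using $\mathbf{1}[v_i\neq c_i]^2=\mathbf{1}[v_i\neq c_i]$, the degree-$s$ polynomial expansion of $F$ yields
\[
\phi_c(v)=\sum_{\substack{I\subseteq[n]\\ |I|\le s}}\gamma_{I,c}\prod_{i\in I}\mathbf{1}[v_i\neq c_i]
\]
for some coefficients $\gamma_{I,c}\in\C$. For each $i$, the factor $\mathbf{1}[v_i\neq c_i]$ lies in the span of $\{1,\mathbf{1}_1,\ldots,\mathbf{1}_{q-1}\}$ (it equals $1-\mathbf{1}_{c_i}$ if $c_i\neq 0$ and $\sum_{a\in\F_q^*}\mathbf{1}_a$ if $c_i=0$). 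Distributing the product, every monomial that appears is a basis element whose support is a subset of $I$, hence of size at most $s$, so $\phi_c\in V_s$. Because the restrictions $\phi_c|_\mC$ span the $q^k$-dimensional space $\C^\mC$, the restriction map $V_s\to\C^\mC$ is surjective, yielding $q^k\le\dim V_s$, which is precisely the claimed inequality.

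The main obstacle is choosing the right basis of $\C^{\F_q^n}$: one must ensure the filtration by support size interacts cleanly with the polynomial $F(\omega(v-c))$. The mixed basis $\{1,\mathbf{1}_1,\ldots,\mathbf{1}_{q-1}\}$ on each coordinate is what makes this work, since the ``constant'' choice does not enlarge the support, so multiplying $s$ factors of $\mathbf{1}[v_i\neq c_i]$ never produces a basis element of support exceeding $s$.
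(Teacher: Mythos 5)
The paper does not contain its own proof of this result: Theorem~\ref{dee} is stated as a cited black box, with the proof deferred to Delsarte's original paper \cite{delsarte1973four}, where it is derived through the machinery of association schemes and the MacWilliams transform. Your argument is correct and self-contained, and it is the standard ``annihilator polynomial'' or polynomial-method proof of the Delsarte bound, rather than the association-scheme route. The key pieces all check out: the $q^k$ restrictions $\phi_c|_\mC$ are nonzero scalar multiples of the delta functions on $\mC$, hence span $\C^\mC$; the mixed per-coordinate basis $\{1,\mathbf{1}_1,\dots,\mathbf{1}_{q-1}\}$ of $\C^{\F_q}$ is genuinely a basis (the indicator of $0$ is $1-\sum_{a\neq 0}\mathbf{1}_a$), so $V_s$ is well defined and has dimension $\sum_{i=0}^{s}\binom{n}{i}(q-1)^i$; the reduction of $F(\sum_i x_i)$ modulo $x_i^2=x_i$ to a multilinear polynomial of degree at most $s$ gives the membership $\phi_c\in V_s$; and surjectivity of the restriction $V_s\to\C^\mC$ yields $q^k\le\dim V_s$. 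What your approach buys is elementarity: one avoids the Krawtchouk/MacWilliams apparatus entirely and obtains the bound by a one-page dimension count, at the price of not seeing the duality structure that, in Delsarte's development, simultaneously yields the companion statement about the external distance and the covering radius. Your concluding remark correctly isolates the crux: it is the choice of basis in which one coordinate-factor is constant that keeps the support filtration compatible with multiplication, so that degree-$s$ polynomials in $\omega(v-c)$ land in $V_s$ regardless of $c$.
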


Specializing to $s=2$, the previous theorem shows that, for example, any
two-weight $[n,k]_q$ code satisfies
\begin{equation} \label{aaa}
q^k \le 1 + n(q-1) + \frac{n(n-1)}{2} (q-1)^2.
\end{equation}
This result however does not take into account \textit{which} values
the weight distribution can take.
Exploiting this information,
Theorem~\ref{thm:ww} provides in general 
different constraints 
on $n$ than those in~\eqref{aaa}.
We illustrate this with an example.

\begin{example}
Following the notation of 
 Theorem~\ref{thm:ww} and Theorem~\ref{dee}, let
 $(q,k,s,d,w)=(2,8,2,16,24)$. We look for a nondegenerate binary two-weight code $\mC$ of dimension $8$ having weights $16$ and $24$. 
 The constraints imposed on $n$ by Theorem~\ref{thm:ww} imply $34 \le n \le 45$, where the upper bound is met with equality if $\mC$ is projective. The constraint imposed by~\eqref{aaa} is instead $n \ge 23$. It is known that there exists a projective binary two-weight code of parameters $(n,k,d,w)=(45,8,16,24)$.
\end{example}

\bigskip

\section{Geometric Approach}
\label{sec:4}

 As already illustrated in Subsection~\ref{sec:mincodescut}, minimal codes are in one-to-one correspondence with cutting blocking sets. In this section we focus on this point of view on minimal codes, exploiting their 
 geometric characterization to construct new, general and infinite families of minimal codes. 
 In particular, we provide a construction of cutting blocking sets derived from Desarguesian $(r-1)$-spreads of $\PG(rt-1,q)$. In turn, this leads to a an inductive construction of small cutting blocking sets or, equivalently, of minimal codes with short length.
 In contrast to previous approaches, our construction works over any (possibly very small) finite field.

\subsection{Minimal Codes from Spreads}
We start by recalling the definition of $t$-spread in $\PG(k-1,q)$, which we will use to obtain a new construction of minimal codes. 
A $t$-\textbf{spread} $S$ of $\PG(k-1, q)$ is a partition of $\PG(k-1,q)$ in $t$-flats. It is well known that such a $t$-spread exists if and only if $t+1$ divides $k$; see~\cite{segre1964teoria}. In particular, a $1$-spread of  $\PG(k-1, q)$ is a partition of its points into disjoint lines and it is also called a \textbf{linespread}. It exists if and only if $k$ is even.

An algebraic representation of an $(r-1)$-spread of $\PG(2r-1, q)$ can be obtained as follows.
Let $\gamma\in \F_{q^r}$ be a primitive element and let $M\in \F_q^{r\times r}$ be the companion matrix of the minimal polynomial of $\gamma$ over $\Fq$. It is well known that $\F_{q^r}\cong\Fq[\gamma]\cong \Fq[M]=\{0\}\cup\{M^i : 1\leq i \leq q^r-1\}$ as $\F_q$-algebras.   For $i\in[q^r-1]$ define $V_i:=\{[x:xM^i]\mid x\in \PG(r-1,q)\}$, $V_0:=\{[x:0]\mid x\in \PG(r-1,q)\}$ and $V_{q^r}:=\{[0:y]\mid y\in \PG(r-1,q)\}$.
Then the set $\{V_0,\dots, V_{q^r}\}$ is an $(r-1)$-spread of $\PG(2r-1, q)$.

\begin{theorem}\label{thm:ConstructionPlanarSpreads}
Let $S$ be the $(r-1)$-spread of $\PG(2r-1,q)$ defined above and let $\mathcal{B}=V_0 \cup V_i \cup V_j \cup V_{q^r}\subseteq \PG(2r-1,q)$, with $0<i<j<q^r$. Suppose that for every $s>1$ dividing $r$ we have $j-i \not\equiv 0 \mod \big(\frac{q^s-1}{q-1}\big)$. Then $\mathcal{B} $ is a cutting blocking set.
\end{theorem}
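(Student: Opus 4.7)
I work in the natural $K$-module structure on the ambient space, where $K = \F_{q^r}$. Identify $\PG(2r-1,q)$ with the projective space on $K^2$ viewed as a $2r$-dimensional $\F_q$-vector space. By nondegeneracy of the trace pairing, every hyperplane is (uniquely up to $\F_q^*$) the zero locus of a functional $f_{a,b}(x,y) = \mathrm{Tr}_{K/\F_q}(ax+by)$ with $(a,b) \in K^2 \setminus \{(0,0)\}$. Under the isomorphism $\F_q[M] \cong K$, the spread elements read $V_0 = K \times \{0\}$, $V_{q^r} = \{0\} \times K$, and $V_k = \{(x, \gamma^k x) : x \in K\}$ for $k \in [q^r-1]$. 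One checks directly that $H_{a,b}$ contains $V_0, V_{q^r}$, or $V_k$ iff $a = 0$, $b = 0$, or $a+b\gamma^k = 0$, respectively. Since $(a,b) \ne (0,0)$ and $\gamma^i \ne \gamma^j$, any hyperplane $H$ contains at most one of the four elements of $\mathcal{B}$.

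To show that $\mathcal{B}$ is cutting, I verify $\langle H \cap \mathcal{B} \rangle = H$ for every hyperplane $H$, splitting into two cases. If $H$ contains some $V_\ast \in \mathcal{B}$, then $V_\ast$ contributes an $r$-dimensional $\F_q$-subspace to the span, and $H$ meets any other member of $\mathcal{B}$ in an $(r-1)$-dimensional subspace lying in a complementary coordinate factor of $K^2$, for a total of $2r - 1 = \dim H$ dimensions. Otherwise each $H \cap V_\ast$ is $(r-1)$-dimensional; set $W := (H \cap V_0) + (H \cap V_{q^r})$, which has $\F_q$-dimension $2r-2$ since the two summands sit in complementary coordinate factors. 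It then suffices to show that at least one of $H \cap V_i, H \cap V_j$ is not contained in $W$.

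A point $(x, \gamma^k x) \in H \cap V_k$ lies in $W$ iff $\mathrm{Tr}(ax) = 0$ (given that $\mathrm{Tr}((a+b\gamma^k)x) = 0$ already). Since $a$ and $a+b\gamma^k$ are both nonzero in this case, the $(r-1)$-dimensional kernels of the functionals $\mathrm{Tr}(a\cdot)$ and $\mathrm{Tr}((a+b\gamma^k)\cdot)$ coincide iff $a, a+b\gamma^k$ are $\F_q$-proportional, iff $a/b \in \F_q^* \gamma^k$. If this held for both $k = i$ and $k = j$, then $\gamma^{j-i} \in \F_q^*$, which forces $(q^r-1)/(q-1) \mid (j-i)$ because $\F_q^*$ is the unique subgroup of order $q-1$ in the cyclic group $\langle \gamma \rangle$. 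The hypothesis applied with $s = r$ excludes this, completing the proof.

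The main technical step is the trace-pairing characterization of the inclusion $(H \cap V_k) \subseteq W$. My analysis uses only the $s = r$ case of the stated hypothesis; the stronger conditions for proper divisors $s \mid r$ with $s > 1$ appear stronger than needed for this particular theorem, and presumably are imposed to anticipate the more general construction in Theorem~\ref{thm:ConstructionGeneralSpreads}.
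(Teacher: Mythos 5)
Your proof is correct and takes a genuinely different route. The paper works in the matrix model of the spread, reduces a failure of the cutting property to a Sylvester equation $AX - XM^{j-i} = 0$ with $A \in \GL(r-1,q)$, and concludes $X = 0$ via coprimality of the minimal polynomials of $A$ and $M^{j-i}$. You instead identify the ambient space with $K^2$ for $K = \F_{q^r}$, parametrize hyperplanes by the trace pairing $(a,b)$, and reduce the cutting condition in the interesting case to the statement that $a/b$ cannot lie in $\gamma^i\F_q^*$ and $\gamma^j\F_q^*$ simultaneously, which would force $\gamma^{j-i}\in\F_q^*$, i.e., $(q^r-1)/(q-1)\mid j-i$. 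This is cleaner and more self-contained, replacing the Sylvester-equation input with nondegeneracy of the trace form.

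Your closing observation about the hypothesis is on the mark, and in fact the situation is a bit sharper than you suggest. The paper's Sylvester argument needs $\F_q[\gamma^{j-i}] = \F_{q^r}$, which is equivalent to $(q^r-1)/(q^s-1) \nmid j-i$ for every proper divisor $s<r$; this is \emph{not} the same as the stated condition $(q^s-1)/(q-1) \nmid j-i$ for $1<s\mid r$ (e.g.\ $q=2$, $r=4$, $j-i=5$ satisfies the stated hypothesis, yet $\gamma^5\in\F_4$, so the minimal polynomial of $M^5$ has degree $2$, not $4$). Your trace-based proof sidesteps this entirely and shows the conclusion holds under the visibly weaker and cleaner assumption $\gamma^{j-i}\notin\F_q^*$, i.e.\ only the $s=r$ case. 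One tiny wording quibble: in the case where some $V_*\subseteq H$, the other intersections $H\cap V_{**}$ meet $V_*$ trivially because distinct spread elements intersect trivially, not because they literally sit in a ``complementary coordinate factor'' (that is only true for the pair $V_0, V_{q^r}$); the dimension count is unaffected.
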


\begin{proof}
 For ease of exposition we switch to vector notation, in which we represent $V_0$, $V_i$, $V_j$, $V_{q^2}$ as elements of the Grassmannian $\mathrm{Gr}_q(r,2r)$. In this representation we have $V_0=\mathrm{rowsp}(I_r \mid 0)$,
 $V_{q^r}=\mathrm{rowsp}(0 \mid I_r)$, $V_i=\mathrm{rowsp}(I_r \mid M^i)$ and $V_j=\mathrm{rowsp}(I_r \mid M^j)$. Let $H$ be a hyperplane in~$\Fq^{2r}$. We want to show that $\langle H \cap \mB\rangle =H$, or, equivalently, that $\langle H \cap \mB\rangle = \langle H \cap V_0 \rangle + \langle H \cap V_{q^r} \rangle+\langle H \cap V_i \rangle+\langle H \cap V_j \rangle$ has dimension at least $2r-1$. Observe first that if $H$ contains one among the $V_\ell$'s, say $V_0$, then there is nothing to prove, since  $\langle H \cap V_0\rangle+\langle H \cap V_i\rangle$ has already dimension (at least) $2r-1$. 
 Hence we can assume that $H$ intersect both $V_0$ and $V_{q^r}$ in an $(r-1)$-dimensional subspace. Then the space $\langle H \cap V_0\rangle+\langle H \cap V_{q^r}\rangle$ has dimension $2r-2$.  We can write  the intersection spaces as 
 $$\begin{array}{lclccclllcl}
 H \cap V_0 & = & \mathrm{rowsp}(&\!\!\!\!X_1&\!\!\!\! \mid &\!\!\!\!0&\!\!\!\!), & \phantom{aaaaa} & H \cap V_i & = & \mathrm{rowsp}(X_2 \mid X_2M^i), \\
  H \cap V_{q^r} & = & \mathrm{rowsp}(&\!\!\!\!0 &\!\!\!\!\mid &\!\!\!\!X_3&\!\!\!\!), & \phantom{aaaaa}
&  H \cap V_j & = & \mathrm{rowsp}(X_4 \mid X_4M^j),
 \end{array}$$
 for some $X_1,X_2,X_3,X_4 \in \Fq^{(r-1)\times r}$ of rank $r-1$.
 
Suppose by contradiction that $\langle H \cap \mB\rangle$ has dimension exactly $2r-2$. This implies 
 $$ \mathrm{rowsp}\begin{pmatrix} X_2 & X_2M^i \\X_4 & X_4M^j
 \end{pmatrix} \subseteq \mathrm{rowsp}\begin{pmatrix} X_1 & 0 \\ 0 & X_3 \end{pmatrix}, $$
 which in turn implies  
 $\mathrm{rowsp}(X_2)=\mathrm{rowsp}(X_4)=\mathrm{rowsp}(X_1)$ and $\mathrm{rowsp}(X_3)=\mathrm{rowsp}(X_2M^i)=\mathrm{rowsp}(X_4M^j)$. Without loss of generality, we can assume that $X_1=X_2=X_4=:X$, which reduces the above condition to 
 $$\mathrm{rowsp}(X_3)=\mathrm{rowsp}(XM^i)=\mathrm{rowsp}(XM^j).$$
 Thus, there exists a matrix $A \in \GL(r-1,q)$ such that 
 \begin{equation}\label{eq:sylvester}
 AX-XM^{j-i}=0.
 \end{equation}
The matrix equation in \eqref{eq:sylvester}, where the matrix $X$ is the unknown, is a \emph{Sylvester equation}. This 
is known to 
 have a unique solution 
 if the minimal polynomials of $A$ and $M^{j-i}$ are coprime; see e.g. \cite[Theroem 2.4.4.1]{horn2013matrix}. Observe that the minimal polynomial of $M^{j-i}$ is irreducible of degree~$r$, since $M^{j-i}$ corresponds to the element $\gamma^{j-i}$ and by the assumption on~$j-i$ in the statement we have $\Fq[\gamma^{j-i}]=\F_{q^r}$. Moreover, the minimal polynomial of $A$ has degree at most $r-1$, and hence it is coprime with the one of $M^{j-i}$'s. Therefore \eqref{eq:sylvester} has a unique solution, which is clearly $X=0$. This leads to a contradiction and concludes the proof.  
\end{proof}

We now concentrate on the more general case of 
 $(r-1)$-spreads in $\PG(rt-1,q)$.
 These can be constructed
 using the so-called \textbf{field reduction}; see \cite{segre1964teoria, lavrauw2015field}. This technique identifies points in $\PG(t-1,q^r)$ with $(r-1)$-flats in $\PG(rt-1,q)$. The idea is exactly the same as for the algebraic $(r-1)$-spread of $\PG(2r-1,q)$ described above. Let $\gamma$ be a primitive element in~$\F_{q^r}$ and let $M$ be the companion matrix of the minimal polynomial of $\gamma$ over $\Fq$. As already explained, there is an isomorphism $\F_{q^r} \cong \Fq[M]=\{0\} \cup \{M^i: 1\leq i \leq q^r-1\}$, which we call $\phi$. We can then extend it to vectors in $\F_{q^r}^t$ componentwise, obtaining an injective map
$$\begin{array}{rcl}
\varphi :  \F_{q^r}^t & \longrightarrow & \Fq^{r \times rt} \\
(v_1,\ldots, v_t) & \longmapsto &(\phi(v_1) \mid \ldots \mid \phi(v_t)).
\end{array}$$
This map can in turn be extended to a map  $\bar{\varphi}:\PG(t-1,q^r) \longrightarrow \mathrm{Gr}_q(r,tr)$, the Grassmannian, defined by $P=[v]\longmapsto \mathrm{rowsp}(\varphi(v))$. Note that $\bar{\varphi}$ is well-defined since it does not depend on the choice of the representative $v$ for the point $P$. Indeed, for a nonzero scalar multiple of $v$, say $\gamma^iv$, we have $\varphi(\gamma^iv)=M^i\varphi(v)$ and since $M^i$ is invertible, $\mathrm{rowsp}(M^i\varphi(v))=\mathrm{rowsp}(\varphi(v))$. It is then well-known that $\mathrm{Im}(\bar{\varphi})$ is a (vectorial) $r$-spread of $\Fq^{rt}$, which naturally gives rise to a projective  $(r-1)$-spread of $\PG(rt-1,q)$. Such a spread is known as \textbf{Desarguesian spread}; see \cite{segre1964teoria}.

In the sequel we will need the following special points in $\PG(t-1,q^r)$:  $P_\ell:=[e_\ell]$ for $\ell\in[t]$ and $Q_{\ell,m,i}:=[u_{\ell,m,i}]$, where $u_{\ell,m,i}:=e_\ell+\gamma^ie_{m}$ for $1\leq\ell<m\leq t$ and  $i \in [q^r-1]$. These will be used in the next result to extend the construction of Theorem \ref{thm:ConstructionPlanarSpreads} from two to $t$ \textbf{blocks}.

\begin{theorem}\label{thm:ConstructionGeneralSpreads}
  For each pair of integers $(\ell, m)$ such that $1\leq \ell <m \leq t$, let  $j_{\ell,m}, i_{\ell,m}\in [q^r-1]$ be integers with the following property: for all $s>1$ dividing $r$, $j_{\ell,m} -i_{\ell,m}\not\equiv 0 \mod \big(\frac{q^s-1}{q-1}\big)$. Define the set
 $$\mT:=\bigg(\bigcup_{1\leq \ell \leq t}\bar\varphi(P_\ell)\bigg) \cup \bigg(\bigcup_{1\leq \ell <m \leq t} (\bar\varphi(Q_{\ell,m,i_{\ell,m}}) \cup \bar\varphi(Q_{\ell,m,j_{\ell,m}}))\bigg).$$
 Then the projectivization of $\mT$ is a cutting blocking set in $\PG(rt-1,q)$.
\end{theorem}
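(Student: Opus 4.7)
The plan is to reduce the cutting condition in $\PG(rt-1,q)$ to $\binom{t}{2}$ applications of Theorem~\ref{thm:ConstructionPlanarSpreads}, one for each pair of ``coordinate'' blocks, and then glue the resulting pairwise information together using the direct sum decomposition of the ambient space. More precisely, I will switch to vector notation and identify a hyperplane with the kernel of a nonzero linear form $f:V\to\F_q$, where $V:=\F_q^{rt}=V_1\oplus\cdots\oplus V_t$ and $V_\ell:=\bar\varphi(P_\ell)$. For each pair $1\le \ell<m\le t$ I set $\Pi_{\ell,m}:=V_\ell\oplus V_m$, which corresponds to a $(2r-1)$-flat of $\PG(rt-1,q)$.

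The first step will be to check that $\mT\cap\Pi_{\ell,m}$ consists of exactly the four blocks $V_\ell$, $V_m$, $\bar\varphi(Q_{\ell,m,i_{\ell,m}})$, $\bar\varphi(Q_{\ell,m,j_{\ell,m}})$: any other $\bar\varphi(P_{\ell'})$ with $\ell'\notin\{\ell,m\}$ meets $\Pi_{\ell,m}$ trivially, and any $\bar\varphi(Q_{\ell',m',\cdot})$ with $\{\ell',m'\}\ne\{\ell,m\}$ likewise, because its nonzero vectors have nonzero components outside the $\ell$-th and $m$-th blocks. Since the hypothesis on $j_{\ell,m}-i_{\ell,m}$ is precisely the assumption of Theorem~\ref{thm:ConstructionPlanarSpreads}, these four blocks form a cutting blocking set of $\Pi_{\ell,m}\cong\PG(2r-1,q)$. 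Consequently, whenever $H\not\supseteq\Pi_{\ell,m}$ the intersection $H\cap\Pi_{\ell,m}$ is a hyperplane of $\Pi_{\ell,m}$ and the cutting property yields $\langle H\cap\mT\cap\Pi_{\ell,m}\rangle=H\cap\Pi_{\ell,m}$; and when $H\supseteq\Pi_{\ell,m}$ the same identity holds trivially, since already $V_\ell$ and $V_m$ together span $\Pi_{\ell,m}$. In either case, $H\cap\Pi_{\ell,m}\subseteq\langle H\cap\mT\rangle$.

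It then remains to establish the linear-algebra identity
\begin{equation*}
H=\sum_{1\le\ell<m\le t}(H\cap\Pi_{\ell,m}),
\end{equation*}
which is the crux of the argument. I plan to prove it directly: let $f_\ell$ denote the restriction of $f$ to $V_\ell$, split $[t]=I_0\sqcup I_1$ according to whether $f_\ell$ vanishes, and fix some $a\in I_1$, which exists as $f\ne 0$. Given $v\in H$ with decomposition $v=\sum_\ell v_\ell$, I will absorb each $v_\ell$ with $\ell\ne a$ into $\Pi_{\ell,a}\cap H$: if $\ell\in I_0$, then already $v_\ell\in V_\ell\cap H$; if $\ell\in I_1\setminus\{a\}$, the surjectivity of $f_a$ lets me pick $w_\ell\in V_a$ with $f_a(w_\ell)=-f_\ell(v_\ell)$ so that $v_\ell+w_\ell\in\Pi_{\ell,a}\cap H$. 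The residual then lies in $V_a\cap H$, which sits inside $\Pi_{a,b}\cap H$ for any $b\ne a$.

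Combining this decomposition with the pairwise inclusions above gives $H\subseteq\langle H\cap\mT\rangle$, and the reverse inclusion is immediate, so $\langle H\cap\mT\rangle=H$ for every hyperplane $H$, which means $\mT$ is a cutting blocking set. The main obstacle I expect is precisely the identity in the third paragraph: it is \emph{not} a formal lattice-theoretic consequence of $V=\sum_{\ell,m}\Pi_{\ell,m}$, since modular lattices of subspaces do not in general commute the operation of intersecting with $H$ past a sum. The direct-sum structure of $V$ into blocks $V_\ell$, together with the fact that each nonzero $f_\ell$ is surjective, is exactly what makes the required re-balancing work.
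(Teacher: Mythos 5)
Your proof is correct, and it takes a genuinely different route from the paper's. Both proofs reduce to the planar case (Theorem~\ref{thm:ConstructionPlanarSpreads}) over the pairwise flats $\Pi_{\ell,m}=\bar\varphi(P_\ell)\oplus\bar\varphi(P_m)$, but they glue the local information together differently. The paper first normalizes to the case where $H$ contains none of the blocks $\bar\varphi(P_\ell)$ (by quotienting out the blocks it does contain), then sets $\Lambda:=\langle H\cap\bigcup_\ell\bar\varphi(P_\ell)\rangle$ of dimension $(r-1)t$, and, re-running the Sylvester-equation argument inside each $\Pi_{1,m}$, extracts $t-1$ vectors $v_{1,2},\dots,v_{1,t}$ lying in $H\cap\mT$ but outside the local span; the crux is an independence check over $\Lambda$ done by projecting onto blocks. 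You instead invoke the cutting property of $\mT\cap\Pi_{\ell,m}$ as a black box to get $H\cap\Pi_{\ell,m}\subseteq\langle H\cap\mT\rangle$, and you replace the dimension count entirely by the clean identity $H=\sum_{\ell<m}(H\cap\Pi_{\ell,m})$, proved by a weight-balancing argument that corrects each block component $v_\ell$ by a vector in a fixed block $V_a$ on which $f$ is nonzero. Your route avoids both the normalization to $a=0$ and the need to re-enter the proof of the planar theorem, and it isolates explicitly the linear-algebra fact that actually makes the gluing work — a fact you rightly note is not a formal consequence of $V=\sum\Pi_{\ell,m}$ but uses the block decomposition. One small bookkeeping point worth making explicit: your absorption handles $\ell\in I_1$ with $f_\ell(v_\ell)=0$ implicitly (in that case $w_\ell=0$ works), and the residual lands in $V_a\cap H\subseteq\Pi_{a,b}\cap H$ for any $b\ne a$, which requires $t\ge2$; this is the only nondegenerate case anyway.
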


\begin{proof}
Once again we work in vector notation.
Let $H$ be a hyperplane in $\Fq^{rt}$. Let $a:=|\{\ell: \bar\varphi(P_\ell) \subseteq H \}|$. Then $0\leq a\leq t-1$ and $\dim(\langle H \cap \mT\rangle)\geq ra+(r-1)(t-a)$. Without loss of generality assume that $\{\ell \, : \,  \bar\varphi(P_\ell) \subseteq H \}=[a]$. Hence $\langle H\cap \mT\rangle$ contains the span of the first~$a\cdot r$ standard basis vectors.  By taking the quotient on this span, we reduce ourselves to proving the same statement for $a=0$, replacing $t$ by $t-a$. Therefore we can also assume $a=0$ without loss of generality.

We have that $\Lambda:=\langle H\cap \big(\bigcup_\ell \bar\varphi(P_\ell)\big)\rangle$ has dimension $(r-1)t$. For all integers $1\leq \ell <m \leq t$, define \begin{align*}
    \mS_{\ell,m}&:=\bar\varphi(P_\ell)\cup \bar\varphi(P_m)\cup  \bar\varphi(Q_{\ell,m,i_{\ell,m}})\cup \bar\varphi(Q_{\ell,m,j_{\ell,m}}), \\
\Pi_{\ell,m}&:=\langle \bar\varphi(P_\ell) \cup \bar\varphi(P_m)  \rangle=\langle e_i :  (\ell-1)r+1 \leq i \leq \ell r, \mbox{ or } (m-1)r\leq i \leq mr\rangle.
\end{align*}
Then $H\cap \Pi_{\ell,m}$ is a hyperplane in $\Pi_{\ell,m} \cong \Fq^{2r}$. Moreover, using the same argument as in the proof of Theorem~\ref{thm:ConstructionPlanarSpreads},
there exists a vector $v_{\ell,m}\in (H\cap \Pi_{\ell,m}) \cap \mS_{\ell,m}\subseteq H\cap \mS_{\ell,m}$ such that $v_{\ell,m} \notin \langle H \cap (\bar\varphi(P_\ell) \cup \bar\varphi(P_m))\rangle$. Observe that the support of $v_{\ell,m}$ is contained only in the $\ell$-th and the $m$-th blocks and that we can write $v_{\ell,m}=w_{\ell,m}^{(\ell)}+w_{\ell,m}^{(m)}$, where $ w_{\ell,m}^{(j)}\in \langle \bar\varphi(P_j)\rangle$, i.e., it has support contained only in the $j$-th block, for $j \in \{\ell,m\}$. Now consider the $t-1$ vectors $v_{1,2},\ldots, v_{1,t}$. Since $a=0$, none of the $v_{1,i}$'s belongs to $\Lambda$. It is left to show that for each $i\geq 3$ we have $v_{1,i}\notin \Gamma_{i-1}:=\Lambda + \langle v_{1,2}, \ldots, v_{1,i-1}\rangle$. By contradiction, suppose that $v_{1,i} \in \Gamma_{i-1}$. Let $\rho_i:\Fq^{rt}\rightarrow\Fq^r$ denote the projection on the $i$-th block. We have
$$w_{1,i}^{(i)}=\rho_i(v_{1,i}) \in \rho_i(\Gamma_i)=\langle H\cap \bar\varphi(P_i)\rangle,$$
since, by construction, the $i$-th block of any vector in $\Gamma_{i-1}$ is equal to the $i$-th block of some element in $\bar\varphi(P_i)\cap H$. Therefore, also the vector $w_{1,i}^{(1)}=v_{1,i}-w_{1,i}^{(i)}$ belongs to $H$. This means that $v_{1,i}\in H\cap (\bar\varphi(P_1) \cup \bar\varphi(P_i))\subseteq \Lambda$, which leads to a contradiction.
\end{proof}

\begin{remark}\label{rem:generalize_davydov}
The construction of Theorem \ref{thm:ConstructionGeneralSpreads} for $r=t=2$ (or, equivalently, the one of Theorem \ref{thm:ConstructionPlanarSpreads} for $r=2$) coincides with the construction of cutting blocking sets of \cite[Theorem~3.7]{davydov2011linear}, which consists of $4$ disjoint lines in $\PG(3,q)$. Therefore, Theorem \ref{thm:ConstructionGeneralSpreads} can be viewed as a generalization of that result.
\end{remark}

\begin{example}\label{exa:cutting_from_spread}
 We explicitly construct a cutting blocking set in $\PG(5,q)$ as explained in Theorem~\ref{thm:ConstructionGeneralSpreads}, with $r=2$ and $t=3$. We take as $\gamma$ a primitive element of $\F_{q^2}$ whose minimal polynomial over $\Fq$ is $x^2-p_1x-p_0$. We have $P_1=[1:0:0]$, $P_2=[0:1:0]$, $P_3=[0:0:1]$ and choose the following points in $\PG(2,q^2)$: $Q_{1,2,q^2-1}=[1:1:0]$, $Q_{1,2,1}=[1:\gamma:0]$, $Q_{1,3,q^2-1}=[1:0:1]$, $Q_{1,3,1}=[1:0:\gamma]$, $Q_{2,3,q^2-1}=[0:1:1]$, $Q_{2,3,1}=[0:1:\gamma]$.
 Therefore the set $\mT$ is
 \begin{align*}\mT= &\;\{(x,y,0,0,0,0): x,y \in \Fq\}\cup \{(0,0,x,y,0,0): x,y \in \Fq\}\cup\{(0,0,0,0,x,y) : x,y\in \Fq\}\\
 &\cup \{(x,y,x,y,0,0) : x,y \in \Fq \}\cup \{(x,y,y,p_0x+p_1y,0,0): x,y \in \Fq \} \\ 
 &\cup \{(x,y,0,0,x,y) : x,y \in \Fq \}\cup \{(x,y,0,0,y,p_0x+p_1y): x,y \in \Fq \} \\
 & \cup \{(0,0,x,y,x,y) : x,y \in \Fq \}\cup \{(0,0,x,y,y,p_0x+p_1y): x,y \in \Fq \}.\end{align*}
 The projectivization of $\mT$ gives the desired cutting blocking set in $\PG(5,q)$.
 
\end{example}

\subsection{Inductive Constructions of Cutting Blocking Sets}
As already observed in  the Introduction, of particular interest is the study minimal codes of small length for a given dimension.  Formally, for a fixed positive integer $k$ and a prime power~$q$, we are interested in determining the value of
$$m(k,q):=\min\left\{n \in \mathbb N_{\ge 1} \mid  \mbox{ there exists a minimal } [n,k]_q \mbox{ code}  \right\}.$$
This function has been explicitly studied in \cite{lu2019parameters}, where it was observed  that $m(2,q)=q+1$ and that 
\begin{equation}\label{eq:bounds_mkq}q(k-1)+1 \leq m(k,q) \leq (q-1)\binom{k}{2}+k,\end{equation}
where the upper bound is \emph{constructive} (the tetrahedron from page~\pageref{tetra}). The same results were independently obtained in \cite{alfarano2019geometric}, where shorter minimal codes are constructed for $k \in \{3,4,5\}$. In this notation, Theorem~\ref{thm:lower_bound_length} improves on the lower bound in \eqref{eq:bounds_mkq}, reading
$$ m(k,q) \geq (q+1)(k-1).$$
We already obtained improvements on this bound in Corollary~\ref{cor:lower_bound_not_sharp} and  Corollary~\ref{cor:constr}, as shown in Table~\ref{indep}. 

 In \cite{chabanne2013towards} it has been shown that  the upper bound on $m(k,q)$ in \eqref{eq:bounds_mkq} is far from being tight. More precisely, one has
\begin{equation}\label{eq:upper_bound_nonconstructive} m(k,q) \leq \frac{2k}{\log_q\left(\frac{q^2}{q^2-q+1}\right)},\end{equation}
indicating that, in principle, for a fixed $q$ and $k$ large enough
one might construct much shorter minimal codes. In particular, a natural problem is 
that of finding, for a fixed $q$,
 an infinite family of minimal codes over $\Fq$ whose length is {linear} in $k$. 
This problem is naturally motivated by the goal of
\textit{explicitly} constructing asymptotically good minimal codes. Indeed, while these codes are known to be asymptotically good, the proofs in~\cite{MR3163591,alfarano2019geometric}
are not constructive, as well as the bound in \eqref{eq:upper_bound_nonconstructive}. 
We are currently unaware of any \emph{explicit} general construction of minimal codes whose length is unbounded for a \textit{fixed} $q$, and that are asymptotically shorter than the tetrahedron; see also the discussion in Remark~\ref{remdisc}.

In the sequel, we introduce two new families of minimal codes whose  lengths are shorter than the one of the tetrahedron by a factor $2$ and by a factor $\frac{9}{4}$, respectively. We start with a result that represents a first step towards inductive constructions of cutting blocking sets.

\medskip

\begin{proposition}\label{prop:inductiveConstruction}
 Let $\mB=\mB_1\cup\ldots \cup \mB_r$ be a cutting blocking set in $\PG(N,q)$. For each $i \in [r]$, let $\Gamma_i:=\langle \mB_i\rangle \cong \PG(n_i,q)$ for some $n_i \leq N$ and let $\mB_i'\subseteq \Gamma_i$ be the isomorphic image of a cutting blocking set in $\PG(n_i,q)$.
Then  $\mB':=\mB_1'\cup\ldots\cup\mB_r'$ is a cutting blocking set.
\end{proposition}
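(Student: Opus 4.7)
The plan is to verify the defining cutting condition for $\mB'$ directly from that of $\mB$, hyperplane by hyperplane. Fix an arbitrary hyperplane $H$ of $\PG(N,q)$; it is enough to establish
$$\langle \mB' \cap H \rangle = H,$$
since $\mB'$ will then automatically be a blocking set ($H \neq \emptyset$ forces $\mB' \cap H \neq \emptyset$).

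The strategy is a block-by-block comparison. Writing $\mB' \cap H = \bigcup_i (\mB_i' \cap H)$ and $\mB \cap H = \bigcup_i (\mB_i \cap H)$, and using that $\mB$ is cutting in $\PG(N,q)$, I get $\sum_i \langle \mB_i \cap H \rangle = \langle \mB \cap H \rangle = H$. Thus it suffices to prove, for every $i$, the inclusion $\langle \mB_i \cap H \rangle \subseteq \langle \mB_i' \cap H \rangle$; summing these over $i$ will give $H \subseteq \langle \mB' \cap H \rangle$, and the reverse inclusion is trivial.

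The block-level inclusion I would establish by case analysis on whether $\Gamma_i \subseteq H$. If $\Gamma_i \subseteq H$, then $\mB_i' \cap H = \mB_i'$, and since $\mB_i'$ is a cutting blocking set of $\Gamma_i$ it must span $\Gamma_i$ (a blocking set is not contained in any hyperplane of its ambient space); hence $\langle \mB_i' \cap H \rangle = \Gamma_i \supseteq \langle \mB_i \cap H \rangle$. If $\Gamma_i \not\subseteq H$, then $H \cap \Gamma_i$ is a hyperplane of $\Gamma_i$, and applying the cutting property of $\mB_i'$ \emph{inside} $\Gamma_i$ gives
$$\langle \mB_i' \cap H \rangle = \langle \mB_i' \cap (H \cap \Gamma_i) \rangle = H \cap \Gamma_i,$$
which obviously contains $\langle \mB_i \cap H \rangle$, since $\mB_i \cap H \subseteq \Gamma_i \cap H$.

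I do not anticipate any substantial obstacle; the only delicate point is to apply the cutting property of $\mB_i'$ in the correct ambient space, namely $\Gamma_i \cong \PG(n_i, q)$ rather than the full $\PG(N, q)$. The case split $\Gamma_i \subseteq H$ versus $\Gamma_i \not\subseteq H$ is precisely what ensures that $H \cap \Gamma_i$ is a hyperplane of $\Gamma_i$ in the nontrivial case, making the use of the hypothesis on $\mB_i'$ legitimate.
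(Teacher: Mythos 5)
Your proof is correct and follows essentially the same strategy as the paper's: decompose over a fixed hyperplane $H$, split into the two cases $\Gamma_i \subseteq H$ and $\Gamma_i \not\subseteq H$, apply the cutting property of $\mB_i'$ inside $\Gamma_i$ in the second case, and sum the block-level spans to recover all of $H$. The only cosmetic difference is that the paper names the intermediate space $\Lambda_i := H \cap \Gamma_i$ and shows $\langle H \cap \mB_i'\rangle \supseteq \Lambda_i \supseteq \langle H \cap \mB_i\rangle$, whereas you state the equivalent inclusion $\langle \mB_i \cap H\rangle \subseteq \langle \mB_i' \cap H\rangle$ directly.
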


\begin{proof}
 Let $H$ be a hyperplane in $\PG(N,q)$. We want to show that $\langle H \cap \mB'\rangle=H$. By hypothesis we have that 
 $$H=\langle H\cap \mB\rangle=\langle H \cap \mB_1\rangle + \ldots + \langle H \cap \mB_r\rangle.$$
 Consider the spaces $\Lambda_i:=H \cap \langle \mB_i\rangle$, $i \in [r]$. Clearly, $\Lambda_i\supseteq \langle H \cap \mB_i\rangle$ for all $i$. We now examine two cases separately.
 
 \noindent\underline{Case I:} $\Lambda_i=\langle \mB_i\rangle$, that is, $H$ contains $\langle \mB_i\rangle$. In this case $H$ also contains $\mB_i'$ and $\langle H\cap \mB_i'\rangle =\langle \mB_i'\rangle = \langle \mB_i \rangle=\Lambda_i$.
 
 \noindent\underline{Case II:} $\Lambda_i$ is a hyperplane  in $\langle \mB_i\rangle$. By hypothesis, $\mB_i'$ is a cutting blocking set in $\langle \mB_i\rangle$, and hence $\langle H \cap \mB_i'\rangle \supseteq \langle\Lambda_i \cap \mB_i'\rangle=\Lambda_i$. 
 
 Therefore in both cases we have
 \begin{align*}\langle H \cap \mB'\rangle &= \langle H \cap \mB_1'\rangle + \ldots + \langle H \cap \mB_r'\rangle  \supseteq \Lambda_1+\ldots+\Lambda_r \\ &\supseteq \langle H \cap \mB_1\rangle + \ldots + \langle H \cap \mB_r\rangle=H,\end{align*}
 concluding the proof.
\end{proof}

We are now ready to combine the above result with Theorem~\ref{thm:ConstructionGeneralSpreads}
and derive a recursive upper bound on $m(k,q)$.

\begin{theorem}\label{thm:inductiveCardinality}
For all positive $a,b \in \mathbb{N}$,
 $$m(ab,q) \leq a^2m(b,q).$$
\end{theorem}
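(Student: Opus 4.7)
The plan is to combine Theorem~\ref{thm:ConstructionGeneralSpreads} with Proposition~\ref{prop:inductiveConstruction}, specialising to $r = b$ and $t = a$. Theorem~\ref{thm:ConstructionGeneralSpreads} provides a cutting blocking set of $\PG(ab-1,q)$ whose building blocks are elements of a Desarguesian $(b-1)$-spread, and Proposition~\ref{prop:inductiveConstruction} then lets one replace each block by an optimal cutting blocking set in the corresponding copy of $\PG(b-1,q)$.

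The first step is to verify that the hypothesis of Theorem~\ref{thm:ConstructionGeneralSpreads} can be met when $r = b$. When $b = 1$ the divisibility condition is vacuous. When $b \ge 2$, for each pair $(\ell,m)$ with $1 \le \ell < m \le a$, one must choose $i_{\ell,m}, j_{\ell,m} \in [q^b - 1]$ so that $j_{\ell,m} - i_{\ell,m}$ avoids the finitely many arithmetic progressions forbidden by the divisibility condition. A crude counting argument shows that the number of excluded differences is bounded by $\sum_{s \mid b,\, s > 1} (q-1)(q^b-1)/(q^s-1) \le q^{b-1}\cdot \tau(b)$, which is far less than $q^b - 1$ for $b \ge 2$. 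Hence valid exponents exist for every pair; equivalently, one may select $\gamma^{j_{\ell,m} - i_{\ell,m}}$ to generate $\F_{q^b}$ as an $\F_q$-algebra, and such elements are abundant by Galois theory. This is the only genuine subtlety in the argument.

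With these exponents fixed, Theorem~\ref{thm:ConstructionGeneralSpreads} yields a cutting blocking set $\mathcal{B} \subseteq \PG(ab-1,q)$ that is the disjoint union of $a + 2\binom{a}{2} = a^2$ blocks: the $a$ spread elements $\bar\varphi(P_\ell)$ for $\ell \in [a]$, together with the $2\binom{a}{2}$ spread elements $\bar\varphi(Q_{\ell,m,i_{\ell,m}})$ and $\bar\varphi(Q_{\ell,m,j_{\ell,m}})$ for $1 \le \ell < m \le a$. Each such block is an element of the Desarguesian $(b-1)$-spread of $\PG(ab-1,q)$, so it is a projective subspace of dimension $b-1$ whose span coincides with itself, isomorphic to $\PG(b-1,q)$. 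Applying Proposition~\ref{prop:inductiveConstruction}, I replace each block by the image inside its span of a cutting blocking set of $\PG(b-1,q)$ of minimum cardinality $m(b,q)$. The result is a cutting blocking set of $\PG(ab-1,q)$ of cardinality at most $a^2 \, m(b,q)$, and via the correspondence between cutting blocking sets and minimal codes recalled in Subsection~\ref{sec:mincodescut} this yields a minimal $[n,ab]_q$ code with $n \le a^2 \, m(b,q)$. The bound $m(ab,q) \le a^2 \, m(b,q)$ follows.
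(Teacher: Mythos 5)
Your proof is essentially identical to the paper's: both take $r=b$, $t=a$ in Theorem~\ref{thm:ConstructionGeneralSpreads} to obtain $a^2$ pairwise disjoint $(b-1)$-flats from a Desarguesian spread of $\PG(ab-1,q)$, and then invoke Proposition~\ref{prop:inductiveConstruction} to replace each flat by an isomorphic copy of a cutting blocking set of $\PG(b-1,q)$ of size $m(b,q)$. Your extra check that admissible exponents $i_{\ell,m},j_{\ell,m}$ exist is a reasonable bit of diligence the paper leaves implicit, and it is in fact immediate without any counting: one may always take $j_{\ell,m}-i_{\ell,m}=1$, since $\tfrac{q^s-1}{q-1}>1$ for every $s>1$.
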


\begin{proof}
 By Theorem~\ref{thm:ConstructionGeneralSpreads} we know that we can construct a cutting blocking set in $\PG(ab-1,q)$ with the aid of a $(b-1)$-spread. More precisely, we only need to take $a^2$ disjoint $(b-1)$-flats $\Gamma_1,\ldots, \Gamma_{a^2} \cong \PG(b-1,q)$ from the spread. By Proposition~\ref{prop:inductiveConstruction}, for each of them we can take the isomorphic image of a cutting blocking set in $\PG(b-1,q)$ with minimum cardinality $m(b,q)$. Therefore, we finally obtain a cutting blocking set in $\PG(ab-1,q)$ of cardinality $a^2m(b,q)$.
\end{proof}

Observe that the proof of Theorem~\ref{thm:inductiveCardinality} gives an explicit way of constructing a minimal $[a^2m(b,q),ab]_q$ code, provided that there exists already a construction for an $[m(b,q),b]_q$ minimal code. We illustrate how this construction works 
with the following example.

\begin{example} We fix 
 $k=6=3 \cdot 2$ and assume $q$ to be a square.  Observe that under these assumptions we know the exact values of $m(2,q)$ and  $m(3,q)$; see Section~\ref{sec:mincodescut}. Namely, we have $m(2,q)=q+1$ and
 $$m(3,q)=\begin{cases} 3q & \mbox{ if } q=4, \\
 2(q+\sqrt{q}+1) & \mbox{ if } q \geq 9.\end{cases}$$ 
 Now we can use  Theorem~\ref{thm:inductiveCardinality} in two ways. On the one hand, we deduce that
 $$ m(6,q) \leq 9 \cdot m(2,q)=9(q+1).$$
 Such a construction is obtained by  taking $9$ lines from a linespread in $\PG(5,q)$ as explained also in Example~\ref{exa:cutting_from_spread}.
 On the other hand, by interchanging the roles of $2$ and $3$ we obtain
 $$m(6,q) \leq 4\cdot m(3,q)= \begin{cases} 12(q+1) & \mbox{ if } q=4, \\
 8(q+\sqrt{q}+1) & \mbox{ if } q \geq 9.
 \end{cases}$$
 The corresponding cutting blocking set is constructed by first selecting $4$ planes in $\PG(5,q)$ via Theorem~\ref{thm:ConstructionPlanarSpreads}, and then by choosing, in each of these planes, a minimal $2$-fold blocking set: when $q=4$, we take $3$ lines not intersecting all in the same point; when $q\geq 9$, we choose $2$ disjoint Baer subplanes.
 It is easy to check that for $q<64$ the cutting blocking set consisting of $9$ lines is smaller, while for $q\geq 64$ the $8$ Baer subplanes give rise to a cutting blocking set with smaller cardinality. Notice that both  constructions produce a smaller cutting blocking set than the tetrahedron, which contains $15q-9$ points. For instance, let us consider the case $q=4$. The $9$ lines give rise to a minimal $[45,6]_4$ code, the $8$ Baer subplanes lead to a minimal $[56,6]_4$ code, while the tetrahedron  provides a $[66,4]_4$ code.
 If we take $q=64$, then the three constructions produce minimal codes whose parameters are $[585,6]_{64}$, $[584,6]_{64}$ and $[966,6]_{64}$, respectively.  
\end{example}

\begin{remark}
Very recently, a construction of cutting blocking sets in $\PG(5,q)$  as union of seven disjoint lines has been 
given in \cite{bartoli2020cutting}. This gives 
an improvement on the known upper bound for $m(6,q)$. In the same work, a construction of a cutting blocking set in $\PG(3,q^3)$ of size $3(q^3+q^2+q+1)$ has been obtained as union of three suitable disjoint $q$-order subgeometries. These results together yield the following bounds:
 \begin{align*}
     m(4,q^3) & \leq 3(q^3+q^2+q+1), \\
     m(6,q) & \leq 7(q+1).
 \end{align*}
\end{remark}

The proof of Theorem~\ref{thm:inductiveCardinality}, which constructs
minimal codes of dimension~$k=ab$, heavily relies on the 
existence of a smaller minimal code, whose dimension divides~$k$. 
Clearly, this recursive construction does not cover all dimensions,
as for instance it does not provide any nontrivial minimal code of prime dimension. While for~$k=5$ one can rely on the construction provided in \cite[Construction 2]{alfarano2019geometric}, which gives a $[8q-3,5]_q$ minimal code, for primes greater than~$5$ we are not (yet) able to construct any \emph{short} minimal code different from the tetrahedron. Also, we are not (yet) able to construct {short} minimal codes of odd dimension, unless the latter is divisible by~$3$ and~$q$ is a square. When $k$ is odd one can construct minimal codes taking several $(r-1)$-flats in $\PG(k-1,q)$, where $r$ is the smallest prime dividing $k$. However, when such a prime is big, the resulting code turns out to be quite long. 

The discussion in the previous paragraph motivates us 
to look for alternative constructions of minimal codes, with the ultimate goal of covering a larger dimension range.
Our next move in this direction is an 
inductive result that allows us to construct a cutting blocking set in $\PG(k,q)$ starting from a smaller one in $\PG(k-1,q)$. The following result has already been shown in \cite[Construction A]{davydov2011linear}. We include a proof for completeness.

\begin{proposition}[\textnormal{see \cite[Theorem 3.10]{davydov2011linear}}]\label{prop:inductivek+1}
Let $\mB'$ be a cutting blocking set in $\PG(k-1,q)$. Fix a hyperplane $\Lambda\subseteq \PG(k,q)$ and take an isomorphic image $\mT$ of $\mB'$ in $\Lambda$. Moreover, select $k$ points $P_1,\ldots, P_k \in \langle\mT\rangle$ not lying all in the same $(k-2)$-flat and a point $P \in \PG(k,q)\setminus \Lambda$. Define the lines $\ell_i:=\langle P_i, P \rangle$. Then the set 
$$\mB:= \mT \cup \bigg(\bigcup_{i=1}^k \ell_i\setminus\{P_i\}\bigg)$$
is a cutting blocking set in $\PG(k,q)$.  In particular, for every $k\in \N_{\ge 1}$ we have
$$m(k+1,q) \leq m(k,q)+(q-1)k+1.$$
\end{proposition}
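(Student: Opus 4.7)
My plan is to verify the cutting property for an arbitrary hyperplane $H \subseteq \PG(k,q)$ by a short case analysis on the position of $H$ relative to $\Lambda$, and then to read off the cardinality bound directly from the construction.

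For the cardinality, I would first note that the lines $\ell_1,\ldots,\ell_k$ are pairwise distinct (since $P_1,\ldots,P_k$ are distinct points of $\Lambda$ and $P\notin\Lambda$) and share only the point $P$: if two of them met in a second point they would coincide, forcing $P_i=P_j$. Each $\ell_i\setminus\{P_i\}$ has $q$ points, and these sets overlap only in $P$, giving $\bigl|\bigcup_{i=1}^k(\ell_i\setminus\{P_i\})\bigr|=k(q-1)+1$. Since $\ell_i\cap\Lambda=\{P_i\}\subseteq\mT^c$ after the removal, this union is disjoint from $\mT$, so $|\mB|=|\mT|+(q-1)k+1$. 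Choosing $\mB'$ to be a cutting blocking set of minimum size $m(k,q)$ yields the stated recursive bound.

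For the cutting property, let $H$ be any hyperplane of $\PG(k,q)$. If $H=\Lambda$, then $H\cap\mB\supseteq\mT$ and $\langle\mT\rangle=\Lambda=H$, so we are done. Otherwise $H\cap\Lambda$ is a hyperplane of $\Lambda\cong\PG(k-1,q)$, and because $\mT$ is cutting in $\Lambda$ we get $\langle H\cap\mT\rangle=H\cap\Lambda$, a $(k-2)$-flat. It then suffices to exhibit a point of $H\cap\mB$ lying outside $\Lambda$. I would split into two subcases: if $P\in H$, then $P\in\ell_i\setminus\{P_i\}\subseteq\mB$ for every $i$ and $P\notin\Lambda$, so $\langle H\cap\Lambda,P\rangle=H$. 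If $P\notin H$, I use the hypothesis that $P_1,\ldots,P_k$ do not all lie on a common $(k-2)$-flat of $\Lambda$: since $H\cap\Lambda$ is such a flat, there exists $i$ with $P_i\notin H$. For this $i$, both $P$ and $P_i$ lie outside $H$, so $\ell_i\cap H$ is a single point $Q_i$ with $Q_i\neq P$ and $Q_i\neq P_i$; hence $Q_i\in(\ell_i\setminus\{P_i\})\cap H\subseteq\mB\cap H$, and $Q_i\notin\Lambda$ because $\ell_i\cap\Lambda=\{P_i\}$. In either subcase we conclude $\langle H\cap\mB\rangle=H$.

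The routine part is the cardinality count and the subcase $H=\Lambda$. The key step where I must be careful is the subcase $P\notin H$: I need the genericity hypothesis on $P_1,\ldots,P_k$ to force some $P_i$ to escape the $(k-2)$-flat $H\cap\Lambda$, and then I must check that the ``extra'' point $Q_i$ I pick up really lies in $\mB$ (i.e., is not the removed point $P_i$) and really is off $\Lambda$ (so that it enlarges $H\cap\Lambda$ to the full hyperplane $H$). This is the main obstacle, but it is handled cleanly by the observation that $\ell_i\cap\Lambda=\{P_i\}$, so any point of $\ell_i$ other than $P_i$ is automatically outside $\Lambda$.
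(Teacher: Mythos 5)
Your proof is correct and follows essentially the same route as the paper's: handle $H=\Lambda$ directly, and for $H\neq\Lambda$ observe that $\langle H\cap\mT\rangle=H\cap\Lambda$ (a $(k-2)$-flat) and then exhibit a single point of $H\cap\mB$ off $\Lambda$ to generate all of $H$. The paper phrases the existence of such a point as a contrapositive (``if every $Q_i\in\ell_i\cap H$ lay in $\Lambda$, the $P_i$ would all lie in $H$ and hence $H=\Lambda$''), whereas you split into the subcases $P\in H$ and $P\notin H$; this is a cosmetic difference, and both arguments use the genericity hypothesis on $P_1,\ldots,P_k$ in the same way, namely to force some $\ell_i\cap H$ to avoid $\Lambda$.
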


\begin{proof}
 Let $H$ be a hyperplane in $\PG(k,q)$. If $H=\Lambda$, then clearly $\langle H \cap \mB\rangle=H$. If $H\neq \Lambda$, then we have that $\Lambda_0:=H\cap \Lambda$ is a hyperplane in $\Lambda$. Hence $\langle H\cap \mT\rangle= \langle \Lambda_0\cap \mT\rangle=\Lambda_0$. Moreover, $H$ meets each of the lines $\ell_i$'s in a point $Q_i$. Observe that not all of them can lie in $\Lambda$, because otherwise we would have $Q_i=P_i$ for every $i$ and $H=\Lambda$. Therefore, there exists a point $Q_i \in (\ell_i\cap H) \setminus \langle\mT\rangle$. This implies that $Q_i \in \langle H\cap\mB \rangle\setminus \Lambda_0$ and we can conclude that 
 $\langle H \cap \mB\rangle =H$.
\end{proof}

Proposition~\ref{prop:inductivek+1} shows how to construct a cutting blocking set in $\PG(k,q)$ which contains a copy of a cutting blocking set $\mT$ in $\PG(k-1,q)$. This is achieved by adding $(q-1)k+1$ points to~$\mT$. Moreover, among cutting blocking sets containing a copy of a smaller cutting blocking set (of codimension $1$), the construction of Proposition~\ref{prop:inductivek+1} is optimal, as shown by the following result.

\begin{proposition} \label{prop:not}
Let $\mB\subseteq \PG(k,q)$ be a cutting blocking set such that it contains (an isomorphic image of) a cutting blocking set $\mB'$ of $\PG(k-1,q)$. Then 
$$ |\mB|\geq |\mB'|+ (q-1)k+1.$$
\end{proposition}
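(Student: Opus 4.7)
The plan is to isolate the ``new'' points $\mB \setminus \mB'$, recognise them as an affine blocking set, and then apply the classical Jamison--Brouwer--Schrijver bound. Identifying $\mB'$ with its image, I may assume $\mB' \subseteq \mB$ with $\mB' \subseteq \Lambda$, where $\Lambda := \langle \mB' \rangle$ is a hyperplane of $\PG(k,q)$. Set $\mA := \mB \cap \Lambda$ (which contains $\mB'$) and $\mC := \mB \setminus \Lambda$, so that $|\mB| = |\mA| + |\mC| \geq |\mB'| + |\mC|$. Hence it suffices to prove that $|\mC| \geq (q-1)k+1$.

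First I would verify that $\mC$ meets every hyperplane $H$ of $\PG(k,q)$ different from $\Lambda$. Fix such an $H$ and let $\Pi := H \cap \Lambda$, which is a $(k-2)$-flat of $\Lambda$. If $\mC \cap H$ were empty, then $\mB \cap H = \mA \cap H \subseteq \Pi$, so $\langle \mB \cap H \rangle \subseteq \Pi \subsetneq H$, contradicting the assumption that $\mB$ is a cutting blocking set of $\PG(k,q)$.

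The next step is to translate the statement into the affine setting. The complement $\PG(k,q) \setminus \Lambda$ has the natural structure of the affine space $\mathrm{AG}(k,q)$, and the map $H \mapsto H \setminus \Lambda$ is a bijection between the hyperplanes of $\PG(k,q)$ distinct from $\Lambda$ and the affine hyperplanes of $\mathrm{AG}(k,q)$. The previous step therefore says exactly that $\mC \subseteq \mathrm{AG}(k,q)$ is a blocking set with respect to hyperplanes. By the classical theorem of Jamison, established independently by Brouwer--Schrijver, any such blocking set has cardinality at least $(q-1)k+1$, and this combined with $|\mB| \geq |\mB'|+|\mC|$ yields the desired inequality.

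The only nonelementary input is the Jamison--Brouwer--Schrijver bound itself; everything else is just a dimension count using the cutting property of $\mB$. As a sanity check, Proposition~\ref{prop:inductivek+1} meets this bound: there one has $\mA=\mB'$, and $\mC$ consists of the point $P$ together with the $k(q-1)$ affine points lying on the lines $\ell_i \setminus \{P_i\}$, whose union has exactly $(q-1)k+1$ points, i.e., the minimum allowed by Jamison's theorem.
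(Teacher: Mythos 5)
Your proof is correct, but it takes a genuinely different route from the paper's. The paper argues through the code-theoretic correspondence: $\mB$ corresponds to a minimal $[|\mB|,k+1]_q$ code, and for any hyperplane $H \supseteq \mB'$ the definition of a projective system gives $d \leq |\mB| - |\mB \cap H| \leq |\mB| - |\mB'|$; then Theorem~\ref{thm:lower_bound_old_conjecture} (the Alon--F\"uredi bound $d \geq (q-1)k+1$ for minimal codes of dimension $k+1$) closes the argument in one line. You instead remain entirely in the geometric setting: the observation that $\mC = \mB \setminus \Lambda$ blocks every hyperplane $H \neq \Lambda$ is a cleaner structural fact than the paper's cardinality inequality alone, and after the standard identification of $\PG(k,q)\setminus \Lambda$ with $\mathrm{AG}(k,q)$ the Jamison/Brouwer--Schrijver theorem gives $|\mC| \geq (q-1)k+1$. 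The two routes are close cousins at heart---Brouwer--Schrijver is itself a polynomial-method result, and both bounds fall out of the Alon--F\"uredi/Combinatorial Nullstellensatz circle---but the paper's proof is self-contained within the machinery it has already built (and is essentially a corollary of Theorem~\ref{thm:lower_bound_old_conjecture}), whereas yours imports a classical external theorem in exchange for making the affine-blocking-set structure of the ``new'' points explicit. Your sanity check against Proposition~\ref{prop:inductivek+1} is accurate and is a nice confirmation that the Jamison bound is exactly what is being saturated there.
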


\begin{proof}
 Let $\mB$ be a cutting blocking set in $\PG(k,q)$ and suppose it contains a copy $\mB'$ of a cutting blocking set in $\PG(k-1,q)$. Then $\mB'$ is contained in a 
 hyperplane $H$. By the correspondence between linear codes and projective systems (see page \pageref{page:correspondence}) we have
 $$d \leq |\mB|-|\mB\cap H|\leq |\mB|-|\mB'|.$$
 Combining this  with Theorem~\ref{thm:lower_bound_old_conjecture} we obtain the desired inequality.
\end{proof}

\begin{remark}
Proposition~\ref{prop:not} shows that the inductive construction  from Proposition~\ref{prop:inductivek+1} gives rise to a cutting blocking set that is minimal among all the cutting blocking sets containing a given cutting blocking set of codimension $1$. It is interesting to observe that starting from $\PG(1,q)$ and iterating this construction $k$ times, one obtains the tetrahedron, which is, therefore, minimal among the cutting blocking sets in $\PG(k-1,q)$ containing an isomorphic copy of a cutting blocking set of $\PG(i,q)$ for each $i\leq k-2$. Note that its cardinality is $\sim \frac{1}{2}qk^2$ for $k$ large.

All of this seems to suggest that in order to
obtain cutting blocking sets in $\PG(k-1,q)$ of size $m(k,q)$ (or at least linear in $k$) one should look at sets that do not contain (isomorphic copies of) smaller cutting blocking sets.
\end{remark}

\subsection{Explicit Constructions of Short Minimal Codes}

In this final subsection we combine the results obtained so far to construct minimal codes of short length. To our best knowledge, this constructions produce the shortest known minimal codes, for infinitely many  dimensions and field sizes. In particular, the construction applies to all those pairs $(k,q)$ for which the rational normal tangent set of~\cite{fancsali2014lines} cannot be constructed in $\PG(k-1,q)$.

\begin{construction}\label{construction_even} Assume that $k=2t$, for some $t \in \N_{\ge 1}$. We use the construction from Theorem~\ref{thm:ConstructionGeneralSpreads}, selecting $t^2=\frac{k^2}{4}$ disjoint lines from a  linespread.  The union of these $t^2$ lines a cutting blocking set in $\PG(k-1,q)$, and we  denote the corresponding code by $\mC_{k,q}$. 
\end{construction}

\begin{proposition}\label{prop:minimalLinesCode}
 The  code $\mC_{k,q}$  of Construction~\ref{construction_even} is a minimal $[(q+1)\frac{k^2}{2},k,q(k-1)]_q$  code.
\end{proposition}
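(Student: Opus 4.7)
The strategy is to combine Theorem~\ref{thm:ConstructionGeneralSpreads} with the correspondence between minimal codes and cutting blocking sets recalled in Subsection~\ref{sec:mincodescut}, and then compute the minimum distance via a careful hyperplane-intersection count. Applying Theorem~\ref{thm:ConstructionGeneralSpreads} with $r=2$, the union $\mT$ of the selected $t^2$ spread lines is a cutting blocking set of $\PG(k-1,q)$, so the correspondence immediately gives that $\mC_{k,q}$ is a minimal code of dimension $k$ (the cutting property forces $\langle \mT \rangle = \PG(k-1,q)$). Since the $t^2$ lines come from a spread they are pairwise disjoint, each contains $q+1$ points, and hence the length of $\mC_{k,q}$ equals $t^2(q+1)$, yielding the announced value.

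For the minimum distance, the same correspondence gives $d = n - \max_H |H \cap \mT|$ as $H$ ranges over the hyperplanes of $\PG(k-1,q)$. Every line is either entirely contained in $H$ or meets $H$ in a single point, so writing $L_H$ for the number of construction lines fully contained in $H$, one has $|H \cap \mT| = q L_H + t^2$ and therefore $d = q(t^2 - L_H^{\max})$. The desired identity $d = q(k-1) = q(2t-1)$ is thus equivalent to $L_H^{\max} = (t-1)^2$.

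To bound $L_H^{\max}$, I set $a := |\{\ell : \bar\varphi(P_\ell) \subseteq H\}|$, noting $a \le t-1$ since the $\bar\varphi(P_\ell)$'s span the whole ambient space. For each pair $1 \le \ell < m \le t$ I then analyse $H \cap \Pi_{\ell,m}$ inside the $4$-dimensional subspace $\Pi_{\ell,m} = \langle \bar\varphi(P_\ell),\bar\varphi(P_m)\rangle$, exploiting the fact that the four construction lines living in $\Pi_{\ell,m}$ are pairwise skew; hence in the associated $\PG(3,q)$ any plane contains at most one of them. This yields three cases: (i) pairs with both $P$-lines in $H$ force both corresponding $Q$-lines into $H$; (ii) pairs with exactly one $P$-line in $H$ contribute no $Q$-line (the unique skew line in the $\PG(3,q)$-plane $H \cap \Pi_{\ell,m}$ is the $P$-line itself); (iii) pairs with neither $P$-line in $H$ contribute at most one $Q$-line. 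Summing,
\[L_H \;\le\; a + 2\binom{a}{2} + \binom{t-a}{2} \;=\; a^2 + \binom{t-a}{2},\]
and an elementary optimization on $a \in \{0,\ldots,t-1\}$ yields the maximum $(t-1)^2$, attained at $a = t-1$ by taking $H$ to be any hyperplane containing $\langle \bar\varphi(P_1),\ldots,\bar\varphi(P_{t-1})\rangle$. Plugging back, $d = q(t^2 - (t-1)^2) = q(2t-1) = q(k-1)$.

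The main obstacle is the combinatorial case analysis inside each $\Pi_{\ell,m}$, and in particular the verification that having a single $P$-line of the pair inside $H$ blocks both $Q$-lines from entering $H$; this relies on the skewness of the four construction lines in $\Pi_{\ell,m}$, which ultimately traces back to the $(r-1)$-spread structure and the number-theoretic conditions in Theorem~\ref{thm:ConstructionGeneralSpreads}. Once this is handled, the length, dimension, and minimality follow immediately from Theorem~\ref{thm:ConstructionGeneralSpreads} and the cutting-blocking-set correspondence, and the optimization giving $L_H^{\max} = (t-1)^2$ is routine.
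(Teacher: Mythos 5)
Your proof is correct and follows essentially the same route as the paper: reduce to bounding the maximum number $b$ of construction lines lying in a hyperplane, parametrize by $a = |\{\ell : \bar\varphi(P_\ell) \subseteq H\}|$, derive $b \leq a^2 + \binom{t-a}{2}$, and optimize over $a \in \{0,\ldots,t-1\}$. Your justification of the per-pair bound via pairwise skewness of the four spread lines in $\Pi_{\ell,m}$ (so a solid-hyperplane can contain at most one) is just a geometric rephrasing of the paper's observation that $a_{\ell,m} \in \{0,1,4\}$ because any two members of $\mA_{\ell,m}$ span $\Pi_{\ell,m}$; the two are equivalent since distinct spread elements are disjoint and hence complementary in $\Pi_{\ell,m}$. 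One small remark: the length you compute, $t^2(q+1) = (q+1)\tfrac{k^2}{4}$, is the correct value (the proposition statement's ``$(q+1)\tfrac{k^2}{2}$'' is a typo in the paper, as the body of the paper's own proof confirms).
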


\begin{proof}
 The minimality of $\mC_{k,q}$ trivially follows from the fact that the associated projective system is a cutting blocking set; see Theorem~\ref{thm:ConstructionGeneralSpreads}. The length of the code $\mC_{k,q}$ coincides with the cardinality of the cutting blocking set, which is $(q+1)\frac{k^2}{4}$. Therefore it remains to show that $d=q(k-1)$.
 By the correspondence between projective systems and linear codes and Definition~\ref{def:projsystem}, we have that $d=n-s=(q+1){t^2}-s$, where $k=2t$ and
 $$s:=\max\{|\bar{H}\cap \bar\mT|  \, : \,  \bar{H} \subseteq  \PG(k-1,q), \,  \dim(\bar{H})=k-2 \},$$
 where $\bar{\mT}$ is the projectivization of the set $\mT$ defined in Theorem~\ref{thm:ConstructionGeneralSpreads}. We switch to vector notation and let $\mA_{\ell,m}=\{\bar\varphi(P_\ell),\bar\varphi(P_m), \bar\varphi(Q_{\ell,m,i_{\ell,m}}), \bar\varphi(Q_{\ell,m,j_{\ell,m}}) \}$ for all $1\leq \ell < m \leq t$. Let $H$ be a hyperplane of $\Fq^{k}=\Fq^{2t}$. Define the set $H_{\mT}:=\{i : \bar\varphi(P_i) \subseteq H\}$ and the integers 
 $a:=|H_{\mT}|$ and $a_{\ell,m}:=|\{ A \in  \mA_{\ell,m} : A \subseteq H\}|$ for $1\leq \ell <m \leq t$. Moreover, let $b$ denote the number of  lines forming $\bar\mT$ that are fully contained in the projectivization $\bar{H}$ of $H$. Since each of the lines forming $\bar\mT$ either intersects $\bar{H}$ in a point, or it is contained in $\bar{H}$, we have 
 \begin{equation}\label{eq:s} s=(q+1)b+t^2-b=qb+t^2.
 \end{equation}
 Therefore, finding the maximum of $s$ is the same as finding the maximum value of $b$. Now observe that $a$ cannot be equal to $t$, as otherwise $H$ would contain a basis of $\Fq^{2t}$. Moreover, we have that 
 $a_{\ell,m}\in \{0,1,4\}$. Indeed, by construction, any two subspaces in $\mA_{\ell,m}$ span the same $4$-dimensional subspace, and if $H$ contains two of them, then it contains all of them. It is readily seen that we have 
 \begin{align*}b&=a+ \sum_{\substack{\ell, m \in H_{\mT}, \\ \ell <m}}(a_{\ell,m}-2) + \sum_{\substack{\ell\in H_{\mT}, m \notin H_{\mT},\\ \ell <m}}(a_{\ell,m}-1) + \sum_{\substack{\ell\notin H_{\mT}, m \in H_{\mT},\\ \ell <m}}(a_{\ell,m}-1) + \sum_{\substack{\ell, m \notin H_{\mT},\\ \ell <m}}(a_{\ell,m}) \\
 &= a +  \sum_{\substack{\ell, m \in H_{\mT},\\ \ell <m}}2 + \sum_{\substack{\ell, m \notin H_{\mT},\\ \ell <m}}(a_{\ell,m}) \leq a+ \binom{a}{2}+\binom{t-a}{2}= a^2+\binom{t-a}{2}=:f_t(a),
 \end{align*}
 where the second equality  and the inequality both follow from the fact that   $a_{\ell,m}$ can only be equal to $0,1$ or $4$. 
 The function $f_t$ is a quadratic polynomial in $a$ with second derivative equal to $3>0$. Hence, the maximum in the interval $[0,t-1]$ is attained in one of the two interval extremes. One can see that this happens when $a=t-1$, from which $b\leq (t-1)^2$. Finally, combining this with \eqref{eq:s} we have $s=qb+t^2\leq q(t-1)^2+t^2=(q+1)t^2-q(2t-1)$ and
 $d \geq (q+1)t^2-s=q(2t-1)=q(k-1)$.
 
 On the other hand, we can take any hyperplane $H'$ containing $\bar\varphi(P_i)$, for each $i\in [t-1]$. The projectivization of such a hyperplane contains exactly $b=(t-1)^2$ lines forming $\bar\mT$, and therefore 
 $n-|\bar{H'}\cap \bar\mT|=q(k-1)$.
\end{proof}

\begin{example}
 Let $k=6$ and take the cutting blocking set  obtained in Example~\ref{exa:cutting_from_spread}. This is a cutting blocking set arising from Construction~\ref{construction_even}.
 When $q=2$, we take $\gamma$ to be a root of $x^2+x+1$ and obtain a minimal $[27,6]_2$ code $\mC_{6,2}$ whose generator matrix is
 $$\left(\begin{array}{ccccccccccccccccccccccccccc}
 1 & 1 & 0 & 0 & 0 & 0 & 0 & 0 & 0 & 1 & 1 & 0 & 1 & 1 & 0 & 0 & 0 & 0 & 1 & 0 & 1 & 1 & 0 & 1 & 0 & 0 & 0  \\
 0 & 1 & 1 & 0 & 0 & 0 & 0 & 0 & 0 & 0 & 1 & 1 & 0 & 1 & 1 & 0 & 0 & 0 & 0 & 1 & 1 & 0 & 1 & 1 & 0 & 0 & 0   \\
 0 & 0 & 0 & 1 & 1 & 0 & 0 & 0 & 0 & 1 & 1 & 0 & 0 & 0 & 0 & 1 & 1 & 0 & 0 & 1 & 1 & 0 & 0 & 0 & 1 & 0 & 1 \\
 0 & 0 & 0 & 0 & 1 & 1 & 0 & 0 & 0 & 0 & 1 & 1 & 0 & 0 & 0 & 0 & 1 & 1 & 1 & 1 & 0 & 0 & 0 & 0 & 0 & 1 & 1  \\
 0 & 0 & 0 & 0 & 0 & 0 & 1 & 1 & 0 & 0 & 0 & 0 & 1 & 1 & 0 & 1 & 1 & 0 & 0 & 0 & 0 & 0 & 1 & 1 & 0 & 1 & 1 \\
 0 & 0 & 0 & 0 & 0 & 0 & 0 & 1 & 1 & 0 & 0 & 0 & 0 & 1 & 1 & 0 & 1 & 1 & 0 & 0 & 0 & 1 & 1 & 0 & 1 & 1 & 0  
 \end{array} \right).$$
\end{example}

Our second construction combines Theorem~\ref{thm:ConstructionGeneralSpreads} with the concept of a Baer subplane. 

\begin{construction}\label{construction_baer} Assume that $k=3t$ for some $t \in \N$ and that $q$ is a square. We first use the construction from Theorem~\ref{thm:ConstructionGeneralSpreads} by selecting $t^2=\frac{k^2}{9}$ disjoint planes from a $2$-spread. Then
we choose two disjoint Baer subplanes in each of these planes.  The union of the selected $2t^2$ Baer subplanes is a cutting blocking set in $\PG(k-1,q)$, and we denote the corresponding code by~$\mD_{k,q}$.
\end{construction}

\begin{proposition}\label{prop:minimalBaerCode}
 The  code $\mD_{k,q}$ of Construction~\ref{construction_baer} is a minimal $[(q+\sqrt{q}+1)\frac{2k^2}{9},k, d]_q$ code, where $d\geq q\big(\frac{4}{3}k-2\big)$.
\end{proposition}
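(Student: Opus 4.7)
The plan is to follow the blueprint of Proposition~\ref{prop:minimalLinesCode}, adapting it to the new geometric setting where lines of the spread are replaced by planes and single points of a line are replaced by Baer subplanes.

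\textbf{Minimality and length.} First, by Theorem~\ref{thm:ConstructionGeneralSpreads} the union of the $t^2$ chosen spread planes is already a cutting blocking set in $\PG(k-1,q)$. Within each such plane $\pi \cong \PG(2,q)$, a pair of disjoint Baer subplanes is a $2$-fold blocking set (since any line of $\pi$ meets a Baer subplane in $1$ or $\sqrt{q}+1$ points), hence a cutting blocking set by Subsection~\ref{sec:mincodescut}. Proposition~\ref{prop:inductiveConstruction} then yields that $\bar\mT$ itself is a cutting blocking set in $\PG(k-1,q)$. The length equals $|\bar\mT| = 2t^2(q+\sqrt{q}+1) = \frac{2k^2}{9}(q+\sqrt{q}+1)$.

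\textbf{Minimum distance.} As in the proof of Proposition~\ref{prop:minimalLinesCode}, I would write $d = n - s$ with $s = \max\{|\bar H \cap \bar\mT| : \bar H \text{ a hyperplane of } \PG(k-1,q)\}$. Fix $\bar H$ and let $b$ denote the number of the $t^2$ spread planes fully contained in $\bar H$. Any remaining spread plane $\pi$ meets $\bar H$ in a line, which intersects each Baer subplane of $\pi$ in at most $\sqrt{q}+1$ points, so
\[
|\bar H \cap \bar\mT| \leq 2b(q+\sqrt{q}+1) + 2(t^2 - b)(\sqrt{q}+1) = 2bq + 2t^2(\sqrt{q}+1).
\]
It then suffices to bound $b \leq (t-1)^2$. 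I would show this by reproducing the combinatorial counting of Proposition~\ref{prop:minimalLinesCode}: for each pair $1 \leq \ell < m \leq t$, the four $3$-flats in $\mA_{\ell,m}$ all lie in the same $6$-dimensional space $\bar\varphi(P_\ell) + \bar\varphi(P_m)$ and pairwise intersect trivially (being members of a spread), so any two of them span this whole $6$-dimensional space; consequently a hyperplane $\bar H$ either contains all four or at most one, giving $a_{\ell,m} \in \{0,1,4\}$. Exactly as before, with $a = |H_\mT| \leq t-1$, this yields $b \leq a^2 + \binom{t-a}{2} \leq (t-1)^2$. Plugging this into the bound for $|\bar H \cap \bar\mT|$ gives
\[
n - s \geq 2t^2(q+\sqrt{q}+1) - 2(t-1)^2 q - 2t^2(\sqrt{q}+1) = 2q(2t-1) = q\!\left(\tfrac{4k}{3}-2\right),
\]
which is the desired lower bound on $d$.

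\textbf{Main obstacle.} The core combinatorial counting is inherited \emph{mutatis mutandis} from the line case, so the real work is confined to two auxiliary points: the classical intersection estimate between a line of $\PG(2,q)$ and a Baer subplane, and the verification that pairs of distinct $3$-flats in $\mA_{\ell,m}$ still jointly span the common $6$-dimensional space they lie in. Both are short dimension arguments. The only nontrivial geometric prerequisite is the existence of two disjoint Baer subplanes inside each spread plane, which is guaranteed whenever $q$ is a square, as in the construction's hypothesis.
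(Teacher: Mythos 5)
Your argument is correct and mirrors the paper's proof: you express $d=n-s$, bound $|\bar H\cap\bar\mT|$ by splitting off the $b$ spread planes contained in $\bar H$ and using the fact that a line meets a Baer subplane in at most $\sqrt{q}+1$ points, and then bound $b\le(t-1)^2$ by the same $a_{\ell,m}\in\{0,1,4\}$ counting as in Proposition~\ref{prop:minimalLinesCode} (the paper only asserts this transfer; you supply the correct justification, that the spread members are pairwise disjoint so any two span the common $2r$-dimensional block). The only cosmetic slip is terminology: the members of $\mA_{\ell,m}$ are $2$-flats (planes) in $\PG(k-1,q)$, i.e.\ $3$-dimensional subspaces in vector notation, not ``$3$-flats.''
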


\begin{proof}
 The minimality of $\mD_{k,q}$ trivially follows from the fact that the associated projective system~$\mB$ is a cutting blocking set (Theorem~\ref{thm:ConstructionGeneralSpreads} and Proposition~\ref{prop:inductiveConstruction}). The length of the code~$\mD_{k,q}$ coincides with the cardinality of the cutting blocking set, which is $(q+\sqrt{q}+1)\frac{2k^2}{9}$. We only need to prove that $d\geq q\big(\frac{4}{3}k-2\big)$. We let $k=3t$ and proceed as before, finding an upper bound on  $$s:=\max\{|\bar{H}\cap \mB|  \, : \,    \bar{H} \subseteq  \PG(k-1,q), \, \dim(\bar{H})=k-2 \}.$$
 Observe that $\mB$ is obtained by first forming the cutting blocking set $\bar\mT$ as in Theorem~\ref{thm:ConstructionGeneralSpreads}, which is the union of $t^2$ planes $\Lambda_1,\ldots, \Lambda_{t^2}$, and then selecting two disjoint Baer subplanes $\mB_{i,1}, \mB_{i,2}$ in each~$\Lambda_i$. 
 Let $\bar{H}$ be a hyperplane in $\PG(k-1,q)$ and let $b$ denote the number of planes $\Lambda_i$ that are fully contained in $\bar{H}$.  With this notation, we have
 \begin{align}\label{eq:inequaity_baer_code} |\mB\cap \bar{H}| &= 2b(q+\sqrt{q}+1)+ \sum_{i \, : \, \Lambda_i \not\subseteq \bar{H}} |\mB_{i,1}\cap \bar{H}|+|\mB_{i,2}\cap \bar{H}| \nonumber \\ 
 &\leq 2(q+\sqrt{q}+1)b+ 2(\sqrt{q}+1)(t^2-b),\end{align}
 where the last inequality follows from the fact that a hyperplane $\bar{H}$ meets a  Baer subplane in either $1$, $\sqrt{q}+1$ or $q+\sqrt{q}+1$ points.
 Moreover, arguing as in the proof of Proposition~\ref{prop:minimalLinesCode}, one proves that $b\leq (t-1)^2$. Combining this with \eqref{eq:inequaity_baer_code} we obtain that 
 $s\leq 2(q+\sqrt{q}+1)t^2-2q(2t-1)$ and finally $d=n-s\geq q\big(\frac{4}{3}k-2\big)$.
\end{proof}

We conclude with a remark that summarizes the code lengths obtained from the constructions and results of this section. 

\begin{remark}
 For every positive integer $k$ and every prime power $q$, we have provided explicit constructions of minimal $[n_{k,q},k]_q$ codes with
 $$ n_{k,q} = \begin{cases}\frac{1}{4}(q+1)k^2 & \mbox{ if } k \equiv 0 \mod 2,  \\
 \frac{2}{9}(q+\sqrt{q}+1)k^2 & \mbox{ if } k \equiv 0 \mod 3 \mbox{ and } q \mbox{ is a square, } \\
  \frac{2}{9}(q+\sqrt{q}+1)(k-1)^2+(q-1)(k-1)+1 & \mbox{ if } k \equiv 1 \mod 3 \mbox{ and } q \mbox{ is a square, } \\
 \frac{1}{4}(q+1)(k+1)^2-(2k+q-2) & \mbox{ otherwise. }
 \end{cases} $$
 The first length is given by Construction~\ref{construction_even}, the second length is given by Construction~\ref{construction_baer}, and the last two lengths are obtained by combining  Proposition~\ref{prop:inductivek+1} with these two constructions. It is easy to see that the minimum distance $d$ of any code obtained using  Proposition~\ref{prop:inductivek+1} meets the bound of Theorem~\ref{thm:lower_bound_old_conjecture} with equality, i.e., $d=(q-1)(k-1)+1$.  
\end{remark}

\bigskip

\bigskip

\bibliographystyle{abbrv}
\bibliography{biblio.bib}

\end{document}